\UseRawInputEncoding  
\documentclass[twoside, 14pt]{article}
\usepackage{amsmath,amssymb,amsthm,mathrsfs,dsfont}
\usepackage[margin=2.0cm]{geometry} 
\usepackage{lipsum}
\usepackage{titlesec,hyperref}
\usepackage{fancyhdr}
\usepackage{color}

\pagestyle{fancy}
\fancyhf{}
\fancyhead[CO]{\footnotesize\it
Optimal decay of full CNS equations with potential force}
\fancyhead[CE]{\footnotesize\it J.C.Gao, M.L.Li, Z.A.Yao}
\fancyfoot[CE,CO]{\footnotesize\rm\thepage}
\fancypagestyle{plain}
{
\fancyhf{}

}

\linespread{1.1}

\titleformat{\subsection}{\it}{\thesubsection.\enspace}{1.5pt}{}
\titleformat{\subsubsection}{\it}{\thesubsubsection.\enspace}{1.5pt}{}

\newtheorem{theo}{Theorem}[section]
\newtheorem{lemm}[theo]{Lemma}

\newtheorem{prop}[theo]{Proposition}
\newtheorem{rema}[theo]{Remark}
\numberwithin{equation}{section}
\def\bel{\begin{equation}\label}

\def\eeq{\end{equation}}

\newcommand{\na}{{\nabla}}
\newcommand{\pa}{{\partial}}

\newcommand{\beq}{\begin{equation}}
\newcommand{\beno}{\begin{equation*}}
\newcommand{\eeno}{\end{equation*}}

\let\pa=\partial

\let\d=\delta

\let\lam=\lambda

\let\f=\frac

\let\tri=\triangle
\let\ep=\epsilon
\def\na{\nabla}
\def\th{\theta}
\def\dive{\mathop{\rm div}\nolimits}

\begin{document}
\title{Optimal decay of full compressible Navier-Stokes equations with potential force \hspace{-4mm}}
\author{Jincheng Gao$^\dag$ \quad Minling Li$^\ddag$ \quad Zheng-an Yao$^\sharp$ \\[10pt]
\small {School of Mathematics, Sun Yat-Sen University,}\\
\small {510275, Guangzhou, P. R. China}\\[5pt]
}

\footnotetext{Email: \it $^\dag$gaojch5@mail.sysu.edu.cn,
\it $^\ddag$limling3@mail2.sysu.edu.cn,
\it $^\sharp$mcsyao@mail.sysu.edu.cn}
\date{}

\maketitle

\begin{abstract}
In this paper, we aim to investigate the optimal decay rate for the higher order spatial derivative of global solution to the full compressible Navier-Stokes (CNS) equations with potential force in $\mathbb{R}^3$.
We establish the optimal decay rate of the solution itself and its spatial derivatives (including the highest order spatial derivative) for global small solution of the full CNS equations with potential force.
With the presence of potential force in the considered full CNS equations, the difficulty in the analysis comes from the appearance of non-trivial ststionary solutions.
These decay rates are really optimal in the sense that it coincides with the rate of the solution of the linerized system.
In addition, the proof is accomplished by virtue of time weighted energy estimate, spectral analysis, and high-low frequency decomposition.

\vspace*{5pt}
\noindent{\it {\rm Keywords}}:
Full compressible Navier-Stokes equations;
potential force;
optimal decay rate.

\vspace*{5pt}
\noindent{\it {\rm 2020 Mathematics Subject Classification:}}\ {\rm 35Q30, 35Q35, 35B40}
\end{abstract}


\section{Introduction}
It is well-known that full compressible Navier-Stokes (CNS) equations can be used to describe the motion of compressible viscous and heat-conductive fluids.
In this paper, we are concerned with the optimal decay rate
of global small solution to the Cauchy problem for the following full CNS equations with external force in $\mathbb{R}^3$:
\beq\label{ns3}
\left\{\begin{array}{lr}
	\rho_t +\dive(\rho u)=0,\quad (t,x)\in \mathbb{R}_{+}\times \mathbb{R}^3,\\
	\rho(u_t+u\cdot \na u)+ \na p(\rho,\th)-\mu\tri u-(\mu+\lam)\na\dive u +\rho\na \phi= 0,\quad (t,x)\in \mathbb{R}_{+}\times \mathbb{R}^3,\\
	c_{\nu}	\rho(\th_t+u\cdot \na \th)+ \th p_{\th}(\rho,\th)\dive u-\kappa\tri\th-\Psi(u)= 0,\quad (t,x)\in \mathbb{R}_{+}\times \mathbb{R}^3,
\end{array}\right.
\eeq
where $\rho$, $u$, $\th$ and $p$ represent the density, velocity,  temperature and pressure, respectively.
And $-\na\phi$ is the time independent potential force. The constant viscosity coefficients $\mu$ and $\lambda$ satisfy the following physical conditions:
\[\mu>0,\quad 2\mu+3\lam\geq0.\]
And $\kappa>0$ is the coefficient of heat conduction, $c_{\nu}>0$ is the specific heat at constant volume.
The initial data
\beq\label{initial-boundary}
(\rho,u,\th)|_{t=0}=(\rho_0,u_0,\th_0)(x)\rightarrow (\rho_{\infty},0,\th_\infty),\quad \text{as}~~|x|\rightarrow \infty,
\eeq
where $\rho_{\infty}$ and $\th_{\infty}$ are two positive constants.
The pressure $p(\rho,\th)$ here is assumed to be a smooth function in a neighborhood of $(\rho_\infty,\th_\infty)$ satisfying $p_\rho(\rho,\th)>0$ and $p_\th(\rho,\th)>0$. And $\Psi(u)$ is the classical dissipation function satisfying
$$\Psi(u)=\f{\mu}{2}(\na u+\na^T u)^2+\lam(\dive u)^2.$$

In this paper, we will investigate the optimal convergence rates in time to the stationary solution of the Cauchy problem \eqref{ns3}-\eqref{initial-boundary}.
It is noted that the large time behavior of the solution is an important topic in the research of the fluid dynamics for achieving the goal of the computation, one may refer to \cite{ene1996,goubet1996}.
And the stationary solution $(\rho^*(x),u^*(x),\th^*(x))$ for
the full CNS equations \eqref{ns3} is given by $(\rho^*(x),0,\th_\infty)$ satisfying
\beq\label{p}
\int_{\rho_{\infty}}^{\rho^*(x)}\f{p_{\rho}(s,\theta_\infty)}{s}ds+\phi(x)=0.
\eeq
The derivation for the stationary solution was given by Matsumura and Nishida in \cite{mat1983}, so we omit here.

We will give an overview some known results on the mathematical analysis on existence, stability, large time behavior, and convergence rates of solutions to the CNS equations.

\textbf{Some Results without External Force.}
There are huge literatures on the well-posedness and large time behavior of solutions to the CNS equations without external force.
It is well-known that the local existence and uniqueness of classical solutions were obtained in \cite{{Serrin-1959},{Nash-1962}} in the absence of vacuum.
For the case that the initial density may vanish in open sets, the similar results may refer to \cite{{Cho-Choe-Kim-2004},{Choe-Kim-2003},{Cho-Kim-2006},{Li-Liang-2014},{Salvi-Straskraba-1993}}.
The first global well-posedness result goes back to Matsumura and Nishida \cite{Matsumura-Nishida-1980}. 
It is noted that this famed result requires that the initial data is closed to a non-vacuum equilibrium in some Sobolev space $H^s$.
In other words, the solution is a small oscillation around a uniform non-vacuum state, which guarantees that the density is strictly away from vacuum.
In the framework of general data, this is a challenging problem, due to the difficulty in the analysis comes from the possible appearance of vacuum.
It is indicated in \cite{Li2019,rozanova2008blow,Xin1998,Xin2013} that the strong (or smooth) solution for the CNS equations will blow up in finite time.
Then some blow-up criteria of strong solutions were given in \cite{{Huang2011blowcriter1},{Sun-Wang-Zhang-2011},{Huang2011blowcriter2},{Wen2013}} and the references therein.
When the vaccum is taken into account, the global existence and uniqueness of strong solution for the full CNS equations in $\mathbb{R}^3$ was achieved by Huang et al.\cite{Huang-Li2018} for small initial energy.
Similar result was obtained for CNS equations in \cite{Huang-Li-Xin-2012,Li2020,Li-Xin-2019-pde,Wen2017}.
It is worth noting that all results on the global dynamics about the stability and large time behavior are restricted to the regime that the solutions are close to the equilibrium.
For large data, it is well-known that Feireisl constructed the so-called variational solutions for specific pressure laws excluding the perfect gas in \cite{Feireisl2004}, when the temperature equation is satisfied only as an inequality.
Then for a special form of the viscosity coefficients depending on the density, Bresch and Desjardins \cite{Bresch2007} obtained the existence of global weak solutions by making use of a new entropy inequality (BD-entropy), which was proposed in \cite{Bresch2004}, and the construction scheme of approximate solutions in \cite{Bresch2006}.
With the aid of BD-entropy, there are  Some other related results with respect to the global well-posedness theory of weak solution, one may refer to \cite{GuoZH2008,Li-Xin-2015,Vasseur2016}.
Recently, He et al. \cite{he-huang-wang} investigated global stability of large solution and established the decay rate for the global solution as it tends to the constant equilibrium state under the assumption that $\sup_{t\in \mathbb R^+}\|\rho(t, \cdot)\|_{C^\alpha}\le  M$ for some $0<\alpha<1$.
Later, the optimal decay rate for this class of global large solution itself and its derivatives was investigated, one may refer to \cite{{gao-wei-yao-D},{gao-wei-yao-NA},{gao-wei-yao-pre}}.

In the past and recent years, important progress has been made in the large time behavior of the solutions to the CNS system in the near equilibrium regime.
The decay result was first achieved by Matsumura and Nishida in \cite{nishida2} for the optimal $L^2$ decay rate, and later by Ponce in \cite{ponce} for the optimal $L^p(p\ge 2)$ decay rate.
Hoff \cite{hoff-zumbrun}, Liu and Wang \cite{liu-wang} obtained the optimal $L^p$ ($1\le p\le\infty$) decay rates in $\mathbb R^n(n\ge 2)$ by virtue of the good properties of Green function with the small initial perturbation, which bounded in $H^s\cap L^1$ with the integer $s\ge[n/2]+3$.
Furthermore, developed by Schonbek \cite{Schonbek1985}, Gao et al. \cite{gao2016} established optimal decay rate for the higher-order spatial derivative of global small solution by using the Fourier splitting method.
The approach to proving all these decay results mentioned above relies heavily on the analysis of the linearization of the system.
From another point of view, under the assumption that the initial perturbation is bounded in $\dot H^{-s}(s\in [0, \frac32))$, Guo and Wang \cite{guo2012} applied pure energy method to build the optimal decay rate of the solution and its spatial derivatives of the CNS system under the $H^N(N\ge 3)-$framework.
However, the decay rate for the highest order spatial derivative of global solution obtained in articles mentioned above is still not optimal.
Recently, this tricky problem is addressed simultaneously in a series of articles \cite{{chen2021},{Wang2020},{wu2020}} by using the spectrum analysis of the linearized system,
and \cite{gao2021} by combining the energy estimates with the interpolation between negative and positive Sobolev spaces.

\textbf{Some Results with Potential Force.}
When there is an external potential force, there are also some results on the convergence rate for solutions to the CNS equations.
For potential force, one has to face a tricky problem of the appearance of non-trivial stationary solutions.
However, some seminal results on the existence and large behavior theory were still achieved.
The global solutions was first obtained by Matsumura and Nishida in \cite{mat1983} as initial data is closed to the steady state $(\rho^*(x),0,\theta_\infty)$ in the Sobolev space $H^3$.
In addition, they also proved that the global solution converges to the stationary state as time tends to infinity.
The background profile is non-trivial due to the effect of the external force, thus, unlike the problems without potential force, the analysis on the convergence rates is more delicate and difficult.
The first work concerning the explicit decay estimate for solution was done by Deckelnick in \cite{Deckelnick1992}.
More precisely, he considered the isentropic case and showed that
\beno
\sup_{x\in\mathbb{R}^3}|(\rho(t,x)-\rho^*(x),u(t,x))|\lesssim(1+t)^{-\f14}.
\eeno
This result was then improved by Shibata and Tanaka in \cite{Shibata2003,Shibata2007} for more general external forces to $(1+t)^{-\f12+\kappa}$ for any small positive constant $\kappa$ when the initial perturbation belongs to $H^3\cap L^{\f65}$, and later by Duan et al. in \cite{duan2007} for $L^p-L^q$ convergence rates when the initial perturbation is also bounded in $L^p$ with $1\leq p<\f65$.
However, in \cite{duan2007}, the decay estimates of the higher order spatial derivatives of the solution were obtained the same as that of the first order one.
Recently, Gao et al. \cite{gao2021} improved this result under $H^N-$framework ($N\geq 3$). Specifically, they established the optimal decay rate of $k-$th $(k=0,1,\cdots,N)$ order spatial derivative (including the highest order spatial derivative) of the solution.

Most of the above results are for isentropic fluids without taking the effect of heat-conductivity into account.
In many physical situations, the heat-conductivity is an important driving force to motions of fluids.
When the heat-conductivity is taken into account for compressible flows, Duan et al. \cite{duan-ukai-yang-zhao2007} obtained the optimal decay rates in $H^3-$framework for system \eqref{ns3}-\eqref{initial-boundary} when the initial perturbation is also bounded in $L^1$.
To be more specific, they established the following decay estimates:
\beq\label{highdecayL1}
\begin{split}
	\|(\rho-\rho^*,u,\theta-\theta_\infty)(t)\|_{L^p}\lesssim (1+t)^{-\frac32(1-\f1p)},\quad 2\leq p\leq6,\quad
	\|\na(\rho-\rho^*,u,\theta-\theta_\infty)(t)\|_{H^2}
	\lesssim(1+t)^{-\f54}.
\end{split}
\eeq
For the case that the initial perturbation belongs to $H^2\cap L^q (1\leq q\leq 2)$, Wang \cite{Wang2017} established the following optimal time decay rates for all $t\geq0$,
\beq\label{highdecayH2}
\begin{split}
	\|(\rho-\rho^*,u,\theta-\theta_\infty)(t)\|_{L^p}\lesssim (1+t)^{-\frac32(\f1q-\f1p)},~ 2\leq p\leq6,~
	\|\na(\rho-\rho^*,u,\theta-\theta_\infty)(t)\|_{H^1}\lesssim (1+t)^{-\frac32(\f1q-\f12)-\f12}.
\end{split}
\eeq
There are some results concerning the decay estimate for the CNS equations with potential force, as observed in \cite{{Ukai2006},{duan2007},{Okita2014},{Li2011},{Matsumura1992},{Matsumura2001}}.
Obviously, the decay rates of the higher order spatial derivatives in either \eqref{highdecayL1} or \eqref{highdecayH2} are still not optimal.
Thus it is of interest to investigate the optimal decay rate for the higher order derivative of global solutions to system \eqref{ns3}-\eqref{initial-boundary} in three dimensions.
\textbf{\textit{Based on the decay result \eqref{highdecayL1}, our main purpose in this paper is to establish the optimal decay rate for the $k-th$ $(k=2,3)$ order spatial derivative of the solution to the full CNS equations with potential force.}}

Now, the optimal convergence rates for solutions and its spatial derivatives of Cauchy problem \eqref{ns3}-\eqref{initial-boundary} in $L^2-$norm can be obtained and stated as follows:

\begin{theo}\label{them3}
	Let $(\rho^*(x),0,\theta_\infty)$ be the stationary solution of initial value problem \eqref{ns3}--\eqref{initial-boundary}, if $(\rho_0-\rho^*,u_0,\theta_0-\theta_\infty)\in H^3$, there exists a constant $\delta$ such that the potential function $\phi(x)$ satisfies
	\beq\label{phik}
	\sum_{k=0}^{4}\|(1+|x|)^{k}\na^k\phi\|_{L^2\cap L^\infty}\leq \delta,
	\eeq
	and the initial perturbation statisfies
	\beq\label{initial-H2}
	\|(\rho_0-\rho^*,u_0,\th_0-\th_\infty)\|_{H^{3}}\leq \delta.
	\eeq
	Then there exists a unique global solution $(\rho,u,\th)$ of initial value problem \eqref{ns3}--\eqref{initial-boundary} satisfying for any $t\geq0$,
\beq\label{energy-thm}
\begin{split}
\|(\rho-\rho^*,u,\th-\th_\infty)(t)\|_{H^3}^2
+\int_0^t\big(\|\nabla(\rho-\rho^*)\|_{H^{2}}^2+\|\nabla u\|_{H^3}^2+\|\nabla \th\|_{H^3}^2\big)ds
\leq C\|(\rho_0-\rho^*,u_0,\th_0-\th_\infty)\|_{H^3}^2,
\end{split}
\eeq
where $C$ is a positive constant independent of time $t$.
If further
\beno
\|(\rho_0-\rho^*,u_0,\th_0-\th_\infty)\|_{L^1}<\infty,
\eeno
then there exist constants $\delta_0>0$ and $\bar C_0>0$ such that for any $0<\delta\leq\delta_0$, we have
\beq\label{kdecay}
\|\na^k(\rho-\rho^*,u,\th-\th_\infty)(t)\|_{L^2}
\leq \bar C_0(1+t)^{-\f34-\f k2},\quad\text{for}~~k=0,1,2,3.
\eeq
\end{theo}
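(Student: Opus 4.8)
The plan is to pass to the perturbation variables $\sigma=\rho-\rho^*$, $u$, and $\vartheta=\theta-\theta_\infty$, and to recast \eqref{ns3} as a quasilinear system
$\partial_t(\sigma,u,\vartheta)^{T}+\mathcal{L}(\sigma,u,\vartheta)^{T}=\mathcal{N}+\mathcal{G}$,
where $\mathcal{L}$ is the constant-coefficient linearization about $(\rho_\infty,0,\theta_\infty)$, where $\mathcal{N}$ collects the genuine quadratic nonlinearities (the transport terms $u\cdot\nabla u$, $u\cdot\nabla\vartheta$, the pressure and dissipation nonlinearities, and $\Psi(u)$), and where $\mathcal{G}$ collects the contributions produced by the non-trivial stationary profile, namely the terms carrying a factor $\rho^*-\rho_\infty$, $\nabla\rho^*$ or $\nabla\phi$. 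The key algebraic point is the stationary relation obtained from \eqref{p}, i.e. $p_\rho(\rho^*,\theta_\infty)\nabla\rho^*=-\rho^*\nabla\phi$, which lets one cancel the leading force contributions and exhibit every term of $\mathcal{G}$ as $O(\delta)$-small and spatially localized through the weighted hypothesis \eqref{phik}.

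First I would establish \eqref{energy-thm}. For each $0\le k\le 3$ I form the energy identity obtained by applying $\nabla^k$ and pairing with $\nabla^k(\sigma,u,\vartheta)$; the viscous and heat-conduction terms produce the dissipation $\|\nabla^{k+1}u\|^2+\|\nabla^{k+1}\vartheta\|^2$, but no density dissipation. To recover the latter I add, with a small weight, the cross term $\frac{d}{dt}\int\nabla^k u\cdot\nabla^{k+1}\sigma\,dx$, which — after inserting the momentum and continuity equations — yields $c\|\nabla^{k+1}\sigma\|^2$ at the cost of terms controlled by $\|\nabla^{k+2}u\|^2$; this is admissible precisely for $k\le 2$, so the density dissipation reaches $\|\nabla^3\sigma\|^2$. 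Summing in $k$ and choosing the cross-term coefficients small produces a Lyapunov functional equivalent to $\|(\sigma,u,\vartheta)\|_{H^3}^2$ with dissipation $\|\nabla\sigma\|_{H^2}^2+\|\nabla u\|_{H^3}^2+\|\nabla\vartheta\|_{H^3}^2$. The quadratic terms $\mathcal{N}$ are absorbed by Sobolev embedding and the smallness of the solution, while the force terms $\mathcal{G}$ are absorbed using small $\delta$ together with a Hardy inequality applied to \eqref{phik}; a standard local existence result plus this a priori bound closes global existence by continuity.

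The heart of the matter is \eqref{kdecay}, which I would prove by induction on $k$. The cases $k=0,1$ are already contained in \eqref{highdecayL1}, and in particular give the base estimate $\|(\sigma,u,\vartheta)(t)\|_{L^2}^2\lesssim(1+t)^{-3/2}$. To upgrade $k=2$ and then $k=3$ to the optimal rate I would run a time-weighted (Fourier-splitting) argument on the top-level energy inequality. Splitting frequency space into $\{|\xi|\le R(t)\}$ and its complement with $R(t)^2=m/(1+t)$, the high-frequency part of $\|\nabla^k(\cdot)\|^2$ for any component whose dissipation sits one order higher is bounded by $R^{-2}\|\nabla^{k+1}(\cdot)\|^2$ and absorbed into the dissipation, while the low-frequency part is bounded by $R^{2k}\|(\sigma,u,\vartheta)\|_{L^2}^2\lesssim R^{2k}(1+t)^{-3/2}$. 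This turns the energy inequality into $\frac{d}{dt}\mathcal{E}_k+cR^2\mathcal{E}_k\lesssim(1+t)^{-k-5/2}+(\text{nonlinear/force})$, and integrating against the factor $(1+t)^{cm}$ with $m$ large gives $\mathcal{E}_k\lesssim(1+t)^{-k-3/2}$, i.e. the claimed $(1+t)^{-3/4-k/2}$. The nonlinear and force contributions are shown to decay at least this fast using the lower-order rates already in hand and the smallness parameters.

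The hard part will be the top-order density $\|\nabla^3\sigma\|$. Since density carries no dissipation of its own and the cross-term construction reaches only $\|\nabla^3\sigma\|^2$ (not $\|\nabla^4\sigma\|^2$, which would require the unavailable $\|\nabla^5 u\|^2$), the Fourier splitting above cannot be applied to the density at level $k=3$. The resolution I would use is to note that in the level-$3$ energy inequality the density dissipation $\|\nabla^3\sigma\|^2$ appears at the \emph{same} order as the density energy $\|\nabla^3\sigma\|^2$, so it acts as a genuine damping rather than a higher-order gain; combined with the $R^2$-damping produced by Fourier splitting for $(u,\vartheta)$, one still reaches $\frac{d}{dt}\mathcal{E}_3+cR^2\mathcal{E}_3\lesssim(1+t)^{-11/2}+\cdots$ and hence $\|\nabla^3\sigma\|\lesssim(1+t)^{-9/4}$. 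The pervasive secondary difficulty is the variable-coefficient force terms $\mathcal{G}$: keeping them strictly subordinate in every energy and every decay estimate is exactly what forces the weighted smallness \eqref{phik} and the restriction to small $\delta$, and verifying that their $\nabla^k$-derivatives retain both the spatial localization and the right temporal decay is where most of the technical care must be spent.
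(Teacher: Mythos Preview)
Your overall plan for the energy estimate \eqref{energy-thm} and for the decay rates $k=0,1$ matches the paper. For $k=2$ you propose Schonbek-type Fourier splitting with a time-dependent radius $R(t)^2=m/(1+t)$, whereas the paper uses a pure time-weighted argument (multiply the $\mathcal{E}^3_l$-inequality by $(1+t)^{5/2+\epsilon_0}$, use the known $k=1$ rate, then repeat with $(1+t)^{7/2+\epsilon_0}$); both routes close, so this is only a stylistic difference.

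The gap is at $k=3$. Your proposed resolution---that the cross-term dissipation $\|\nabla^3\sigma\|^2$ ``acts as a genuine damping'' on $\mathcal{E}_3$, so that together with $R^2$-damping for $(u,\vartheta)$ one reaches $\frac{d}{dt}\mathcal{E}_3+cR^2\mathcal{E}_3\lesssim(1+t)^{-11/2}$---does not close, for two linked reasons. First, the cross term $\int\nabla^2 u\cdot\nabla^3\sigma\,dx$ that produces $\|\nabla^3\sigma\|^2$ also produces, via $\partial_t\nabla^3\sigma=-\gamma\nabla^3\dive u+\cdots$, a right-hand term of size $\|\nabla^3 u\|^2$; at pure level $3$ your only velocity dissipation is $\|\nabla^4 u\|^2$, and with $R(t)\to 0$ the high-frequency bound $\|\nabla^3 u^h\|^2\le R^{-2}\|\nabla^4 u\|^2$ blows up, so this term cannot be absorbed. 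Second, the same cross term contains $\nabla^2 u$, so the modified energy $\mathcal{E}_3=\|\nabla^3(\sigma,u,\vartheta)\|^2+\eta_1\int\nabla^2 u\cdot\nabla^3\sigma$ is \emph{not} equivalent to $\|\nabla^3(\sigma,u,\vartheta)\|^2$ alone: extracting $\|\nabla^3\|^2$ at the end costs an additive $\|\nabla^2 u\|^2\sim(1+t)^{-7/2}$, which caps the rate at $(1+t)^{-7/4}$ rather than $(1+t)^{-9/4}$. Including the level-$2$ energy to absorb $\|\nabla^3 u\|^2$ runs into the same cap.

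The paper's fix is precisely to avoid this contamination: instead of the full cross term, it uses only the \emph{high-frequency} part $\int_{|\xi|\ge\eta}\widehat{\nabla^2 u}\cdot\overline{\widehat{\nabla^3\sigma}}\,d\xi$ with a \emph{fixed} threshold $\eta$. On $\{|\xi|\ge\eta\}$ one has $|\widehat{\nabla^2 u}|\le\eta^{-1}|\widehat{\nabla^3 u}|$, so this modified energy is genuinely equivalent to $\|\nabla^3(\sigma,u,\vartheta)\|^2$, and the resulting density dissipation is $\|\nabla^3\sigma^h\|^2$. The missing low-frequency piece $\|\nabla^3(\sigma^l,u^l,\vartheta^l)\|$ is then estimated by Duhamel's formula and the semigroup bounds for the linearized system, yielding
\[
\|\nabla^3(\sigma^l,u^l,\vartheta^l)\|_{L^2}\lesssim\delta\sup_{0\le s\le t}\|\nabla^3(\sigma,u,\vartheta)(s)\|_{L^2}+(1+t)^{-9/4};
\]
the $\delta$-term is absorbed by smallness, and a Gronwall argument on the now genuinely level-$3$ energy closes at $(1+t)^{-9/4}$. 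This fixed-threshold high--low decomposition combined with spectral analysis of the linear semigroup is the key mechanism your proposal is missing.
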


\begin{rema}
The global well-posedness theory of the full CNS equations with potential force in three-dimensional whole space was studied in \cite{duan-ukai-yang-zhao2007} under the $H^3-$ framework.
Furthermore, they also established the decay estimate \eqref{highdecayL1} if the initial data also belong to $L^1$.
Thus, the advantage of the decay rate \eqref{kdecay} in Theorem \ref{them3} is that the decay rate of the global solution $(\rho-\rho^*,u,\theta-\theta_\infty)$ itself and
its any order spatial derivative is optimal in the sense that it coincides with the rate of the solution of the linerized system.
\end{rema}

\begin{rema}
By using the Sobolev interpolation inequality, we can establish the following estimate:
\beno
\|\na^k(\rho-\rho^*,u,\th-\th_\infty)(t)\|_{L^p}
\leq \bar C_0(1+t)^{-\f32-\f k2+\f{3}{2p}},
\eeno
for all $2\leq p<+\infty$ and $k=0,1,2,3$. If $p=+\infty$, then $k=0,1$.
Therefore, the global solution $(\rho,u,\theta)$ of Cauchy problem \eqref{ns3}-\eqref{initial-boundary} tends to the constant equilibrium
state $(\rho^*,0,\th_\infty)$ in $L^\infty-$norm at the
$(1+t)^{-\f32}-$rate.
At the same time, we point out that the time derivative of density, velocity and temperature $(\rho_t,u_t,\th_t)$ tends to $(0,0,0)$ in $L^2-$norm at the $(1+t)^{-\f54}-$rate.
\end{rema}

\begin{rema}
    If the initial data $(\rho_0-\rho^*,u_0,\theta_0-\theta_\infty)\in H^N\cap L^1 (N\geq3)$, we also can get the similar decay result that
    \beq\label{kdecay2}
    \|\na^k(\rho-\rho^*,u,\th-\th_\infty)(t)\|_{L^2}
    \leq \bar C_0(1+t)^{-\f34-\f k2},\quad\text{for}~~k=0,\cdots,
    N.
    \eeq
    These decay rates of the solution itself and its spatial derivatives are optimal in the sense that it coincides with the rate of the solution of the heat equation.
    The decay estimate \eqref{kdecay2} was proven in this paper for $N=3$ (see Theorem \ref{them3}), however, the case $N>3$ can be handled in the same way and so we omit the proof.
\end{rema}

\textbf{To end this section, we would like to introduce our strategies for deriving the optimal time-decay rates for the full CNS equations with poteneial force.}
We can only obtain the lower dissipation estimate about the density, which is essentially caused by the degenerate dissipative structure of the system \eqref{ns3}-\eqref{initial-boundary} satisfying hyperbolic-parabolic coupling system.
Therefore, we will focus on how to obtain the energy estimates which include only the highest-order spatial derivative of the solution.
We point out that the equilibrium state of global solution will depend on the spatial variable caused by potential force.
This will also create some fundamental and additional difficulties in the process of the energy estimates, see Lemmas \ref{enn-1}, \ref{enn} and \ref{ennjc}.
We can derive in a similar way as \eqref{highdecayL1} in \cite{duan-ukai-yang-zhao2007}, by combining the energy estimate and the decay rate of linearized system, one can obtain the following decay estimates:
\begin{equation}\label{basic-decay-d}
\|\nabla^k (\rho-\rho^*,u,\theta-\theta_\infty)(t)\|_{H^{3-k}}
\lesssim (1+t)^{-(\frac34+\frac{k}{2})},\quad k=0,1,
\end{equation}
if the initial data $(\rho_0-\rho^*, u_0,\theta_0-\theta_\infty)$ belongs to $H^3 \cap L^1$.
We then prove that the decay estimate \eqref{basic-decay-d} for $k=2$ by using the basic decay estimate \eqref{basic-decay-d}.
Motivated by \cite{gao2021}, one can apply the time weighted method and the basic decay estimate \eqref{basic-decay-d} to achieve this goal.
Indeed, by virtue of the classical energy estimate, we can establish following estimate:
\beq\label{E32}
\begin{split}
	&(1+t)^{\f72+\ep_0}\mathcal{E}^{3}_{2}(t)
	+\int_0^t(1+\tau)^{\f72+\ep_0}\big(\|\na^{3}(\rho-\rho^*)\|_{L^{2}}^2
	+\|\na^{3}(u,\theta-\theta_\infty)\|_{H^{1}}^2\big)d\tau\\
	\lesssim&\|\na^{2}(\rho_0-\rho^*,u_0,\theta_0-\theta_\infty)\|_{H^{1}}^2
	+\int_0^t(1+\tau)^{\f52+\ep_0}\|\na^{2}(\rho-\rho^*, u,\theta-\theta_\infty)\|_{H^{1}}^2d\tau.
\end{split}
\eeq
where $\mathcal{E}^{3}_{2}(t)$ is equivalent to $\|\na^2(\rho-\rho^*, u,\theta-\theta_\infty)\|_{H^1}$. Thus, we need to control the second term on the right handside of \eqref{E32}.
With the help of the decay estimate for $k=1$ in \eqref{basic-decay-d}, we can obtain that\beq\label{E31}
\begin{split}
	(1+t)^{\f52+\ep_0} \mathcal{E}^3_1(t)	
	+\int_0^t(1+\tau)^{\f52+\ep_0}\big(\|\na^{2}(\rho-\rho^*)\|_{H^{1}}^2
	+\|\na^{2}(u,\theta-\theta_\infty)\|_{H^{2}}^2\big)d\tau\lesssim (1+t)^{\ep_0},
\end{split}
\eeq
where $\mathcal{E}^{3}_{1}(t)$ is equivalent to $\|\na(\rho-\rho^*, u,\theta-\theta_\infty)\|_{H^2}$. 
The combination of \eqref{E32} and decay estimate \eqref{E31} gives the decay estimate \eqref{basic-decay-d} for $k=2$ directly.

Due to the presence of potential force term $\rho \nabla \phi$, we can not apply the time weighted method mentioned above to build the optimal decay rate for the third order spatial derivative of global solution directly.
In order to overcome this difficult, motivated by \cite{wu2020}, we establish some energy estimate for the quantity
$\int_{|\xi|\geq\eta}\widehat{\na^{2}u}\cdot \overline{\widehat{\na^{3}(\rho-\rho^*)}}d\xi$,
namely the higher frequency part, rather than $\int \nabla^{2} u\cdot\nabla^{3}(\rho-\rho^*) dx$.
Here $\widehat{\na^{2}u}$ and $\widehat{\na^{3}(\rho-\rho^*)}$ stand for the Fourier part of
$\na^{2}u$ and $\na^{3}(\rho-\rho^*)$, respectively.
We point out that the advantage is that the quantity $\|\na^{3}(\rho-\rho^*, u,\theta-\theta_\infty)\|_{L^2}^2-\eta_2\int_{|\xi|\geq\eta}\widehat{\na^{2}u}\cdot \overline{\widehat{\na^{3}(\rho-\rho^*)}}d\xi$
is equivalent to $\|\na^{3}(\rho-\rho^*, u,\theta-\theta_\infty)\|_{L^2}^2$ for some small constant $\eta_2$.
Hence, by virtue of some energy estimate and decay estimate, one can obtain the following inequality:
\beq\label{highesthigh}
\begin{split}	&\f{d}{dt}\Big\{\|\na^{3}(\rho-\rho^*, u,\theta-\theta_\infty)\|_{L^2}^2-\eta_2\int_{|\xi|\geq\eta}\widehat{\na^{2}u}\cdot \overline{\widehat{\na^{3}(\rho-\rho^*)}}d\xi \Big\}\\
	&\quad+\|\na^{3}( u^h,(\theta-\theta_\infty)^h)\|_{L^2}^2+\eta_2\|\na^{3}(\rho-\rho^*)^h\|_{L^2}^2\\
	\lesssim&  \|\na^{3}((\rho-\rho^*)^l, u^l,(\theta-\theta_\infty)^l)\|_{L^2}^2+(1+t)^{-6}.
\end{split}
\eeq
Thus, one has to estimate the decay rate of the low-frequency term
$\|\na^{3}((\rho-\rho^*)^l, u^l,(\theta-\theta_\infty)^l)\|_{L^2}^2$.
Indeed, the combination of Duhamel's principle and decay estimate of $k-th$ $(0\leq k\leq 3)$ order spatial derivative of solution obtained above help us to build that\beno
\|\na^3((\rho-\rho^*)^l, u^l,(\theta-\theta_\infty)^l)\|_{L^2}\lesssim\delta\sup_{0\leq\tau\leq t}\|\na^3 (\rho-\rho^*, u,\theta-\theta_\infty)\|_{L^2}+(1+t)^{-\f94}.
\eeno
This, together with \eqref{highesthigh}, by using the smallness of $\d$, we can obtain the optimal decay rate for the third order spatial derivative of global solution to the full CNS equations with potential force.

\textbf{Notation:} Throughout this paper, for $1\leq p\leq +\infty$ and $s\in\mathbb{R}$, we simply denote $L^p(\mathbb{R}^3)$ and $H^s(\mathbb{R}^3)$ by $L^p$  and $H^s$, respectively.
And the constant $C$ denotes a general constant which may vary in different estimates.
$A\lesssim(\gtrsim) B$ stands for $A\leq(\geq) CB$ for some constant
$C>0$ independent of $A$ and $B$.
$A\sim B$ stands for $B\lesssim A\lesssim B$.
$\widehat{f}(\xi)=\mathcal F(f(x))$ represents the usual Fourier transform of the function $f(x)$ with respect to $x\in\mathbb{R}^3$. $\mathcal F^{-1}(\widehat{f}(\xi))$ means the inverse Fourier transform of $\widehat{f}(\xi)$ with respect to $\xi\in\mathbb{R}^3$. For the sake of simplicity, we write $\int f dx:=\int _{\mathbb{R}^3} f dx$.

The rest of the paper is organized as follows. In Section \ref{pre}, we introduce some important lemmas and basic fact, which will be useful in later analysis. Finally, the proof of Theorem \ref{them3} is given in Section \ref{rhox}.

\section{Preliminary}\label{pre}
In this section, we recall some elementary inequalities, which will be of frequency use in next section.
First of all, in order to deal with the terms about $\bar\rho(x)$ in the CNS equations with a potential force in energy estimate, we need the following Hardy inequality.
\begin{lemm}[Hardy inequality]\label{hardy}
	For $k\geq1$, suppose that $\f{\na\phi}{(1+|x|)^{k-1}}\in L^2$, then $\f{\phi}{(1+|x|)^{k}}\in L^2$, with the estimate
	\beno
	\begin{split}
		\|\f{\phi}{(1+|x|)^{k}}\|_{L^2}\leq C\|\f{\na\phi}{(1+|x|)^{k-1}}\|_{L^2}.
	\end{split}
	\eeno
\end{lemm}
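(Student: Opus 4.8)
The plan is to prove the equivalent squared estimate
\beno
\int_{\R^3}\frac{\phi^2}{(1+|x|)^{2k}}\,dx\le C\int_{\R^3}\frac{|\na\phi|^2}{(1+|x|)^{2k-2}}\,dx,
\eeno
from which the stated bound follows by taking square roots. I would derive this by an integration-by-parts (vector field) argument that reduces everything to a one-dimensional radial computation. Writing $r=|x|$ and $\partial_r\phi=\frac{x}{|x|}\cdot\na\phi$, the idea is to introduce a radial field $V(x)=\frac{x}{r}\,g(r)$ whose divergence reproduces the target weight, so that the left-hand side can be rewritten in terms of $\na\phi$ after transferring the derivative off $\phi^2$.

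Concretely, I would take the scalar profile $g$ to solve the first-order ODE $g'(r)+\frac{2}{r}g(r)=(1+r)^{-2k}$, which by the integrating factor $r^2$ gives $g(r)=r^{-2}\int_0^r\frac{s^2}{(1+s)^{2k}}\,ds\ge0$, a profile that is smooth and bounded near the origin with $g(r)\sim r/3$. With this choice $\dive V=(1+r)^{-2k}$, and integration by parts (discarding the boundary term at infinity by the decay of $\phi$) yields
\beno
\int_{\R^3}\frac{\phi^2}{(1+|x|)^{2k}}\,dx=\int_{\R^3}\phi^2\,\dive V\,dx=-2\int_{\R^3}\phi\,g(r)\,\partial_r\phi\,dx.
\eeno
Splitting the integrand as $\frac{\phi}{(1+r)^k}\cdot\big[(1+r)^k g(r)\,\partial_r\phi\big]$ and applying Cauchy--Schwarz gives
\beno
\int_{\R^3}\frac{\phi^2}{(1+|x|)^{2k}}\,dx\le 2\Big(\int_{\R^3}\frac{\phi^2}{(1+|x|)^{2k}}\,dx\Big)^{\frac12}\Big(\int_{\R^3}(1+r)^{2k}g(r)^2(\partial_r\phi)^2\,dx\Big)^{\frac12}.
\eeno
Dividing out the common factor and using $|\partial_r\phi|\le|\na\phi|$, the argument then closes once one has the pointwise bound $(1+r)^{2k}g(r)^2\le C\,(1+r)^{2-2k}$, that is $0\le g(r)\le C\,(1+r)^{1-2k}$.

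The main obstacle, as I see it, is precisely this pointwise control of $g$, which is a genuine far-field issue. For $k=1$ one checks directly that $\int_0^r s^2(1+s)^{-2}\,ds\sim r$, hence $g(r)\lesssim(1+r)^{-1}$ and the estimate runs without difficulty; in this case it is simply the classical Hardy inequality, which can alternatively be read off from $\frac{1}{1+|x|}\le\frac{1}{|x|}$ together with $\int|\phi|^2/|x|^2\le 4\int|\na\phi|^2$. For larger $k$ the integral $\int_0^r s^2(1+s)^{-2k}\,ds$ saturates as $r\to\infty$, so the required decay of $g$ does not come out of the naive construction; here one must split into the regions $|x|\le1$ and $|x|\ge1$ and treat the outer region by integrating the defining relation from infinity and invoking the decay of $\phi$ (equivalently, the hypothesis placing $\phi$ itself in a weighted $L^2$ space), and one must simultaneously verify that the boundary contribution $\lim_{R\to\infty}R^2 g(R)\,\phi^2$ vanishes. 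Controlling this endpoint behavior is the delicate step on which the whole estimate hinges; once the weighted bound on $g$ and the vanishing of the boundary term are secured, the Cauchy--Schwarz absorption above immediately produces the claimed inequality.
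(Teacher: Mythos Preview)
The paper does not give a proof of this lemma; it simply says ``the proof is simple and we omit it here.'' So there is nothing in the paper to compare your argument against, and your proposal has to be judged on its own merits.

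Your argument is complete and correct for $k=1$: the profile $g(r)=r^{-2}\int_0^r s^2(1+s)^{-2}\,ds$ indeed satisfies $0\le g(r)\le C(1+r)^{-1}$ globally, the boundary term vanishes for $\phi\in C_c^\infty$, and the Cauchy--Schwarz absorption closes. This is exactly the classical Hardy inequality in disguise.

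For $k\ge 2$ there is a genuine gap. You correctly diagnose that your $g$ only decays like $r^{-2}$ at infinity, so the pointwise bound $g(r)\le C(1+r)^{1-2k}$ fails. But the fix you sketch is not carried through, and two of the remarks are problematic. First, the phrase ``equivalently, the hypothesis placing $\phi$ itself in a weighted $L^2$ space'' is circular: the weighted $L^2$ bound on $\phi$ is the conclusion, not a hypothesis; the only assumption is on $\nabla\phi/(1+|x|)^{k-1}$. Second, integrating the ODE from infinity gives $\tilde g(r)=-r^{-2}\int_r^\infty s^2(1+s)^{-2k}\,ds$, which does obey $|\tilde g(r)|\le C(1+r)^{1-2k}$ for $r\ge 1$, but now $\tilde g(r)\sim -A_k r^{-2}$ as $r\to 0^+$, so the integration by parts and the Cauchy--Schwarz splitting both break down near the origin. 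Patching the two profiles across $\{|x|=1\}$ is possible in principle, but it produces a surface term $\int_{|x|=1}\phi^2\,dS$ whose control by $\|\nabla\phi/(1+|x|)^{k-1}\|_{L^2}$ alone you do not explain, and for $k\ge 2$ this is not automatic.

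If you want to salvage the vector-field method, one workable route is to use the pure-power weight $|x|^{-2k}x$ on all of $\R^3$: $\mathop{\rm div}\nolimits(|x|^{-2k}x)=(3-2k)|x|^{-2k}$, and for $\phi\in C_c^\infty(\R^3)$ the identity and Cauchy--Schwarz give $\int|x|^{-2k}\phi^2\,dx\le \frac{4}{(2k-3)^2}\int|x|^{2-2k}|\nabla\phi|^2\,dx$; then use $(1+|x|)^{-2k}\le |x|^{-2k}$ on the left and split the right-hand side into $\{|x|\le 1\}$ (where $|x|^{2-2k}$ is singular but can be traded against the extra decay of $(1+|x|)^{-2k}$ on the left via a separate near-origin estimate) and $\{|x|\ge 1\}$ (where $|x|^{2-2k}\le C(1+|x|)^{2-2k}$). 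Making this precise still requires care with the inner region, which is exactly the step your proposal leaves open.
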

The proof of Lemma \ref{hardy} is simply and we omit it here. We then introduce the following Sobolev interpolation of Gagliardo-Nirenberg inequality, which will be used extensively in energy estimates. The proof and more details may refer to \cite{guo2012}.
\begin{lemm}[Sobolev interpolation inequality]\label{inter}
	Let $2\leq p\leq +\infty$ and $0\leq l,k\leq m$. If $p=+\infty$, we require furthermore that $l\leq k+1$ and $m\geq k+2$. Then if $\na^l\phi\in L^2$ and $\na^m \phi\in L^2$, we have $\na^k\phi\in L^p$. Moreover, there exists a positive constant $C$ depending only on $k,l,m,p$ such that
	\begin{equation}\label{Sobolev}
	\|\na^k\phi\|_{L^p}\leq C\|\na^l\phi\|_{L^2}^{\theta}\|\na^m\phi\|_{L^2}^{1-\theta},
	\end{equation}
	where $0\leq\theta\leq1$ satisfying
	\beno
	\f k3-\f1p=\Big(\f l3-\f12\Big)\theta+\Big(\f m3-\f12\Big)(1-\theta).
	\eeno
\end{lemm}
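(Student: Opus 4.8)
The plan is to pass to the Fourier side and prove \eqref{Sobolev} separately on the three regimes $p=2$, $2<p<\infty$, and $p=\infty$, the unifying device being an elementary H\"older interpolation between the homogeneous $L^2$-based norms $\||\xi|^\sigma\widehat\phi\|_{L^2}$. Since $\widehat{\na^j\phi}(\xi)=(i\xi)^{\otimes j}\widehat\phi(\xi)$, one has $|\widehat{\na^j\phi}(\xi)|\sim|\xi|^j|\widehat\phi(\xi)|$, so by Plancherel each norm $\|\na^j\phi\|_{L^2}$ equals, up to a constant, $\||\xi|^j\widehat\phi\|_{L^2}$. The core estimate is then: if $\sigma=\theta l+(1-\theta)m$ with $\theta\in[0,1]$, then
\beq\label{core-interp}
\||\xi|^\sigma\widehat\phi\|_{L^2}\leq \||\xi|^l\widehat\phi\|_{L^2}^\theta\,\||\xi|^m\widehat\phi\|_{L^2}^{1-\theta},
\eeq
which follows from the pointwise factorization $|\xi|^{2\sigma}|\widehat\phi|^2=(|\xi|^{2l}|\widehat\phi|^2)^\theta(|\xi|^{2m}|\widehat\phi|^2)^{1-\theta}$ together with H\"older's inequality with conjugate exponents $1/\theta$ and $1/(1-\theta)$.

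For $p=2$ the scaling identity reduces to $k=\theta l+(1-\theta)m$, so \eqref{Sobolev} is exactly \eqref{core-interp} with $\sigma=k$ and the result is immediate. For $2<p<\infty$ I would first record the homogeneous Sobolev embedding $\dot H^s(\R^3)\hookrightarrow L^p$ with $s=3(\f12-\f1p)\in(0,\f32)$, namely $\|g\|_{L^p}\lesssim \||\xi|^s\widehat g\|_{L^2}$; this is the Hardy--Littlewood--Sobolev inequality for the Riesz potential and is the one genuinely non-elementary ingredient. Applying it to $g=\na^k\phi$ gives $\|\na^k\phi\|_{L^p}\lesssim \||\xi|^{k+s}\widehat\phi\|_{L^2}$, after which \eqref{core-interp} with $\sigma=k+s$ finishes the bound, provided $k+s=\theta l+(1-\theta)m$. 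A direct computation shows this is equivalent to the stated relation $\f k3-\f1p=(\f l3-\f12)\theta+(\f m3-\f12)(1-\theta)$, and the hypothesis $\theta\in[0,1]$ guarantees $l\leq k+s\leq m$ by convexity, so \eqref{core-interp} indeed applies.

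The endpoint $p=\infty$ is where the extra hypotheses $l\leq k+1$ and $m\geq k+2$ enter. Starting from the inversion bound $\|\na^k\phi\|_{L^\infty}\leq C\int_{\R^3}|\xi|^k|\widehat\phi(\xi)|\,d\xi$, I would split the integral at a radius $R>0$ and apply Cauchy--Schwarz with the weights $|\xi|^l$ on $\{|\xi|\leq R\}$ and $|\xi|^m$ on $\{|\xi|>R\}$:
\begin{align*}
\int_{|\xi|\leq R}|\xi|^k|\widehat\phi|\,d\xi&\leq\Big(\int_{|\xi|\leq R}|\xi|^{2(k-l)}\,d\xi\Big)^{\f12}\|\na^l\phi\|_{L^2},\\
\int_{|\xi|> R}|\xi|^k|\widehat\phi|\,d\xi&\leq\Big(\int_{|\xi|> R}|\xi|^{2(k-m)}\,d\xi\Big)^{\f12}\|\na^m\phi\|_{L^2}.
\end{align*}
The radial integrals converge precisely because $2(k-l)+3>0$ (ensured by $l\leq k+1$) and $2(k-m)+3<0$ (ensured by $m\geq k+2$), giving $\|\na^k\phi\|_{L^\infty}\lesssim R^{(k-l)+3/2}\|\na^l\phi\|_{L^2}+R^{(k-m)+3/2}\|\na^m\phi\|_{L^2}$. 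Balancing the two terms, i.e. choosing $R=(\|\na^m\phi\|_{L^2}/\|\na^l\phi\|_{L^2})^{1/(m-l)}$, yields the exponents $\theta=(m-k-\f32)/(m-l)$ and $1-\theta=(k-l+\f32)/(m-l)$, which one checks coincide with the scaling relation at $p=\infty$, producing exactly \eqref{Sobolev}.

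Finally, all computations are carried out for Schwartz $\phi$; the qualitative claim $\na^k\phi\in L^p$ and the inequality for general $\phi$ with $\na^l\phi,\na^m\phi\in L^2$ then follow by a standard density/mollification argument, since the right-hand side of \eqref{Sobolev} is finite. The main obstacle is the intermediate range $2<p<\infty$: it cannot be recovered by interpolating the $p=2$ and $p=\infty$ endpoints, because the $L^\infty$ estimate is unavailable without the conditions $l\le k+1$ and $m\ge k+2$, so one genuinely needs the fractional embedding $\dot H^s\hookrightarrow L^p$ while everything else is elementary Fourier-side manipulation. Should one prefer to avoid invoking Hardy--Littlewood--Sobolev, the same range can be handled by a Littlewood--Paley argument, using Bernstein's inequality $\|\na^k\Delta_j\phi\|_{L^p}\lesssim 2^{j(k+s)}\|\Delta_j\phi\|_{L^2}$ together with the square-function characterization of $\|\cdot\|_{L^p}$ for $p\ge2$, which again reduces matters to \eqref{core-interp}.
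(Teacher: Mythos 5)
Your proof is correct, and it follows essentially the canonical route: the paper itself gives no proof of Lemma \ref{inter}, deferring to \cite{guo2012}, where the same Fourier-side scheme is used (Plancherel plus H\"older on $\||\xi|^{\sigma}\widehat\phi\|_{L^2}$ for $p=2$, the homogeneous embedding $\dot H^{s}\hookrightarrow L^p$ with $s=3(\frac12-\frac1p)$ for $2<p<\infty$, and the low--high frequency splitting with an optimized radius $R$ for $p=\infty$). Your exponent bookkeeping checks out: $k+s=\theta l+(1-\theta)m$ is indeed equivalent to the stated scaling relation, and at $p=\infty$ the hypotheses $l\le k+1$ and $m\ge k+2$ guarantee both the convergence of the two radial integrals ($2(k-l)+3>0$, $2(k-m)+3<0$) and $\theta=\frac{m-k-3/2}{m-l}\in(0,1)$, so the balancing step is legitimate.
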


Then we introduce the following commutator estimate, which will be useful in following energy estimates and can be found in \cite{majda2002} for more details.
\begin{lemm}\label{commutator}
	Let $k\geq1$ be an integer and define the commutator\beno
       [\na^k,f]g \overset{def}{=} \na^k(fg)-f\na^kg.
    \eeno
    Then we have
    \beno
    \|[\na^k,f]g\|_{L^2}\leq C\|\na f\|_{L^\infty}\|\na^{k-1}g\|_{L^2}+C\|\na^k f\|_{L^2}\|g\|_{L^\infty},
    \eeno
    where $C$ is a positive constant dependent only on $k$.
\end{lemm}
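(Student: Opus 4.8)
The plan is to reduce the estimate to a finite sum of products $\na^j f\,\na^{k-j}g$ via the Leibniz rule, and then to control each product by pairing Hölder's inequality with a Gagliardo--Nirenberg interpolation whose exponents are tuned so that exactly the two claimed terms on the right-hand side emerge after a Young-type recombination. First I would expand the commutator by the general Leibniz rule: writing $\na^k$ with multi-indices,
\[
\na^k(fg)=\sum_{0\le|\alpha|\le k}\binom{k}{\alpha}\na^\alpha f\,\na^{k-\alpha}g,
\]
and subtracting the single term $f\,\na^k g$ corresponding to $\alpha=0$ leaves
\[
[\na^k,f]g=\sum_{1\le|\alpha|\le k}\binom{k}{\alpha}\na^\alpha f\,\na^{k-\alpha}g.
\]
The essential structural gain is that every surviving term carries at least one derivative on $f$ and therefore \emph{at most} $k-1$ derivatives on $g$; this is precisely what makes $\|\na^{k-1}g\|_{L^2}$ (rather than $\|\na^k g\|_{L^2}$) sufficient on the right. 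Grouping the multi-index terms by order, it then suffices to bound $\|\na^j f\,\na^{k-j}g\|_{L^2}$ for each integer $1\le j\le k$, with a constant depending only on $k$.

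For fixed $j$ I would split by Hölder, $\|\na^j f\,\na^{k-j}g\|_{L^2}\le\|\na^j f\|_{L^p}\|\na^{k-j}g\|_{L^q}$ with $\tfrac1p+\tfrac1q=\tfrac12$, and then interpolate each factor by the general Gagliardo--Nirenberg inequality with an $L^\infty$ endpoint (a mild extension of Lemma \ref{inter}, as in \cite{majda2002}). Interpolating $\na^j f$ between $\na f\in L^\infty$ and $\na^k f\in L^2$ forces $a=\tfrac{j-1}{k-1}$ and $\tfrac1p=\tfrac{j-1}{2(k-1)}$, while interpolating $\na^{k-j}g$ between $g\in L^\infty$ and $\na^{k-1}g\in L^2$ forces $b=\tfrac{k-j}{k-1}$ and $\tfrac1q=\tfrac{k-j}{2(k-1)}$. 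The key arithmetic is that these two choices are compatible, namely $\tfrac1p+\tfrac1q=\tfrac12$ and, crucially, $a+b=1$; consequently
\[
\|\na^j f\,\na^{k-j}g\|_{L^2}\le C\big(\|\na f\|_{L^\infty}\|\na^{k-1}g\|_{L^2}\big)^{b}\big(\|\na^k f\|_{L^2}\|g\|_{L^\infty}\big)^{a}.
\]

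Since $a+b=1$ with $a,b\ge0$, the weighted arithmetic--geometric inequality $X^bY^a\le bX+aY\le X+Y$ converts this product of powers into the sum $\|\na f\|_{L^\infty}\|\na^{k-1}g\|_{L^2}+\|\na^k f\|_{L^2}\|g\|_{L^\infty}$, and summing the finitely many terms $1\le j\le k$ gives the claim with $C=C(k)$. The endpoints are consistent and degenerate cleanly: $j=1$ gives $a=0$, so one uses $\|\na f\|_{L^\infty}$ and $\|\na^{k-1}g\|_{L^2}$ directly and recovers the first term; $j=k$ gives $b=0$ and recovers the second term directly; the base case $k=1$ is immediate since $[\na,f]g=(\na f)g$. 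The main obstacle --- and in fact the whole content of the estimate --- is the exponent bookkeeping of the middle step: one must verify that a single Hölder split admits interpolation exponents landing exactly on the prescribed norms $\|\na f\|_{L^\infty},\|\na^k f\|_{L^2}$ for $f$ and $\|g\|_{L^\infty},\|\na^{k-1}g\|_{L^2}$ for $g$, and that the resulting weights satisfy $a+b=1$ so the two contributions recombine into a clean sum. This compatibility is special to the commutator structure and is exactly what the Gagliardo--Nirenberg scaling relation delivers.
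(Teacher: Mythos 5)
Your proof is correct: the exponent bookkeeping checks out (with $a=\tfrac{j-1}{k-1}$, $b=\tfrac{k-j}{k-1}$ the Gagliardo--Nirenberg scaling relation indeed forces $\tfrac1p+\tfrac1q=\tfrac12$ and $a+b=1$, the two interpolations used are just the standard Moser-type inequalities $\|\na^{j-1}(\na f)\|_{L^{2(k-1)/(j-1)}}\leq C\|\na f\|_{L^\infty}^{1-a}\|\na^{k}f\|_{L^2}^{a}$ and $\|\na^{k-j}g\|_{L^{2(k-1)/(k-j)}}\leq C\|g\|_{L^\infty}^{1-b}\|\na^{k-1}g\|_{L^2}^{b}$, and the endpoint cases $j=1$, $j=k$, $k=1$ degenerate exactly as you say). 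The paper does not prove Lemma \ref{commutator} but simply cites \cite{majda2002}, and your argument --- Leibniz expansion, H\"older split, Gagliardo--Nirenberg with complementary weights, then weighted arithmetic--geometric recombination --- is precisely the standard proof given in that reference, so your proposal matches the intended proof.
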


Finally, we end up this section with the following lemma. The proof and more details may refer to \cite{chen2021}.
\begin{lemm}\label{tt2}
Let $r_1,r_2>0$ be two real numbers, for any $0<\ep_0<1$, we have
\beno
\begin{split}
	\int_0^{\f t2}(1+t-\tau)^{-r_1}(1+\tau)^{-r_2}d\tau\leq C& \left\{\begin{array}{l}
		(1+t)^{-r_1},\quad \text{for}~~ r_2>1,\\
		(1+t)^{-r_1+\ep_0},\quad~~ \text{for}~~ r_2=1, \\
		(1+t)^{-(r_1+r_2-1)},\quad \text{for}~~ r_2<1,
	\end{array}\right.\\
	\int_{\f t2}^{t}(1+t-\tau)^{-r_1}(1+\tau)^{-r_2}d\tau\leq C& \left\{\begin{array}{l}
		(1+t)^{-r_2},\quad \text{for}~~ r_1>1,\\
		(1+t)^{-r_2+\ep_0},\quad~~ \text{for}~~ r_1=1, \\
		(1+t)^{-(r_1+r_2-1)},\quad \text{for}~~ r_1<1,
	\end{array}\right.
\end{split}
\eeno
where $C$ is a positive constant independent of time.
\end{lemm}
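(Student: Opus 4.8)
The plan is to prove the first convolution bound directly via a low/high split of the integrand, and then deduce the second bound for free, since the two integrals are mirror images of one another under the reflection $\tau\mapsto t-\tau$.

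First I would treat $\int_0^{t/2}(1+t-\tau)^{-r_1}(1+\tau)^{-r_2}\,d\tau$. The key elementary observation is that on $\tau\in[0,t/2]$ one has $t-\tau\geq t/2$, hence $1+t-\tau\geq\f12(1+t)$, and therefore $(1+t-\tau)^{-r_1}\leq 2^{r_1}(1+t)^{-r_1}$ because $r_1>0$. This lets me pull the slowly varying factor outside the integral, reducing the problem to the scalar quantity
\[
\int_0^{t/2}(1+t-\tau)^{-r_1}(1+\tau)^{-r_2}\,d\tau\leq 2^{r_1}(1+t)^{-r_1}\int_0^{t/2}(1+\tau)^{-r_2}\,d\tau.
\]

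It then remains to evaluate $\int_0^{t/2}(1+\tau)^{-r_2}\,d\tau$ in the three regimes. When $r_2>1$ the integral is bounded uniformly in $t$ by $\int_0^{\infty}(1+\tau)^{-r_2}\,d\tau=(r_2-1)^{-1}$, which yields the stated rate $(1+t)^{-r_1}$. When $r_2<1$ the explicit antiderivative gives $\int_0^{t/2}(1+\tau)^{-r_2}\,d\tau=(1-r_2)^{-1}\big[(1+t/2)^{1-r_2}-1\big]\leq C(1+t)^{1-r_2}$, which combines with the prefactor to produce $(1+t)^{-(r_1+r_2-1)}$. The borderline case $r_2=1$ is the only place requiring care: there $\int_0^{t/2}(1+\tau)^{-1}\,d\tau=\ln(1+t/2)$, and I would invoke the elementary inequality $\ln(1+s)\leq C_{\ep_0}(1+s)^{\ep_0}$, valid for all $s\geq0$ and any fixed $0<\ep_0<1$, which accounts for the arbitrarily small loss in the exponent $(1+t)^{-r_1+\ep_0}$. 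This logarithmic borderline is the main (indeed the only) subtlety of the argument.

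Finally, for the second integral $\int_{t/2}^{t}(1+t-\tau)^{-r_1}(1+\tau)^{-r_2}\,d\tau$ I would substitute $\sigma=t-\tau$, which carries $[t/2,t]$ onto $[0,t/2]$ and transforms the integral into $\int_0^{t/2}(1+\sigma)^{-r_1}(1+t-\sigma)^{-r_2}\,d\sigma$. This is precisely the first integral with the roles of $r_1$ and $r_2$ interchanged, so the three cases already established, now indexed according to whether $r_1>1$, $r_1=1$, or $r_1<1$, deliver exactly the claimed bounds $(1+t)^{-r_2}$, $(1+t)^{-r_2+\ep_0}$, and $(1+t)^{-(r_1+r_2-1)}$, respectively. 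No additional computation is needed beyond recording this symmetry.
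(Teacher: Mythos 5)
Your proposal is correct: splitting off the slowly varying factor via $1+t-\tau\geq\frac{1}{2}(1+t)$ on $[0,t/2]$, integrating $(1+\tau)^{-r_2}$ in the three regimes with $\ln(1+t)\leq C_{\epsilon_0}(1+t)^{\epsilon_0}$ handling the borderline case, and then obtaining the second bound from the reflection $\tau\mapsto t-\tau$ (which swaps $r_1$ and $r_2$) is exactly the standard argument. The paper itself omits the proof, deferring to the reference [chen2021], and your self-contained argument coincides with the one given there, so there is nothing to add.
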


\section{Optimal decay of the full CNS equations with potential force}\label{rhox}

In this section, we will give the proof for Theorem \ref{them3} that include the global well-posedness theory and time decay estimates.
First of all, we noted that the global small solution of the full CNS equations can be proven directly by taking the strategy of standard energy method in \cite{{duan-ukai-yang-zhao2007},{mat1983}}, when the initial data is small perturbation near the equilibrium state.
Thus, one can assume that the global solution $(\rho, u,\theta)$ in Theorem \ref{them3} exists and satisfies the energy estimate \eqref{energy-thm},i.e.,
\beq\label{energy-thm-01}
\begin{split}
\|(\rho-\rho^*,u,\theta-\theta_{\infty})\|_{H^3}^2
+\int_0^t\big(\|\nabla(\rho-\rho^*)\|_{H^{2}}^2+\|\nabla u\|_{H^3}^2+\|\nabla \theta(t)\|_{H^{3}}^2\big)ds
\leq C\|(\rho_0-\rho^*,u_0,\theta_0-\theta_\infty)\|_{H^3}^2,
\end{split}
\eeq
for all $t\geq 0$.
Secondly, similar to the decay estimate \eqref{highdecayL1}, the combination of the energy estimate and the decay rate of linearized system can help us to establish the following decay estimates:
\begin{equation}\label{basic-decay}
\|\nabla^k (\rho-\rho^*)(t)\|_{H^{3-k}}+\|\nabla^k u(t)\|_{H^{3-k}}+\|\nabla^k (\theta-\theta_\infty)(t)\|_{H^{3-k}}
\le C(1+t)^{-(\frac34+\frac{k}{2})},\quad k=0,1,
\end{equation}
if the initial data $(\rho_0-\rho^*, u_0,\theta_0-\theta_\infty)$ belongs to $L^1$ additionally.
Now, we only focus on establishing the optimal decay rate for the higher order spatial derivative of solution. 
More precisely, we would like to prove that the decay estimate
\eqref{basic-decay} for the case $k=2,3$.
Thus, we define
$$n(t,x)\overset{def}{=}\rho(t,x)-\rho^*(x),\quad
\bar\rho(x)\overset{def}{=} \rho^*(x)-\rho_{\infty}, \quad
v(t,x)\overset{def}{=}\sqrt{\f{\rho_{\infty}}{p_1}}u(t,x),\quad q(t,x)\overset{def}{=}\sqrt{\f{p_2\rho_{\infty}}{p_1p_3}}(\theta(t,x)-\theta_\infty),
$$
where \[p_1=\f{p_{\rho}(\rho_\infty,\theta_\infty)}{\rho_\infty},\quad p_2=\f{p_{\theta}(\rho_\infty,\theta_\infty)}{\rho_\infty},\quad p_3=\f{\theta_\infty p_{\theta}(\rho_\infty,\theta_\infty)}{c_\nu\rho_\infty},  \]
then \eqref{ns3}--\eqref{initial-boundary} can be rewritten in the following perturbation form
\beq\label{ns5}
\left\{\begin{array}{lr}
	n_t +\gamma\dive v=F_1,\quad (t,x)\in \mathbb{R}_{+}\times \mathbb{R}^3,\\
	v_t+\gamma\na n+\bar\lam \na q-\mu_1\tri v-\mu_2\na\dive v =F_2,\quad (t,x)\in \mathbb{R}_{+}\times \mathbb{R}^3,\\
	q_t-\bar{\kappa}\tri q+\bar\lam \dive v=F_3,\quad (t,x)\in \mathbb{R}_{+}\times \mathbb{R}^3,\\
	(n,v,q)|_{t=0}\overset{def}{=}(n_0,v_0,q_0)=(\rho_0-\rho^*,\sqrt{\f{\rho_{\infty}}{P_1}}u_0,\sqrt{\f{P_2\rho_{\infty}}{P_1P_3}}(\theta_0-\theta_\infty))\rightarrow (0,0,0),~~\text{as}~~|x|\rightarrow \infty,
\end{array}\right.
\eeq
where
$\mu_1=\f{\mu}{\rho_{\infty}}, \mu_2=\f{\mu+\lam}{\rho_{\infty}},
\gamma=\sqrt{p_1\rho_{\infty}}$, $\bar\lam=\sqrt{p_2p_3}$, $\bar\kappa=\f{\kappa}{c_{\nu}\rho_\infty}$, and
\beno
\begin{split}
	F_1&=-\f{\mu_1\gamma}{\mu}\dive [(n+\bar\rho)v],\\
	F_2&=-\f{\mu_1\gamma}{\mu}v\cdot \na v-\f{\mu}{\mu_1}f(n+\bar\rho)\big(\mu_1\tri v+\mu_2\na\dive v\big)-g(n+\bar\rho,q) \na n -h(n,\bar\rho,q) \na\bar\rho-r(n+\bar\rho,q)\na q,\\
	F_3&=-\f{\mu_1\gamma}{\mu}v\cdot \na q+\bar\kappa f(n+\bar\rho)\tri q-m(n+\bar\rho,q)\dive v+\f{\bar\kappa\mu_1^2\gamma^2}{\kappa\mu^2}\sqrt{\f{p_2}{p_3}}\Psi(v)+\f{\bar\kappa\mu_1\gamma^2}{\kappa\mu}\sqrt{\f{p_2}{p_3}}f(n+\bar\rho)\Psi(v).
\end{split}
\eeno
Here the nonlinear functions $f, g, h$ and $m$ are defined by
\beno
\begin{split}
&f(n+\bar\rho)\overset{def}{=}\f{1}{n+\bar\rho+\rho_\infty}-\f{1}{\rho_\infty},\quad g(n+\bar\rho,q)\overset{def}{=}\f{\mu}{\mu_1\gamma}\Big(\f{p_{\rho}(n+\bar\rho+\rho_\infty,\sqrt{\f{p_2\rho_{\infty}}{p_1p_3}}q+\theta_{\infty})}{n+\bar\rho+\rho_\infty}-\f{p_{\rho}(\rho_{\infty},\theta_{\infty})}{\rho_{\infty}}\Big),\\
&h(n,\bar\rho,q)\overset{def}{=}\f{\mu}{\mu_1\gamma}\Big(\f{p_{\rho}(n+\bar\rho+\rho_\infty,\sqrt{\f{p_2\rho_{\infty}}{p_1p_3}}q+\theta_{\infty})}{n+\bar\rho+\rho_\infty}-\f{p_{\rho}(\bar\rho+\rho_{\infty},\theta_{\infty})}{\bar\rho+\rho_{\infty}}\Big),\\
&r(n+\bar\rho,q)\overset{def}{=}\f{\mu}{\mu_1\gamma}\Big(\f{p_{\theta}(n+\bar\rho+\rho_\infty,\sqrt{\f{p_2\rho_{\infty}}{p_1p_3}}q+\theta_{\infty})}{n+\bar\rho+\rho_\infty}-\f{p_{\theta}(\rho_{\infty},\theta_{\infty})}{\rho_{\infty}}\Big),\\
&m(n+\bar\rho,q)\overset{def}{=}\f{1}{c_\nu}\sqrt{\f{p_2}{p_3}}\Big(\f{(\sqrt{\f{p_2\rho_{\infty}}{p_1p_3}}q+\theta_\infty)p_{\theta}(n+\bar\rho+\rho_\infty,\sqrt{\f{p_2\rho_{\infty}}{p_1p_3}}q+\theta_{\infty})}{n+\bar\rho+\rho_\infty}-\f{\theta_\infty p_{\theta}(\rho_{\infty},\theta_{\infty})}{\rho_{\infty}}\Big).
\end{split}
\eeno

\subsection{Energy estimates}
In this subsection, we would like to establish the following differential inequality, which is the key to obtain the optimal decay rate
for the higher order spatial derivative of solution.
First of all, let us define the energy $\mathcal{E}^3_l(t)$ as
$$
\mathcal{E}^3_l(t)\overset{def}{=}
\sum_{k=l}^{3}\|\na^k(n,v,q)\|_{L^2}^2
+\eta_1\sum_{k=l}^{2}\int\na^{k}v\cdot\na^{k+1}n dx, \quad 0\le l \le 3,
$$
where $\eta_1$ is a small positive constant.
The smallness of parameter $\eta_1$ lead us to obtain the following equivalent realtion
\begin{equation}\label{emleq}
c_1 \|\nabla^l(n, v,q)\|_{H^{3-l}}^2
\le \mathcal{E}^3_l(t)
\le c_2 \|\nabla^l(n, v,q)\|_{H^{3-l}}^2 .
\end{equation}
where $c_1$ and $c_2$ are positive constants independent of time.
And one can deduce from the relation \eqref{p} and the condition \eqref{phik} given in Theorem \ref{them3} that
\beq\label{condition1}
\begin{split}
\sum_{k=0}^{4}\|(1+|x|)^{k}\na^k(\rho^*-\rho_{\infty})\|_{L^2\cap L^\infty}\leq \delta.
\end{split}
\eeq
The combination of \eqref{condition1} and the Sobolev interpolation inequality help us to deduce that
\begin{equation}\label{density-control}
\sum_{k=0}^{4}\|(1+|x|)^{k}\na^k(\rho^*-\rho_{\infty})\|_{L^p}\leq \delta,
\quad 2\leq p\leq+\infty.
\end{equation}
This inequality will be used frequently in energy estimate in this section.
Now we state the main result in this subsection.

\begin{prop}\label{nenp}
Under the assumptions in Theorem \ref{them3}, for any $l=1,2$, it holds that
\beq\label{eml}
\begin{split}	\f{d}{dt}\mathcal{E}^3_l(t)+\eta_1\|\na^{l+1}n\|_{H^{2-l}}^2
+\|\na^{l+1}(v,q)\|_{H^{3-l}}^2\leq 0,
\end{split}
\eeq
where $\eta_1$ is a small positive constant.
\end{prop}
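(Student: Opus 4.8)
The plan is to derive the differential inequality \eqref{eml} by combining two families of energy identities for the perturbation system \eqref{ns5}: the \emph{natural} energy estimates, which produce dissipation only for $v$ and $q$, and the \emph{interaction} estimates for the cross terms $\int\na^{k}v\cdot\na^{k+1}n\,dx$, which are precisely what recover the missing dissipation for the density $n$. This reflects the degenerate dissipative structure stressed in the introduction: the continuity equation carries no parabolic regularization, so $\na n$ has no dissipation of its own.

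\textbf{Step 1 (natural energy estimates).} For each $k$ with $l\le k\le 3$ I apply $\na^{k}$ to the three equations of \eqref{ns5}, test respectively against $\na^{k}n$, $\na^{k}v$ and $\na^{k}q$, and add. After integration by parts the linear coupling terms $\gamma\na n$--$\gamma\dive v$ and $\bar\lam\na q$--$\bar\lam\dive v$ cancel in pairs, leaving
$$\tfrac12\tfrac{d}{dt}\|\na^{k}(n,v,q)\|_{L^2}^2+\mu_1\|\na^{k+1}v\|_{L^2}^2+\mu_2\|\na^{k}\dive v\|_{L^2}^2+\bar\kappa\|\na^{k+1}q\|_{L^2}^2=\langle\na^{k}F_1,\na^{k}n\rangle+\langle\na^{k}F_2,\na^{k}v\rangle+\langle\na^{k}F_3,\na^{k}q\rangle.$$
Summing over $k$ yields the full dissipation $\|\na^{l+1}(v,q)\|_{H^{3-l}}^2$ for velocity and temperature, but no control of $\na n$.

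\textbf{Step 2 (density dissipation via the interaction term).} For $l\le k\le 2$ I compute $\tfrac{d}{dt}\int\na^{k}v\cdot\na^{k+1}n\,dx$, substituting $v_t$ from the momentum equation and $n_t$ from the continuity equation. The decisive contribution comes from the pressure term $-\gamma\na n$ in $v_t$, which produces $-\gamma\|\na^{k+1}n\|_{L^2}^2$; this is exactly the density dissipation we need. The remaining linear terms are harmless: the piece originating from $n_t=-\gamma\dive v+\cdots$ integrates by parts to $\gamma\|\na^{k}\dive v\|_{L^2}^2$, while the viscous and $\bar\lam\na q$ contributions are bounded by $\|\na^{k+2}v\|_{L^2}\|\na^{k+1}n\|_{L^2}$ and $\|\na^{k+1}q\|_{L^2}\|\na^{k+1}n\|_{L^2}$. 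Multiplying this identity by the small constant $\eta_1$ and adding it to Step 1, Young's inequality lets me absorb these costs: a fixed fraction of $\|\na^{k+1}n\|_{L^2}^2$ is retained from the dominant $-\eta_1\gamma\|\na^{k+1}n\|_{L^2}^2$, while the $v$- and $q$-factors (of order at most $\na^{4}v$, since $k\le2$) are absorbed into the dissipation produced in Step 1 once $\eta_1$ is chosen small. This smallness is also what guarantees the equivalence \eqref{emleq} between $\mathcal{E}^3_l(t)$ and $\|\na^{l}(n,v,q)\|_{H^{3-l}}^2$.

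\textbf{Step 3 (nonlinear and profile terms).} It remains to show that all the $F_i$-contributions are absorbable into the left-hand dissipation. The quadratic convective and pressure-type terms are handled by the commutator estimate (Lemma \ref{commutator}), the Gagliardo--Nirenberg inequality (Lemma \ref{inter}), and the a priori smallness of $\|(n,v,q)\|_{H^3}$ from \eqref{energy-thm-01}: each such term carries at least one small factor and is thus dominated by $\delta$ times a dissipative norm. \textbf{The main obstacle, and the genuinely new feature compared with the force-free case, is the stationary profile $\bar\rho(x)$}, which enters $F_1$ through $\dive(\bar\rho v)$ and $F_2,F_3$ through $h(n,\bar\rho,q)\na\bar\rho$ and the $x$-dependent coefficients $f,g,r,m$. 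Since $\bar\rho$ does not decay in time, its contributions cannot be closed by time decay and must instead be controlled by the weighted smallness \eqref{density-control} together with the Hardy inequality (Lemma \ref{hardy}); the latter is needed to trade an undifferentiated factor $\bar\rho/(1+|x|)$ against $\na\bar\rho$ in the terms where $\bar\rho$ is not differentiated. Once every nonlinear term is bounded by $\delta$ (or $\|(n,v,q)\|_{H^3}$) times the dissipative norm on the left, choosing $\delta$ and $\eta_1$ sufficiently small absorbs them all and yields \eqref{eml}.
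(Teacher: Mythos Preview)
Your proposal is correct and follows essentially the same route as the paper: the natural $\na^{k}$-energy identities (the paper's Lemmas \ref{enn-1} and \ref{enn}) supply the $(v,q)$-dissipation, the interaction identity for $\int\na^{k}v\cdot\na^{k+1}n\,dx$ (Lemma \ref{ennjc}) recovers the density dissipation, and the combination with small $\eta_1$ yields \eqref{eml} after all nonlinear and profile terms are absorbed via the smallness \eqref{energy-thm-01}, \eqref{density-control} together with Lemmas \ref{hardy}--\ref{commutator}. One small wording issue: in the $\bar\rho$-terms the Hardy inequality is applied to the \emph{perturbation} (e.g.\ $\|v/(1+|x|)\|_{L^2}\lesssim\|\na v\|_{L^2}$) while the spatial weight is paired with $\na^{j}\bar\rho$ through \eqref{density-control}; it is not used to trade $\bar\rho$ against $\na\bar\rho$.
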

Recalling the energy estimate \eqref{energy-thm-01},
then there exists a positive constant $C$ such that for any $k\geq 1$,
\beno
\begin{split}
	&|f(n+\bar\rho)|\leq C|n+\bar\rho|,\quad |g(n+\bar\rho,q)|\leq C|n+\bar\rho+q|,\quad |h(n,\bar\rho,q)|\leq C|n+q|,\\
	&|r(n+\bar\rho,q)|\leq C|n+\bar\rho+q|,\quad |m(n+\bar\rho,q)|\leq C|n+\bar\rho+q|,\\
	&
	|f^{(k)}(n+\bar\rho)|\leq C,\quad |g^{(k)}(n+\bar\rho,q)|\leq C,\quad|h^{(k)}(n,\bar\rho,q)|\leq C,\quad|r^{(k)}(n+\bar\rho,q)|\leq C,\quad |m^{(k)}(n+\bar\rho,q)|\leq C.
\end{split}
\eeno
Thus, it is easy to check that\beq\label{F-defi}
\begin{split}
	F_1\sim& \dive ((n+\bar\rho)v),\\
	F_2\sim& v\cdot \na v+(n+\bar\rho)(\tri v+\na\dive v)+(n+q)\na(n+q)+\na(\bar\rho(n+q)),\\
	F_3\sim&v\cdot\na q+(n+\bar\rho)\tri q+(n+\bar\rho)\dive v+q\dive v+(n+\bar\rho)\Psi(v)+\Psi(v).
\end{split}
\eeq

Next, we will give the following three lemmas, which will play vital role to prove Proposition \ref{nenp}. The first one is Lemma \ref{enn-1} concerning the basic energy estimate for $k-th$ $(k=1,2)$ order spatial derivative of solution.

\begin{lemm}\label{enn-1}
Under the assumptions in Theorem \ref{them3}, for $k=1,2$, it holds that
\beq\label{en1}
\f{d}{dt}\|\na^k(n,v,q)\|_{L^2}^2+\|\na^{k+1}(v,q)\|_{L^2}^2
\leq C\delta \|\na^{k+1}(n,v,q)\|_{L^2}^2,
\eeq
where $C$ is a positive constant independent of time.
\end{lemm}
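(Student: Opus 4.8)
The plan is to carry out the standard higher-order energy method on the perturbed system \eqref{ns5}, being careful with the new background terms. For fixed $k\in\{1,2\}$ I would apply $\na^k$ to each of the three equations in \eqref{ns5}, pair the resulting identities in $L^2$ with $\na^k n$, $\na^k v$ and $\na^k q$ respectively, and sum. The three time-derivative contributions combine into $\f12\f{d}{dt}\|\na^k(n,v,q)\|_{L^2}^2$. The linear couplings cancel in pairs after one integration by parts: the term from $\gamma\na^k\dive v$ tested against $\na^k n$ cancels the one from $\gamma\na^{k+1}n$ tested against $\na^k v$, and likewise the two $\bar\lam$ terms cancel. The dissipative operators yield, after integration by parts, $\mu_1\|\na^{k+1}v\|_{L^2}^2+\mu_2\|\na^k\dive v\|_{L^2}^2+\bar\kappa\|\na^{k+1}q\|_{L^2}^2$, a nonnegative combination that controls $\|\na^{k+1}(v,q)\|_{L^2}^2$ from below since $\mu_1,\bar\kappa>0$. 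Note that the system is only partially dissipative, so there is no $\|\na^{k+1}n\|_{L^2}^2$ on the left; this is exactly why such a term is permitted on the right of \eqref{en1} and must be recovered separately (through the cross term $\eta_1\int\na^k v\cdot\na^{k+1}n\,dx$ used in Proposition \ref{nenp}). The whole problem therefore reduces to showing that the nonlinear terms $\int\na^k F_i$, tested against the corresponding unknown, are all bounded by $C\delta\|\na^{k+1}(n,v,q)\|_{L^2}^2$.

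For the genuinely quadratic part of the nonlinearities, namely the terms in \eqref{F-defi} that are products of $n,v,q$ and their derivatives, such as $v\cdot\na v$, $n\,(\tri v+\na\dive v)$, $(n+q)\na(n+q)$ and $q\,\dive v$, the strategy is to first integrate by parts so that the factor carrying the most derivatives is paired against $\na^{k+1}$ of an unknown, then distribute $\na^k$ by the Leibniz rule and isolate the top-order piece using the commutator estimate of Lemma \ref{commutator}. The remaining low-order factors are placed in $L^\infty$ (or an intermediate $L^p$) through the Gagliardo--Nirenberg interpolation of Lemma \ref{inter}, and the uniform bound \eqref{energy-thm-01} furnishes the smallness: each such low-order factor contributes a power of $\delta$, so every product is dominated in terms of the top-order norm after Cauchy--Schwarz and Young's inequality, with the dissipative part $\|\na^{k+1}(v,q)\|_{L^2}^2$ absorbed into the left-hand side when the constant is small.

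The delicate part, and the one I expect to be the main obstacle, is the family of terms carrying the spatially dependent background profile $\bar\rho=\rho^*-\rho_\infty$: namely $\dive(\bar\rho v)$ in $F_1$, $\bar\rho(\tri v+\na\dive v)$ and $\na(\bar\rho(n+q))$ in $F_2$, and the $\bar\rho$-weighted terms in $F_3$. Unlike the perturbation, $\bar\rho$ carries no time decay, so its smallness must be extracted entirely from the weighted bound \eqref{density-control}, in which each derivative of $\bar\rho$ comes with a gain of one power of $(1+|x|)^{-1}$. The plan is, after integrating by parts to place a full gradient on an unknown, to expand $\na^k(\bar\rho\,\cdot)$ by Leibniz and bound every factor $\na^j\bar\rho$ by $\delta(1+|x|)^{-j}$ via \eqref{density-control}; the resulting spatial weight $(1+|x|)^{-j}$ is then converted into $j$ derivatives falling on the unknown through iterated application of the Hardy inequality of Lemma \ref{hardy}, so that each background contribution is again tied to the top-order dissipation and the factor $\delta$. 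Collecting the dissipation on the left and all the nonlinear and background bounds on the right then yields \eqref{en1}.
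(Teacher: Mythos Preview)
Your proposal is correct and follows essentially the same approach as the paper: the paper likewise derives the energy identity \eqref{ehk}, integrates the nonlinear terms by parts once so that they are measured against $\|\na^{k+1}(\cdot)\|_{L^2}$, and then controls each piece of $F_1,F_2,F_3$ by a direct Leibniz expansion together with Sobolev interpolation and the weighted Hardy argument you describe for the $\bar\rho$ contributions. The only cosmetic difference is that for $k=1,2$ the paper does not invoke the commutator lemma but simply bounds $\|F_i\|_{L^2}$ (for $k=1$) and $\|\na F_i\|_{L^2}$ (for $k=2$) term by term; your commutator-based bookkeeping is an equivalent route to the same estimates.
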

\begin{proof}
We apply differential operator $\na^k$ to $\eqref{ns5}_1$, $\eqref{ns5}_2$ and $\eqref{ns5}_3$, multiply the resulting equations by $\na^kn$, $\na^kv$ and $\na^kq$, respectively, then integrate over $\mathbb{R}^3$, to find\beq\label{ehk}
\begin{split}
	&\f12\f{d}{dt}\|\na^k(n,v,q)\|_{L^2}^2+\mu_1\|\na^{k+1} v\|_{L^2}^2+\mu_2\|\na^k\dive v\|_{L^2}^2+\bar\kappa\|\na^{k+1} q\|_{L^2}^2\\
	=& \int \na^kF_1\cdot \na^kn dx+\int \na^kF_2\cdot \na^kv dx+\int \na^kF_3\cdot \na^kq dx.
\end{split}\eeq
For $k=1$, integrating by part and using the definition of $F_1$ and the estimates \eqref{density-control} and $\eqref{F-defi}_1$, one can obtain from Sobolev and Hardy inequalities that \beq\label{F11}
\begin{split}
\int \na F_1\cdot \na n dx \leq& C\|F_1\|_{L^2}\|\na^2n\|_{L^2}\\
\leq &C\int ( v\cdot\na n+n\dive v  )\cdot \na^{2}n dx
     +C\int (v\cdot\na\bar\rho+\bar\rho \dive v)\cdot \na^{2}n dx\\
\leq& C\big(\|v\|_{L^3}\|\na n\|_{L^6}+\|n\|_{L^3}\|\na v\|_{L^6}+\|(1+|x|)\na\bar\rho\|_{L^3}\|\f{v}{1+|x|}\|_{L^6}+\|\bar\rho\|_{L^3}\|\na v\|_{L^6}\big)\|\na^2n\|_{L^2}\\
\leq& C(\|(n,v)\|_{H^1}+\delta)\|\na^2(n,v)\|_{L^2}^2\leq C\delta\|\na^2(n,v)\|_{L^2}^2.
\end{split}
\eeq
It then follows from integrating by parts, Sobolev and Hardy inequalities, the estimates \eqref{density-control} and $\eqref{F-defi}_2$ that\beq\label{F21}
\begin{split}
    \int \na F_2\cdot \na v dx
    \leq& C\|F_2\|_{L^2}\|\na^2v\|_{L^2}\\
    \leq& C\int \big(v\cdot\na v+n\tri v+n\na\dive v+n\na n+q\na n+n\na q+q\na q\big)\na^2vdx\\
    &+C\int \big(\bar\rho\tri v+\bar\rho\na\dive v+\bar\rho\na n+n\na \bar\rho+q\na \bar\rho+\bar\rho\na q\big)\na^2vdx\\
	\leq& C\big(\|v\|_{L^3}\|\na v\|_{L^6}+\|n\|_{L^\infty}\|\na^2 v\|_{L^2}+\|n\|_{L^3}\|\na n\|_{L^6}+\|q\|_{L^3}\|\na n\|_{L^6}+\|n\|_{L^3}\|\na q\|_{L^6}\\
	&+\|q\|_{L^3}\|\na q\|_{L^6}+\|\bar\rho\|_{L^\infty}\|\na^2 v\|_{L^2}+\|\bar\rho\|_{L^3}\|\na (n,q)\|_{L^6}\\
&+\|(1+|x|)\na\bar\rho\|_{L^3}\|\f{(n,q)}{1+|x|}\|_{L^6}\big)\|\na^2v\|_{L^2}\\
	\leq& C(\|(n,v,q)\|_{H^2}+\delta)\|\na^2(n,v,q)\|_{L^2}^2\leq C\delta\|\na^2(n,v,q)\|_{L^2}^2.
\end{split}
\eeq
The application of integration by parts, Sobolev and Hardy inequalities, the estimates \eqref{density-control} and $\eqref{F-defi}_3$ implies directly
\beq\label{F31}
\begin{split}
	&\int \na F_3\cdot \na q dx\leq C\|F_3\|_{L^2}\|\na^2q\|_{L^2}\\
	\leq& C\int \big(v\cdot\na q+n\tri q+n\dive v+q\dive v+n\Psi(v)+\Psi(v)\big)\na^2qdx+C\int \big(\bar\rho\tri q+\bar\rho\dive v+\bar\rho\Psi(v)\big)\na^2qdx\\
	\leq&C\big(\|v\|_{L^3}\|\na q\|_{L^6}+\|n\|_{L^\infty}\|\na^2 q\|_{L^2}+\|n\|_{L^3}\|\na v\|_{L^6}+\|q\|_{L^3}\|\na v\|_{L^6}+\|n\|_{L^\infty}\|\na v\|_{L^3}\|\na v\|_{L^6}\\
	&+\|\na v\|_{L^3}\|\na v\|_{L^6}+\|\bar\rho\|_{L^\infty}\|\na^2 q\|_{L^2}+\|\bar\rho\|_{L^3}\|\na v\|_{L^6}+\|\bar\rho\|_{L^\infty}\|\na v\|_{L^3}\|\na v\|_{L^6}\big)\|\na^2q\|_{L^2}\\
	\leq& C(\|(n,v,q)\|_{H^2}+\delta)\|\na^2(n,v,q)\|_{L^2}^2\leq C\delta\|\na^2(n,v,q)\|_{L^2}^2.
\end{split}
\eeq
Substituting three estimates \eqref{F11}-\eqref{F31} into \eqref{ehk} for $k=1$, it holds true\beq\label{energy-na1}
\f{d}{dt}\|\na(n,v,q)\|_{L^2}^2+\|\na^{2} (v,q)\|_{L^2}^2\leq C\delta\|\na^2(n,v,q)\|_{L^2}^2.
\eeq
As for $k=2$, the Sobolev and Hardy inequalities yields directly
\beq\label{F12}
\begin{split}
	&\int \na^2 F_1\cdot \na^2 n dx=-\int \na F_1\cdot \na^3 n dx\leq C\|\na F_1\|_{L^2}\|\na^3n\|_{L^2}\\
	\leq& C\big(\|(n,v)\|_{L^3}\|\na^2 (n,v)\|_{L^6}+\|\na(n,v)\|_{L^3}\|\na (n,v)\|_{L^6}+\|\bar\rho\|_{L^3}\|\na^2 v\|_{L^6}\\
	&\quad+\|(1+|x|)\na\bar\rho\|_{L^3}\|\f{\na v}{1+|x|}\|_{L^6}+\|(1+|x|)^2\na^2\bar\rho\|_{L^3}\|\f {v}{(1+|x|)^2}\|_{L^6}\big)\|\na^3n\|_{L^2}\\
	\leq& C(\|(n,v)\|_{H^2}+\delta)\|\na^3(n,v)\|_{L^2}^2\leq C\delta\|\na^3(n,v)\|_{L^2}^2,
\end{split}
\eeq
where we have used the estimate \eqref{density-control} and the fact that\beq\label{na3na6}
\|\na(n,v)\|_{L^3}\|\na (n,v)\|_{L^6}\leq C\|\na^{\f34}(n,v)\|_{L^2}^{\f23}\|\na^3 (n,v)\|_{L^2}^{\f13}\|(n,v)\|_{L^2}^{\f13}\|\na^3 (n,v)\|_{L^2}^{\f23}\leq C\delta \|\na^3 (n,v)\|_{L^2}.
\eeq
By virtue of integration by parts and Sobolev inequality, we find\beq\label{na2F2-}
\begin{split}
	\int \na^2 F_2\cdot \na^2 v dx=-\int \na F_2\cdot \na^3 v dx\leq C\|\na F_2\|_{L^2}\|\na^3v\|_{L^2}.
\end{split}\eeq
With the help of the definition of $F_2$, we have\beq\label{na2F2--}
\begin{split}
	\|\na F_2\|_{L^2}
	\leq& C\big(\|\na(v\cdot\na v)\|_{L^2}+\|\na(f(n+\bar\rho)\na^2 v)\|_{L^2}+\|\na(g(n+\bar\rho,q)\na n)\|_{L^2}+\|\na(h(n,\bar\rho,q)\na \bar\rho)\|_{L^2}\\
	&\quad+\|\na(r(n+\bar\rho,q)\na q)\|_{L^2}\big)\\
	\overset{def}{=}&I_1+I_2+I_3+I_4+I_5.
\end{split}\eeq
Sobolev inequality and \eqref{na3na6} yield that
\beno
I_1\leq C\|v\|_{L^3}\|\na^2 v\|_{L^6}+C\|\na v\|_{L^3}\|\na v\|_{L^6}\leq C\delta \|\na^3v\|_{L^2}.
\eeno
In view of the estimate \eqref{density-control}, Sobolev and Hardy inequalities, we deduce that\beno\begin{split}
I_2\leq& C\|f(n+\bar\rho)\|_{L^\infty}\|\na^3v\|_{L^2}+C\|\na f(n+\bar\rho)\|_{L^3}\|\na^2v\|_{L^6}\\
\leq& C\|(n+\bar\rho)\|_{L^\infty}\|\na^3v\|_{L^2}+C\|\na (n+\bar\rho)\|_{L^3}\|\na^3v\|_{L^2}\\
\leq& C\delta\|\na^3v\|_{L^2}.
\end{split}
\eeno
The application of the estimates \eqref{density-control} and \eqref{na3na6}, Sobolev and Hardy inequalities yields directly\beno\begin{split}
	I_3\leq& C\|g(n+\bar\rho,q)\|_{L^3}\|\na^2n\|_{L^6}+C\|\na g(n+\bar\rho,q)\na n\|_{L^2}\\
	\leq& C\|(n+\bar\rho+q)\|_{L^3}\|\na^2n\|_{L^6}+C\|\na (n+q)\|_{L^3}\|\na n\|_{L^6}+C\|(1+|x|)\na \bar\rho\|_{L^3}\|\f{\na n}{1+|x|}\|_{L^6}\\
	\leq& C\delta\|\na^3(n,q)\|_{L^2}.
\end{split}
\eeno
By the estimate \eqref{density-control}, Hardy and Sobolev inequalities, it is easy to check that\beno
\begin{split}
	I_4\leq& C\|\f{h(n,\bar\rho,q)}{(1+|x|)^2}\|_{L^6}\|(1+|x|)^2\na^2\bar\rho\|_{L^3}+C\|\f {\na h(n,\bar\rho,q)}{1+|x|}\|_{L^6}\|(1+|x|)\na\bar\rho\|_{L^3}\\
	\leq& C\|\f{n+q}{(1+|x|)^2}\|_{L^6}\|(1+|x|)^2\na^2\bar\rho\|_{L^3}+C\|\f {\na(n+q)}{1+|x|}\|_{L^6}\|(1+|x|)\na\bar\rho\|_{L^3}\\
	\leq& C\delta\|\na^3(n,q)\|_{L^2}.
\end{split}
\eeno
Similar to the estimate of $I_3$, we apply the estimates \eqref{density-control} and \eqref{na3na6}, Sobolev and Hardy inequalities to obtain\beno\begin{split}
	I_5
	\leq& C\|(n+\bar\rho+q)\|_{L^3}\|\na^2q\|_{L^6}+C\|\na (n+q)\|_{L^3}\|\na q\|_{L^6}+C\|(1+|x|)\na \bar\rho\|_{L^3}\|\f{\na q}{1+|x|}\|_{L^6}\\
	\leq& C\delta\|\na^3(n,q)\|_{L^2}.
\end{split}
\eeno
Inserting the estimates of terms $I_1$ to $I_5$ into \eqref{na2F2--}, it follows immediately\beno
\begin{split}
	\|\na F_2\|_{L^2}\leq& C\delta\|\na^3(n,v,q)\|_{L^2}.
\end{split}\eeno
Substituting this estimate into \eqref{na2F2-}, we have
\beq\label{na2F2}
\begin{split}
	\int \na^2 F_2\cdot \na^2 v dx\leq C\delta\|\na^3(n,v,q)\|_{L^2}^2.
\end{split}\eeq
Integration by parts and Sobolev inequality imply that\beq\label{na2F3-}
\begin{split}
	\int \na^2 F_3\cdot \na^2 q dx=-\int \na F_3\cdot \na^3 q dx\leq C\|\na F_3\|_{L^2}\|\na^3q\|_{L^2}.
\end{split}\eeq
Remebering the definition of $F_2$, we find\beq\label{na2F3--}
\begin{split}
	\|\na F_3\|_{L^2}
	\leq& C\big(\|\na(v\cdot\na q)\|_{L^2}+\|\na(f(n+\bar\rho)\na^2 q)\|_{L^2}+\|\na(f(n+\bar\rho,q)\Psi(v))\|_{L^2}+\|\na\Psi(v)\|_{L^2}\\
	&\quad+\|\na(m(n+\bar\rho,q)\dive v)\|_{L^2}\big)\\
	\overset{def}{=}&J_1+J_2+J_3+J_4+J_5.
\end{split}\eeq
According to the Sobolev inequality and the estimate \eqref{na3na6}, we deduce that\beno
J_1\leq C\|v\|_{L^3}\|\na^2 q\|_{L^6}+C\|\na v\|_{L^3}\|\na q\|_{L^6}\leq C\delta \|\na^3(v,q)\|_{L^2}.
\eeno
By Sobolev inequality and the estimate \eqref{density-control}, one can deduce directly that\beno\begin{split}
	J_2\leq& C\|f(n+\bar\rho)\|_{L^\infty}\|\na^3q\|_{L^2}+C\|\na f(n+\bar\rho)\|_{L^3}\|\na^2q\|_{L^6}\\
	\leq& C\|(n+\bar\rho)\|_{L^\infty}\|\na^3q\|_{L^2}+C\|\na (n+\bar\rho)\|_{L^3}\|\na^3q\|_{L^2}\\
	\leq& C\delta\|\na^3q\|_{L^2}.
\end{split}
\eeno
Using the Sobolev and Hardy inequalities, the estimate \eqref{density-control} and \eqref{na3na6}, it holds\beno
\begin{split}
	J_3\leq& C\|f(n+\bar\rho)\|_{L^\infty}\|\na v\|_{L^3}\|\na^2 v\|_{L^6}+C\|\na f(n+\bar\rho)\|_{L^\infty}\|\na v\|_{L^3}\|\na v\|_{L^6}\\
	\leq& C\|(n+\bar\rho)\|_{L^\infty}\|\na v\|_{L^3}\|\na^3 v\|_{L^2}+C\|\na(n+\bar\rho)\|_{L^\infty}\|\na v\|_{L^3}\|\na v\|_{L^6}\\
	\leq& C\delta\|\na^3v\|_{L^2}.
\end{split}
\eeno
According to Sobolev inequality and the estimate \eqref{na3na6}, we obtain immediately\beno
\begin{split}
	J_4\leq C\|\na v\|_{L^3}\|\na^2 v\|_{L^6}\leq C\delta\|\na^3 v\|_{L^2}.
\end{split}
\eeno
In view of the estimates \eqref{density-control} and \eqref{na3na6}, Sobolev and Hardy inequalities, one deduces that\beno
\begin{split}
	J_5\leq& C\|m(n+\bar\rho,q)\|_{L^3}\|\na^2v\|_{L^6}+C\|\na m(n+\bar\rho,q)\na v\|_{L^2}\\
	\leq& C\|(n+\bar\rho+q)\|_{L^3}\|\na^3v\|_{L^2}+C\|\na(n+q)\|_{L^3}\|\na v\|_{L^6}+C\|(1+|x|)\na\bar\rho\|_{L^3}\|\f{\na v}{1+|x|}\|_{L^6}\\
	\leq& C\delta\|\na^3(n,v,q)\|_{L^2}.
\end{split}
\eeno
Then, the combination of the estimates of terms $J_1$ to $J_5$ and \eqref{na2F3--} implies directly\beno
\|\na F_3\|_{L^2}\leq C\delta\|\na^3(n,v,q)\|_{L^2}.
\eeno
We substitute this estimate into \eqref{na2F3-}, to find
\beq\label{na2F3}\begin{split}
	\int \na^2 F_3\cdot \na^2 q dx
    \leq C\delta\|\na^3(n,v,q)\|_{L^2}^2.
\end{split}
\eeq
Substituting \eqref{F12}, \eqref{na2F2} and \eqref{na2F3} into \eqref{ehk} for $k=2$, we find
\beno
\f{d}{dt}\|\na^2(n,v,q)\|_{L^2}^2+\|\na^{3} (v,q)\|_{L^2}^2\leq C\delta\|\na^3(n,v,q)\|_{L^2}^2,
\eeno
which, together with \eqref{energy-na1}, gives \eqref{en1} directly.
	Thus, we complete the proof of this lemma.
\end{proof}

We then derive the energy estimate for third order spatial derivative of the solution.
\begin{lemm}\label{enn}
	Under the assumptions in Theorem \ref{them3}, it holds that
	\beq\label{en2}
	\f{d}{dt}\|\na^{3}(n,v,q)\|_{L^2}^2+\|\na^{4}(v,q)\|_{L^2}^2\leq C\delta \|(\na^{3}n,\na^{4}v,\na^4q)\|_{L^2}^2,
	\eeq
	where $C$ is a positive constant independent of time.
\end{lemm}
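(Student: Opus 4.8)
The plan is to run the same energy scheme as in Lemma \ref{enn-1}, now at the top order. First I would apply $\na^3$ to each equation of \eqref{ns5}, test the three resulting equations against $\na^3 n$, $\na^3 v$ and $\na^3 q$ respectively, integrate over $\mathbb{R}^3$ and add. The linear coupling terms cancel after one integration by parts: $\gamma\int(\na^3\dive v\,\na^3 n+\na^3\na n\cdot\na^3 v)\,dx=0$ and $\bar\lam\int(\na^3\na q\cdot\na^3 v+\na^3\dive v\,\na^3 q)\,dx=0$. This leaves the dissipation $\mu_1\|\na^4 v\|_{L^2}^2+\mu_2\|\na^3\dive v\|_{L^2}^2+\bar\kappa\|\na^4 q\|_{L^2}^2$, which dominates $\|\na^4(v,q)\|_{L^2}^2$ since $\mu_1,\bar\kappa>0$, balanced against the three nonlinear integrals $\int\na^3F_1\,\na^3 n\,dx$, $\int\na^3F_2\cdot\na^3 v\,dx$ and $\int\na^3F_3\,\na^3 q\,dx$. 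Everything then reduces to showing each of these is $\le C\delta\,\|(\na^3 n,\na^4 v,\na^4 q)\|_{L^2}^2$.

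\textbf{The decisive structural constraint} is that the density carries no dissipation at this order, so the estimate must never generate $\|\na^4 n\|_{L^2}$. I would therefore treat the three integrals asymmetrically. For the two integrals tested against $\na^3 v$ and $\na^3 q$ I would integrate by parts once, writing $\int\na^3 F_2\cdot\na^3 v\,dx=-\int\na^2 F_2\cdot\na^4 v\,dx\le C\|\na^2 F_2\|_{L^2}\|\na^4 v\|_{L^2}$ (and similarly for $F_3$), and then bound $\|\na^2 F_2\|_{L^2}$ and $\|\na^2 F_3\|_{L^2}$ exactly as the quantities $\|\na F_2\|_{L^2}$, $\|\na F_3\|_{L^2}$ were bounded in Lemma \ref{enn-1}, one derivative higher: each term of $F_2,F_3$ from \eqref{F-defi} is a product whose factors are distributed by the Leibniz rule and estimated by Gagliardo--Nirenberg (Lemma \ref{inter}), the interpolation \eqref{na3na6}, and the commutator estimate (Lemma \ref{commutator}), all producing the factor $\delta$. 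For the integral $\int\na^3 F_1\,\na^3 n\,dx$ I would deliberately not integrate by parts, since that would raise $\na^3 n$ to $\na^4 n$; instead I keep the pairing and isolate the only dangerous contribution, the top-order transport piece $v\cdot\na\na^3 n$, using the antisymmetry $\int (v\cdot\na)\na^3 n\,\na^3 n\,dx=-\tfrac12\int\dive v\,|\na^3 n|^2\,dx\le C\|\na v\|_{L^\infty}\|\na^3 n\|_{L^2}^2\le C\delta\|\na^3 n\|_{L^2}^2$. The remaining pieces of $\na^3 F_1$ carry at most $\na^4 v$ or $\na^3 n$ and are routed into the admissible norms.

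The genuinely new difficulty relative to the force-free case is the spatially-dependent background $\bar\rho=\rho^*-\rho_\infty$, which appears in $F_1,F_2,F_3$ through the terms $\dive(\bar\rho v)$, $\na(\bar\rho(n+q))$, $\bar\rho\,\tri v$, and the coefficient functions $f,g,h,r,m$. At this order up to $\na^4\bar\rho$ falls on these products, which is exactly why the hypothesis \eqref{phik} is imposed through $k=4$ and translated into the weighted bound \eqref{density-control}. The scheme here is to pair each weight $(1+|x|)^{j}$ with $\na^{j}\bar\rho$ (controlled by \eqref{density-control}) and to absorb the compensating negative weight $(1+|x|)^{-j}$ sitting on the perturbation by iterating the Hardy inequality (Lemma \ref{hardy}), e.g. $\|\tfrac{n+q}{(1+|x|)^{3}}\|_{L^{6}}\lesssim\|\na^3(n,q)\|_{L^2}$; combined with \eqref{density-control} this renders every $\bar\rho$-term $\lesssim\delta$ times an admissible highest-order norm.

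Summing the three bounds and choosing $\delta$ small lets the $C\delta\|\na^4(v,q)\|_{L^2}^2$ contributions be absorbed by the dissipation, yielding \eqref{en2}. \textbf{I expect the main obstacle to be the bookkeeping in $\|\na^2 F_2\|_{L^2}$ and $\|\na^2 F_3\|_{L^2}$}, where the convection, the density-dependent coefficients $f,g,h,r,m$, and the weighted background terms all appear simultaneously, and where one must confirm that, after every integration by parts and every application of Hardy's inequality, no $\|\na^4 n\|_{L^2}$ survives and each remaining factor genuinely carries the smallness $\delta$ rather than an uncontrolled order-three norm of $v$ or $q$.
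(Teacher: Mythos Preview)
Your proposal is correct and mirrors the paper's proof exactly: the same energy identity \eqref{ehk1}, the asymmetric treatment of the nonlinear integrals (one integration by parts for the $F_2$ and $F_3$ integrals to expose $\|\na^2 F_2\|_{L^2}$ and $\|\na^2 F_3\|_{L^2}$; no integration by parts for the $F_1$ integral, with the top-order transport piece handled by the antisymmetry identity and the remainder by the commutator estimate), and the same weighted Hardy machinery for the $\bar\rho$-terms. One small slip in your illustration: the bound $\|\tfrac{n+q}{(1+|x|)^{3}}\|_{L^{6}}$ actually costs four derivatives after Sobolev plus iterated Hardy, so for the density one must instead pair in $L^2$ (as the paper does in the estimate of $K_4$) to stay at $\|\na^3 n\|_{L^2}$, reserving the $L^6$ route for $q$ where $\|\na^4 q\|_{L^2}$ is admissible.
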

\begin{proof}
Applying differential operator $\na^{3}$ to $\eqref{ns5}_1$, $\eqref{ns5}_2$ and $\eqref{ns5}_3$,
multiplying the resulting equations by $\na^{3}n$, $\na^{3}v$ and $\na^{3}q$, respectively,
and integrating over $\mathbb{R}^3$, it holds
\beq\label{ehk1}
\begin{split}
	&\f12\f{d}{dt}\|\na^{3}(n,v,q)\|_{L^2}^2+\mu_1\|\na^{4} v\|_{L^2}^2+\mu_2\|\na^{3}\dive v\|_{L^2}^2+\bar\kappa\|\na^{4} q\|_{L^2}^2\\
	=& \int \na^{3}F_1\cdot \na^{3}n dx+\int \na^{3}F_2\cdot \na^{3}v dx+\int \na^{3}F_3\cdot \na^{3}q dx.
\end{split}
\eeq
Now we estimate three terms on the right handside of \eqref{ehk1} separately. In view of the definition of $F_1$, Sobolev inequality Lemma \ref{commutator} and integration by parts, we have
\beq\label{F13-}
\begin{split}
\int \na^{3}F_1\cdot \na^{3}n dx
=
&C\int \na^{3}( v\cdot\na n+n\dive v+v\cdot\na\bar\rho+\bar\rho \dive v)\cdot \na^{3}n dx\\
\leq&-C\int \dive v|\na^3n|^2dx+\big(\|[\na^3,v]\cdot \na n\|_{L^2}+\|\na^3(n\dive v)\|_{L^2}\\
&\quad+\|\na^3(v\cdot\na\bar\rho+\bar\rho\dive v)\|_{L^2}\big)\|\na^3 n\|_{L^2}.
\end{split}
\eeq
It is easy to check that\beq\label{F131}
\begin{split}
	\|[\na^3,v]\cdot \na n\|_{L^2}\leq C\|\na v\|_{L^\infty}\|\na^3n\|_{L^2}+C\|\na^2v\|_{L^3}\|\na^2 n\|_{L^6}+C\|\na^3v\|_{L^6}\|\na n\|_{L^3}\leq C\delta \|(\na^3n,\na^4v)\|_{L^2}.
\end{split}
\eeq
It then follows from Sobolev inequality that\beq\label{F132}
\begin{split}
	\|\na^3(n\dive v)\|_{L^2}\leq& C\|n\|_{L^\infty}\|\na^4v\|_{L^2}+C\|\na^3n\|_{L^2}\|\na v\|_{L^\infty}\leq C\delta\|(\na^3n,\na^4v)\|_{L^2}.
\end{split}\eeq
The application of Sobolev and Hardy inequalities yields directly
\beq\label{F133}\begin{split}
	&\|\na^3(v\cdot\na\bar\rho+\bar\rho\dive v)\|_{L^2}\\
	\leq &C\sum_{l=0}^{3}\Big(\|(1+|x|)^{4-l}\na^{4-l}\bar\rho\|_{L^\infty}\|\f{\na^lv}{(1+|x|)^{4-l}}\|_{L^2}+\|(1+|x|)^{l}\na^{l}\bar\rho\|_{L^\infty}\|\f{\na^{4-l}v}{(1+|x|)^{l}}\|_{L^2}\Big)\\
	\leq& C\delta\|\na^4v\|_{L^2}.
\end{split}
\eeq
Hence, the combination of the estimates \eqref{F13-}-\eqref{F133} gives
\beq\label{F13}
\begin{split}
	\int \na^{3}F_1\cdot \na^{3}n dx
	\leq C\delta \|(\na^{3}n,\na^{4}v)\|_{L^2}^2.
\end{split}
\eeq
Integration by parts, by use of Sobolev inequality gives\beq\label{F23-}
\int \na^{3}F_2\cdot \na^{3}v dx=-\int \na^{2}F_2\cdot \na^{4}v dx\leq C\|\na^2F_2\|_{L^2}\|\na^4v\|_{L^2}.
\eeq
In view of the definition of $F_2$, we have\beq\label{F23--}\begin{split}
	\|\na^2 F_2\|_{L^2}
	\leq& C\big(\|\na^2(v\cdot\na v)\|_{L^2}+\|\na^2(f(n+\bar\rho)\na^2 v)\|_{L^2}+\|\na^2(g(n+\bar\rho,q)\na n)\|_{L^2}\\
	&\quad+\|\na^2(h(n,\bar\rho,q)\na \bar\rho)\|_{L^2}+\|\na^2(r(n+\bar\rho,q)\na q)\|_{L^2}\big)\\
	\overset{def}{=}&K_1+K_2+K_3+K_4+K_5.
\end{split}
\eeq
Thanks to the commutator estimate in Lemma \ref{commutator} and Sobolev inequality, one can deduce that\beno
\begin{split}
	K_1\leq C\|v\|_{L^3}\|\na^3 v\|_{L^6}+C\|[\na^2,v]\cdot\na v\|_{L^2}\leq C\|v\|_{L^3}\|\na^4 v\|_{L^2}+C\|\na v\|_{L^3}\|\na^2v\|_{L^6}\leq C\delta\|\na^4 v\|_{L^2},
\end{split}
\eeno
where we have used the following estimate in the last inequality\beq\label{navna2v}
\begin{split}
	\|\na v\|_{L^3}\|\na^2v\|_{L^6}\leq C\|\na^{\f23}v\|_{L^2}^{\f34}\|\na^4v\|_{L^2}^{\f14}\|v\|_{L^2}^{\f14}\|\na^4v\|_{L^2}^{\f34}\leq C\delta\|\na^4v\|_{L^2}.
\end{split}
\eeq
According to the estimate \eqref{density-control}, Sobolev and Hardy inequalities, we deduce that\beno
\begin{split}
	K_2\leq& C\|(n+\bar\rho)\|_{L^\infty}\|\na^4 v\|_{L^2}+C\|\na^2 n\|_{L^6}\|\na^2v\|_{L^3}+C\|\na n\|_{L^\infty}\|\na n\|_{L^3}\|\na^2 v\|_{L^6}\\
	&\quad+C\|(1+|x|)^2\na^2\bar\rho\|_{L^\infty}\|\f{\na^2v}{(1+|x|)^2}\|_{L^2}+C\|\na n\|_{L^3}\|(1+|x|)\na\bar\rho\|_{L^\infty}\|\f{\na^2v}{1+|x|}\|_{L^6}\\
	&\quad+C\|(1+|x|)\na\bar\rho\|_{L^\infty}^2\|\f{\na^2v}{(1+|x|)^2}\|_{L^2}\\
	\leq& C\delta\|(\na^3 n,\na^4 v)\|_{L^2},
\end{split}
\eeno
where we have used the following estimtes in the last inequality\beq\label{na3na26}
\begin{split}
	\|\na n\|_{L^3}\|\na^2 v\|_{L^6}\leq \|\na n\|_{L^2}^{\f34}\|\na^3 n\|_{L^2}^{\f14}\|v\|_{L^2}^{\f14}\|\na^4 v\|_{L^2}^{\f34}\leq C\delta \|(\na^3 n,\na^4 v)\|_{L^2}.
\end{split}
\eeq
Using the estimate \eqref{density-control}, Sobolev and Hardy inequalities, it holds\beno
\begin{split}
	K_3\leq& C\|(n+\bar\rho+q)\|_{L^\infty}\|\na^3 n\|_{L^2}+C\|\na^2 n\|_{L^6}\|\na n\|_{L^3}+C\|\na^2 q\|_{L^6}\|\na n\|_{L^3}+\|\na (n,q)\|_{L^\infty}\|\na(n,q)\|_{L^3}\|\na n\|_{L^6}\\
	&\quad+C\|(1+|x|)^2\na^2\bar\rho\|_{L^\infty}\|\f{\na n}{(1+|x|)^2}\|_{L^2}+C\|\na (n,q)\|_{L^3}\|(1+|x|)\na\bar\rho\|_{L^\infty}\|\f{\na n}{1+|x|}\|_{L^6}\\
	&\quad+C\|(1+|x|)\na\bar\rho\|_{L^\infty}^2\|\f{\na n}{(1+|x|)^2}\|_{L^2}\\
	\leq& C\delta\|(\na^3 n,\na^4 v,\na^4q)\|_{L^2},
\end{split}
\eeno
where we have used the estimate \eqref{na3na26} and the following estimtes\beq\label{nannanq}
\begin{split}
	\|\na n\|_{L^3}\|\na n\|_{L^6}\leq& \|\na^{\f34} n\|_{L^2}^{\f23}\|\na^3 n\|_{L^2}^{\f13}\|n\|_{L^2}^{\f13}\|\na^3 n\|_{L^2}^{\f23}\leq C\delta \|\na^3 n\|_{L^2},\\
	\|\na q\|_{L^3}\|\na n\|_{L^6}\leq&\|\na^{\f14} q\|_{L^2}^{\f23}\|\na^4 q\|_{L^2}^{\f13}\|n\|_{L^2}^{\f13}\|\na^3 n\|_{L^2}^{\f23}\leq C\delta \|(\na^3 n,\na^4q)\|_{L^2}.
\end{split}
\eeq
We can use Sobolev and Hardy inequalities, the estimates \eqref{navna2v} and \eqref{nannanq} to find\beno
\begin{split}
	K_4\leq& C\|(1+|x|)^3\na^3\bar\rho\|_{L^\infty}\|\f{ n}{(1+|x|)^3}\|_{L^2}+C\|(1+|x|)^3\na^3\bar\rho\|_{L^3}\|\f{ q}{(1+|x|)^3}\|_{L^6}+C\|(1+|x|)\na\bar\rho\|_{L^\infty}\|\f{\na^2 n}{1+|x|}\|_{L^2}\\
	&\quad+C\|(1+|x|)\na\bar\rho\|_{L^3}\|\f{ \na^2q}{1+|x|}\|_{L^6}+\|\na\bar\rho\|_{L^\infty}\|\na (n,q)\|_{L^3}\|\na n\|_{L^6}+\|(1+|x|)\na\bar\rho\|_{L^\infty}\|\na q\|_{L^3}\|\f{\na q}{1+|x|}\|_{L^6}\\
	\leq& C\delta\|(\na^3 n,\na^4q)\|_{L^2}.
\end{split}
\eeno
It then follows from the estimate \eqref{nannanq}, Sobolev and Hardy inequalities that\beno
\begin{split}
	K_5\leq&C\|(n+\bar\rho+q)\|_{L^3}\|\na^3 q\|_{L^6}+C\|\na^2 (n,q)\|_{L^6}\|\na q\|_{L^3}+C\|\na (n,q)\|_{L^\infty}\|\na (n,q)\|_{L^6}\|\na q\|_{L^3}\\
	&\quad+C\|(1+|x|)^2\na^2\bar\rho\|_{L^3}\|\f{\na q}{(1+|x|)^2}\|_{L^6}+C\|\na\bar\rho\|_{L^\infty}\|\na (n,q)\|_{L^6}\|{\na q}\|_{L^3}\\
	&\quad+C\|(1+|x|)\na\bar\rho\|_{L^\infty}\|(1+|x|)\na\bar\rho\|_{L^3}\|\f{\na q}{(1+|x|)^2}\|_{L^6}\\
	\leq& C\delta\|(\na^3 n,\na^4q)\|_{L^2},
\end{split}
\eeno
where we have used the following estimtes\beq\label{navnav}
\begin{split}
	\|\na q\|_{L^3}\|\na q\|_{L^6}\leq\|q\|_{L^2}^{\f12}\|\na^4 q\|_{L^2}^{\f12}\|\na q\|_{L^2}^{\f12}\|\na^4 q\|_{L^2}^{\f12}\leq C\delta \|\na^4 q\|_{L^2}.
\end{split}
\eeq
Substituting the estimates of terms $K_1$ to $K_5$ into \eqref{F23--}, we find\beq\label{F2-estimate1}\begin{split}
	\|\na^2 F_2\|_{L^2}\leq C\delta \|(\na^3n,\na^4v,\na^4 q)\|_{L^2}.
\end{split}
\eeq
Hence, the combination of the estimate above and \eqref{F23-} implies directly
\beq\label{F23}
\begin{split}
	\int \na^{3}F_2\cdot \na^{3}v dx
	\leq C\delta \|(\na^{3}n,\na^{4}v,\na^4q)\|_{L^2}^2.
\end{split}
\eeq
It is easy to deduce by using integrating by parts and Sobolev inequality that
\beq\label{F33-}
\int \na^{3}F_3\cdot \na^{3}q dx=-\int \na^{2}F_3\cdot \na^{4}q dx\leq C\|\na^2F_3\|_{L^2}\|\na^4q\|_{L^2}.
\eeq
With the aid of the definition of $F_3$, we have\beq\label{F33--}
\begin{split}
	\|\na^2F_3\|_{L^2}\leq& C\big(\|\na^2(v\cdot\na q)\|_{L^2}+\|\na^2(f(n+\bar\rho)\na^2 q)\|_{L^2}+\|\na^2(f(n+\bar\rho,q)\Psi(v))\|_{L^2}+\|\na^2\Psi(v)\|_{L^2}\\
	&\quad+\|\na^2(m(n+\bar\rho,q)\dive v)\|_{L^2}\big)\\
	\overset{def}{=}&L_1+L_2+L_3+L_4+L_5.
\end{split}
\eeq
According to Sobolev inequality, the commutator estimate in Lemma \ref{commutator} and the estimate \eqref{navna2v}, we obtain immediately\beno
L_1\leq  C\|v\|_{L^3}\|\na^3 q\|_{L^6}+C\|[\na^2,v]\cdot\na q\|_{L^2}\leq C\|v\|_{L^3}\|\na^4 q\|_{L^2}+C\|\na v\|_{L^3}\|\na^2q\|_{L^6}\leq C\delta\|\na^4 q\|_{L^2}.
\eeno
We employ Sobolev inequality and the estimate \eqref{na3na26}, to get\beno
\begin{split}
	L_2\leq& C\|(n+\bar\rho)\|_{L^\infty}\|\na^4 q\|_{L^2}+C\|\na^2 n\|_{L^6}\|\na^2q\|_{L^3}+C\|\na n\|_{L^\infty}\|\na n\|_{L^3}\|\na^2 q\|_{L^6}\\
	&\quad+C\|(1+|x|)^2\na^2\bar\rho\|_{L^\infty}\|\f{\na^2q}{(1+|x|)^2}\|_{L^2}+C\|\na n\|_{L^3}\|(1+|x|)\na\bar\rho\|_{L^\infty}\|\f{\na^2q}{1+|x|}\|_{L^6}\\
	&\quad+C\|(1+|x|)\na\bar\rho\|_{L^\infty}^2\|\f{\na^2q}{(1+|x|)^2}\|_{L^2}\\
	\leq& C\delta\|(\na^3 n,\na^4 q)\|_{L^2}.
\end{split}
\eeno
Applying the estiamte \eqref{navnav}, Sobolev and Hardy inequalities, we obtain\beno
\begin{split}
	L_3\leq& C\|(n+\bar\rho)\|_{L^\infty}\big(\|\na v\|_{L^3}\|\na^3v\|_{L^6}+\|\na^2 v\|_{L^3}\|\na^2v\|_{L^6}\big)+C\|\na^2 n\|_{L^6}\|\na v\|_{L^3}\|\na v\|_{L^\infty}\\
	&\quad+C\|(1+|x|)^2\na^2\bar\rho\|_{L^\infty}\|\na v\|_{L^3}\|\f{\na v}{(1+|x|)^2}\|_{L^6}+\|\na(n+\bar\rho)\|_{L^\infty}^2\|\na v\|_{L^3}\|\na v\|_{L^6}\\
	\leq& C\delta \|(\na^3 n,\na^4 v)\|_{L^2},
\end{split}
\eeno
where we have used the following estimate in the last inequality\beq\label{na2vna2v}
\|\na^2 v\|_{L^3}\|\na^2v\|_{L^6}\leq C\|\na^2 v\|_{L^2}^{\f34}\|\na^4v\|_{L^2}^{\f14}\| v\|_{L^2}^{\f14}\|\na^4v\|_{L^2}^{\f34},
\eeq
We then use the estimate \eqref{na2vna2v}, Sobolev and Hardy inequalities, to find\beno
\begin{split}
	L_4\leq C\|\na v\|_{L^3}\|\na^3v\|_{L^6}+C\|\na^2 v\|_{L^3}\|\na^2v\|_{L^6}\leq C\delta\|\na^4v\|_{L^2}.
\end{split}
\eeno
To deal with last term $L_5$, by virtue of the estimates \eqref{navna2v}-\eqref{navnav}, Sobolev and Hardy inequalities, we arrive at\beno
\begin{split}
	L_5\leq& C\|(n+\bar\rho+q)\|_{L^3}\|\na^3v\|_{L^6}+C\|\na^2(n+q)\|_{L^6}\|\na v\|_{L^3}+C\|\na(n+q)\|_{L^\infty}\|\na(n+q)\|_{L^6}\|\na v\|_{L^3}\\
	&\quad+C\|(1+|x|)^2\na^2\bar\rho\|_{L^3}\|\f{\na v}{(1+|x|)^2}\|_{L^6}+C\|na \bar\rho\|_{L^\infty}\|\na(n+q)\|_{L^6}\|{\na v}\|_{L^3}\\
	&\quad+C\|(1+|x|)\na \bar\rho\|_{L^\infty}\|(1+|x|)\na \bar\rho\|_{L^3}\|\f{\na v}{(1+|x|)^2}\|_{L^6}\\
	\leq&C\delta \|(\na^3 n,\na^4 v,\na^4q)\|_{L^2}.
\end{split}
\eeno
Hence, the combination of estimates of terms $L_1$ to $L_5$ and \eqref{F33--} implies directly\beno
\begin{split}
	\|\na^2F_3\|_{L^2}\leq& C\delta\|(\na^3 n,\na^4v,\na^4 q)\|_{L^2}.
\end{split}
\eeno
Inserting this estimate into \eqref{F33-}, we thereby deduce that
\beq\label{F33}
\begin{split}
	\int \na^{3}F_3\cdot \na^{3}q dx
	\leq C\delta \|(\na^{3}n,\na^{4}v,\na^4q)\|_{L^2}^2.
\end{split}
\eeq
Plugging the estimates \eqref{F13}, \eqref{F23} and \eqref{F33} into \eqref{ehk1} gives \eqref{en2} directly.
Therefore, the proof of this lemma is completed.
\end{proof}

Finally, we aim to recover the dissipation estimate for $n$.
\begin{lemm}\label{ennjc}
	Under the assumptions in Theorem \ref{them3}, for $k=1,2$, it holds that
	\beq\label{en3}
	\f{d}{dt}\int \na^k v\cdot\na^{k+1}ndx+\|\na^{k+1}n\|_{L^2}^2\leq C_1\|(\na^{k+1}v,\na^{k+2}v)\|_{L^2}^2,
	\eeq
	where $C_1$ is a positive constant independent of $t$.
\end{lemm}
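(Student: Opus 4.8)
The plan is to exploit that the momentum equation $\eqref{ns5}_2$ contains the term $\gamma\na n$, which is the only place where a coercive contribution in the density gradient is produced, and to differentiate the mixed quantity $\int\na^k v\cdot\na^{k+1}n\,dx$ in time. First I would write
\[
\f{d}{dt}\int\na^k v\cdot\na^{k+1}n\,dx
=\int\na^k v_t\cdot\na^{k+1}n\,dx+\int\na^k v\cdot\na^{k+1}n_t\,dx,
\]
and substitute $\na^k v_t$ from the momentum equation $\eqref{ns5}_2$ into the first integral and $\na^{k+1}n_t$ from the continuity equation $\eqref{ns5}_1$ into the second. The substitution of $\eqref{ns5}_2$ isolates the good term $-\gamma\|\na^{k+1}n\|_{L^2}^2$, which I keep on the left-hand side; the remaining linear pieces are the temperature coupling $-\bar\lam\int\na^{k+1}q\cdot\na^{k+1}n\,dx$, the viscous couplings $\mu_1\int\na^k\tri v\cdot\na^{k+1}n\,dx$ and $\mu_2\int\na^{k+1}\dive v\cdot\na^{k+1}n\,dx$, and the nonlinear term $\int\na^kF_2\cdot\na^{k+1}n\,dx$. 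From the continuity equation the second integral splits into $-\gamma\int\na^k v\cdot\na^{k+1}\dive v\,dx$ and $\int\na^k v\cdot\na^{k+1}F_1\,dx$.

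The heart of the argument is that every term other than $-\gamma\|\na^{k+1}n\|_{L^2}^2$ must be paid for by a velocity norm or by a small multiple of $\|\na^{k+1}n\|_{L^2}^2$ that can be absorbed back to the left. For the viscous couplings I would \emph{not} integrate by parts (that would raise the density order), but rather bound $\|\na^k\tri v\|_{L^2}$ and $\|\na^{k+1}\dive v\|_{L^2}$ directly by $\|\na^{k+2}v\|_{L^2}$ and apply Young's inequality, producing $\eps\|\na^{k+1}n\|_{L^2}^2+C\|\na^{k+2}v\|_{L^2}^2$; this is precisely why the highest-order velocity norm $\|\na^{k+2}v\|_{L^2}$ must appear on the right. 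For the term $-\gamma\int\na^k v\cdot\na^{k+1}\dive v\,dx$ coming from the continuity equation I would integrate by parts once to move one derivative off $\dive v$ onto $\na^k v$, turning it into a pure order-$(k+1)$ velocity quantity bounded by $\|\na^{k+1}v\|_{L^2}^2$. The nonlinear contributions $\int\na^kF_2\cdot\na^{k+1}n\,dx$ and $\int\na^k v\cdot\na^{k+1}F_1\,dx$ I would treat exactly with the machinery already built in Lemmas \ref{enn-1} and \ref{enn}: using $\eqref{F-defi}$, the pointwise bounds on $f,g,h,r,m$, the Sobolev and Hardy inequalities, and the smallness $\eqref{density-control}$, these are $O(\delta)$ multiples of products of high-order norms and hence reduce, via Young, to $\eps\|\na^{k+1}n\|_{L^2}^2$ plus controllable velocity norms.

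\textbf{Main obstacle.} The genuinely delicate point is the temperature-gradient coupling $-\bar\lam\int\na^{k+1}q\cdot\na^{k+1}n\,dx$, which does not reduce to a velocity quantity: Young's inequality yields an absorbable piece $\eps\|\na^{k+1}n\|_{L^2}^2$ together with a residual $C\|\na^{k+1}q\|_{L^2}^2$. To close the estimate in the clean velocity-only form $\eqref{en3}$, this temperature piece must be dominated by the temperature dissipation $\|\na^{k+1}q\|_{L^2}^2$ already supplied by Lemma \ref{enn-1}, which is exactly what happens when the cross-term estimate is multiplied by the small weight $\eta_1$ and combined with the basic energy estimate in the proof of Proposition \ref{nenp}. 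Thus I expect the care to lie entirely in the bookkeeping of derivative orders — ensuring no density derivative beyond order $k+1$ is ever generated, so that the single coercive term $-\gamma\|\na^{k+1}n\|_{L^2}^2$ suffices — and in reconciling the temperature coupling with the available dissipation; after fixing $\eps$ small, the coercive coefficient of $\|\na^{k+1}n\|_{L^2}^2$ is positive and $\eqref{en3}$ follows.
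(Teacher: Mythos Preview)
Your approach is essentially the same as the paper's: differentiate the cross term, substitute from the momentum and continuity equations, keep $-\gamma\|\na^{k+1}n\|_{L^2}^2$ as the coercive term, and bound all remaining contributions by Young's inequality together with the nonlinear estimates already proved in Lemmas~\ref{enn-1} and~\ref{enn}. The only cosmetic difference is that the paper first writes $\int\na^k v_t\cdot\na^{k+1}n\,dx$ from the momentum equation and then unwinds the time derivative, whereas you start from $\f{d}{dt}\int\na^k v\cdot\na^{k+1}n\,dx$ directly; algebraically these are identical.

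You are right to flag the temperature coupling $-\bar\lam\int\na^{k+1}q\cdot\na^{k+1}n\,dx$: the paper's displayed inequality \eqref{vnjc} and the final statement \eqref{en3} simply omit the residual $\|\na^{k+1}q\|_{L^2}^2$ (and the paper's own bound \eqref{ss2} on the $F_2$ integral carries a $C\delta\|\na^{k+2}q\|_{L^2}^2$ term that likewise never appears in \eqref{en3}). So strictly speaking the lemma as stated is missing $q$-terms on the right-hand side; your observation that these are harmlessly absorbed by the $q$-dissipation of Lemmas~\ref{enn-1}--\ref{enn} once the cross estimate is weighted by $\eta_1$ in Proposition~\ref{nenp} is exactly the correct fix, and it is implicitly what the paper relies on.
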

\begin{proof}
	Applying differential operator $\na^k$ to $\eqref{ns5}_2$, multiplying the resulting equation by $\na^{k+1}n$, and integrating over $\mathbb{R}^3$, one arrives at\beq\label{vnjc}
	\begin{split}
		\int \na^{k}v_t\cdot\na^{k+1}n dx+\|\na^{k+1}n\|_{L^2}^2\leq C \|\na^{k+2}v\|_{L^2}^2+\int \na^kF_2\cdot\na^{k+1}n dx.
	\end{split}
	\eeq
	The way we deal with $\int \na^{k}v_t\cdot\na^{k+1}n dx$ is to turn the time derivative of the velocity to the density.
	Then, applying differential operator $\na^k$ to the mass equation $\eqref{ns5}_1$, we find
	\[\na^kn_t+\gamma\na^{k}\dive v=\na^k F_1.\]
	Hence, we can transform time derivative to the spatial derivative, i.e.,\beno
	\begin{split}
		\int \na^{k}v_t\cdot\na^{k+1}n dx=&\f{d}{dt}\int \na^{k}v\cdot\na^{k+1}n dx-\int \na^{k}v\cdot\na^{k+1}n_t dx\\
		=&\f{d}{dt}\int \na^{k}v\cdot\na^{k+1}n dx+\gamma\int \na^{k} v\cdot\na^{k+1}\dive v dx-\int \na^{k} v\cdot\na^{k+1}F_1 dx\\
		=&\f{d}{dt}\int \na^{k}v\cdot\na^{k+1}n dx-\gamma\|\na^{k}\dive v\|_{L^2}^2-\int \na^{k+1}\dive v\cdot\na^{k-1}F_1 dx
	\end{split}
	\eeno
	Substituting the identity above into \eqref{vnjc} and integrating by parts yield\beq\label{nvjc2}
	\begin{split}
		&\f{d}{dt}\int \na^{k}v\cdot\na^{k+1}n dx+\|\na^{k+1}n\|_{L^2}^2\\
		\leq& C \|(\na^{k+1}v,\na^{k+2}v)\|_{L^2}^2+C\int \na^{k+1}\dive v\cdot\na^{k-1}F_1dx-C\int \na^kF_2\cdot\na^{k+1}n dx.
	\end{split}
	\eeq
	As for the term of $F_1$, we have
	\beq\label{ss1}
	\begin{split}
		\Big|\int \na^{k+1}\dive v\cdot\na^{k-1}F_1 dx\Big|\leq C\|\na^{k+2}v\|_{L^2}\|\na^{k-1}F_1\|_{L^2}
		\leq C \delta \|\na^{k+1}n\|_{L^2}^2+C\|(\na^{k+1}v,\na^{k+2}v)\|_{L^2}^2,
	\end{split}
	\eeq
where $\|\na^{k-1}F_1\|_{L^2}$ ($k=1,2$) can be controlled in a similar way to the estimates of terms from \eqref{F11} and \eqref{F12} in Lemma \ref{enn-1}.
To deal with the term of $F_2$, we then derive in a similar way in \eqref{F2-estimate1} in Lemma \ref{enn}.
Hence, we give the estimate as follow
	\beq\label{ss2}
	\begin{split}
		\Big|\int \na^kF_2\cdot\na^{k+1}n dx\Big|\leq C \|\na^kF_2\|_{L^2}\|\na^{k+1}n\|_{L^2}\leq C \delta\|(\na^{k+1}n,\na^{k+2}v,\na^{k+2}q)\|_{L^2}^2.
	\end{split}
	\eeq
	We then utilize \eqref{ss1} and \eqref{ss2} in \eqref{nvjc2}, to deduce \eqref{en3} directly.
\end{proof}

\underline{\noindent\textbf{The proof of Proposition \ref{nenp}.}}
With the help of Lemmas \ref{enn-1}-\ref{ennjc},
it is easy to establish the estimate \eqref{eml}.
Therefore, we complete the proof of Proposition \ref{nenp}.

\subsection{Optimal decay of higher order derivative}
In this subsection, we will establish the optimal decay rate for the
second order spatial derivative of global solution.
In order to achieve this goal, the optimal decay rate of higher order spatial derivative will be established by the lower one.
In this aspect, developed by Schonbek(see \cite{Schonbek1985}), the Fourier splitting method is applied frequently to establish the optimal decay rate for higher order derivative of global solution in \cite{{Schonbek-Wiegner},{gao2016},{gao-wei-yao-D}}.
However, we are going to use time weighted energy estimate to solve this problem.

\begin{lemm}\label{N-1decay}
	Under the assumption of Theorem \ref{them3}, for $k=0,1,2$, it holds that
	\beq\label{n1h1}
	\|\na^k(n,v,q)\|_{H^{3-k}}\leq C (1+t)^{-\f34-\f k2},
	\eeq
	where $C$ is a positive constant independent of time.
\end{lemm}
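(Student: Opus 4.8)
The cases $k=0,1$ are already contained in the basic decay estimate \eqref{basic-decay}, so the only new content is the sharp rate for $k=2$, namely $\|\na^2(n,v,q)\|_{H^1}\lesssim (1+t)^{-\f74}$. The obstruction to reading this off directly from Proposition \ref{nenp} is that the dissipation in \eqref{eml} is one derivative stronger than the energy $\mathcal{E}^3_l(t)\sim\|\na^l(n,v,q)\|_{H^{3-l}}^2$ controls, so a naive Gr\"onwall argument closes no decay at all. The plan is a two-tier time-weighted energy scheme in which the already-known lower-order decay is fed into the higher-order estimate, following the idea of \cite{gao2021}.

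First I would fix a small $\ep_0\in(0,1)$ and apply the weight $(1+t)^{\f72+\ep_0}$ to the $l=2$ inequality of Proposition \ref{nenp}. Since $\f{d}{dt}[(1+t)^{\f72+\ep_0}\mathcal{E}^3_2]=(1+t)^{\f72+\ep_0}\f{d}{dt}\mathcal{E}^3_2+(\f72+\ep_0)(1+t)^{\f52+\ep_0}\mathcal{E}^3_2$, and $\mathcal{E}^3_2\sim\|\na^2(n,v,q)\|_{H^1}^2$ by \eqref{emleq}, integrating in time produces exactly \eqref{E32}: the full-weight dissipation $(1+\tau)^{\f72+\ep_0}\big(\|\na^3 n\|_{L^2}^2+\|\na^3(v,q)\|_{H^1}^2\big)$ remains on the left, while the only source term is $\int_0^t(1+\tau)^{\f52+\ep_0}\|\na^2(n,v,q)\|_{H^1}^2\,d\tau$. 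There is no need to absorb the top-order part of the energy into the dissipation here, precisely because the two carry different time weights and the source integral is to be dispatched by the lower tier.

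Next I would run the lighter weight $(1+t)^{\f52+\ep_0}$ on the $l=1$ inequality, where $\mathcal{E}^3_1\sim\|\na(n,v,q)\|_{H^2}^2$. The essential input is the $k=1$ case of \eqref{basic-decay}, giving $\|\na(n,v,q)\|_{H^2}^2\lesssim(1+t)^{-\f52}$. The differentiated weight then yields a source controlled by $\int_0^t(1+\tau)^{\f32+\ep_0}(1+\tau)^{-\f52}\,d\tau=\int_0^t(1+\tau)^{-1+\ep_0}\,d\tau\lesssim(1+t)^{\ep_0}$; the strictly positive $\ep_0$ is exactly what converts this otherwise borderline, logarithmically divergent time integral into a clean power (cf.\ the $r=1$ case in Lemma \ref{tt2}). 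After integration this is \eqref{E31}, and in particular its dissipation integral dominates $\int_0^t(1+\tau)^{\f52+\ep_0}\|\na^2(n,v,q)\|_{H^1}^2\,d\tau\lesssim(1+t)^{\ep_0}$.

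Finally, inserting this bound into the source term of \eqref{E32} gives $(1+t)^{\f72+\ep_0}\mathcal{E}^3_2(t)\lesssim \|\na^2(n_0,v_0,q_0)\|_{H^1}^2+(1+t)^{\ep_0}$, whence $\mathcal{E}^3_2(t)\lesssim(1+t)^{-\f72}$ and therefore $\|\na^2(n,v,q)\|_{H^1}\lesssim(1+t)^{-\f74}$, the asserted rate for $k=2$; note the $\ep_0$ cancels exactly in this last step. I expect the main obstacle to be conceptual rather than computational: recognizing that the regularity gap between the energy and its dissipation forces a hierarchical structure, and calibrating the two weights $\f72+\ep_0$ and $\f52+\ep_0$ so that the lower tier produces precisely the $(1+t)^{\ep_0}$ growth needed to close the upper tier. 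Since the differential inequalities \eqref{eml} are already furnished by Proposition \ref{nenp}, no further delicate estimates of the nonlinearities $F_1,F_2,F_3$ are required in this step.
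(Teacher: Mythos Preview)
Your proposal is correct and matches the paper's proof essentially line for line: the paper also treats $k=0,1$ as given by \eqref{basic-decay}, then runs the time-weighted energy argument on \eqref{eml} with weight $(1+t)^{\f52+\ep_0}$ at $l=1$ (using \eqref{inducass}) to obtain \eqref{energy2}, and feeds the resulting dissipation integral into the $(1+t)^{\f72+\ep_0}$-weighted $l=2$ estimate to conclude $\|\na^2(n,v,q)\|_{H^1}\lesssim(1+t)^{-\f74}$. The only cosmetic difference is that the paper presents the $l=1$ tier before the $l=2$ tier, whereas you present them in the reverse order.
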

\begin{proof}
Actually, the decay rate \eqref{basic-decay} implies \eqref{n1h1} holds true for the the case $k=0, 1$.
That is, the decay rate \eqref{n1h1}
holds on for the case $k=1$, i.e.,
	\beq\label{inducass}
	\|\na (n,v,q)\|_{H^{2}}\leq C (1+t)^{-\f54}.
	\eeq
	It remains the case of $k=2$ to be proven.
We take the integer $l=1$ in the estimate \eqref{eml} and
multiply it by $(1+t)^{\f52+\ep_0} (0<\ep_0<1)$, to discover
\beno
\begin{split}
\f{d}{dt}\Big\{(1+t)^{\f52+\ep_0} \mathcal{E}^3_1(t)\Big\}
+(1+t)^{\f52+\ep_0}\big(\|\na^{2}n\|_{H^{1}}^2
+\|\na^{2}(v,q)\|_{H^{2}}^2\big)\leq C(1+t)^{\f32+\ep_0} \mathcal{E}^3_1(t).
\end{split}
\eeno
Integrating with respect to $t$, using the equivalent relation \eqref{emleq} and the decay estimate \eqref{inducass}, one obtains
\beq\label{energy2}
\begin{split}
&(1+t)^{\f52+\ep_0} \mathcal{E}^3_1(t)	
    +\int_0^t(1+\tau)^{\f52+\ep_0}\big(\|\na^{2}n\|_{H^{1}}^2
    +\|\na^{2}(v,q)\|_{H^{2}}^2\big)d\tau\\
\leq& \mathcal{E}^3_1(0)+C\int_0^t(1+\tau)^{\f32+\ep_0} \mathcal{E}^3_1(\tau)d\tau\\
\leq&C\|\na(n_0,v_0,q_0)\|_{H^{2}}^2
     +C\int_0^t(1+\tau)^{\f32+\ep_0} \|\na(n,v,q)\|_{H^{2}}^2d\tau\\
\leq&C\|\na(n_0,v_0,q_0)\|_{H^{2}}^2
     +C\int_0^t(1+\tau)^{-1+\ep_0}d\tau\leq C(1+t)^{\ep_0}.
\end{split}
\eeq
On the other hand, taking $l=2$ in the estimate \eqref{eml}, we have
\beq\label{Ekk}
\begin{split}	
\f{d}{dt}\mathcal{E}^{3}_{2}(t)
+\|\na^{3}n\|_{L^2}^2+\|\na^{3}(v,q)\|_{H^{1}}^2\leq 0.
\end{split}
\eeq
We then multiply \eqref{Ekk} by $(1+t)^{\f52+m+\ep_0}$,
integrate over $[0, t]$ and use the estimate \eqref{energy2}, to find
\beno
\begin{split}
&(1+t)^{\f72+\ep_0}\mathcal{E}^{3}_{2}(t)
+\int_0^t(1+\tau)^{\f72+\ep_0}\big(\|\na^{3}n\|_{L^{2}}^2
+\|\na^{k+2}(v,q)\|_{H^{1}}^2\big)d\tau\\
\leq& \mathcal{E}^{3}_{2}(0)
+C\int_0^t(1+\tau)^{\f52+\ep_0}\mathcal{E}^{3}_{2}(\tau)d\tau\\
\leq&C\|\na^{2}(n_0,v_0,q_0)\|_{H^{1}}^2
+C\int_0^t(1+\tau)^{\f52+\ep_0}\|\na^{2}(n,v,q)\|_{H^{1}}^2d\tau
    	\leq C(1+t)^{\ep_0}.
\end{split}
\eeno
This, togeter with the equivalent relation \eqref{emleq}, yields immediately
\beno
\|\na^{2}(n,v,q)\|_{H^{1}}\leq C(1+t)^{-\f74}.
\eeno
Then, the decay estimate \eqref{n1h1} holds ture for case of $k=2$.
Therefore, we complete the proof of this lemma.
\end{proof}

\subsection{Optimal decay of critical derivative}

In this subsection, we aim to build the optimal decay rate for the third order spatial derivative of global solution $(n, v,q)$ as it tends to zero.
The decay rate of the third order derivative of global solution $(n, v,q)$ obtained in Lemma \ref{N-1decay}
is not optimal since it is same as that of the second one.
This is caused by the appearance of cross term
$\frac{d}{dt}\int \nabla^{2} v \cdot \nabla^3 n dx$ in energy
when we set up the dissipation estimate for the density in Lemma \ref{ennjc}.
Before giving the proof, we first introduce some notations that will be of frequency use in this subsection.
Let $0\leq\varphi_0(\xi)\leq1$ be a function in $C_0^{\infty}(\mathbb{R}^3)$ such that\begin{equation*}
\begin{split}
\varphi_0(\xi)=\left\{
\begin{array}{ll}
1,\quad \text{for}~~|\xi|\leq \f{\eta}{2},\\[1ex]
0,\quad\text{for}~~|\xi|\geq \eta,   \\[1ex]
\end{array}
\right.
\end{split}
\end{equation*}
where $\eta$ is a fixed positive constant, which will be chosen later. Based on the Fourier transform, we can define a low-medium-high-frequency decomposition $(f^l(x),f^h(x))$ for a function $f(x)$ as follows:
\beq\label{def-h-l}
f^l(x)\overset{def}{=}\mathcal{F}^{-1}(\varphi_0(\xi)\widehat{f}(\xi))~~\text{and}~~f^h(x)\overset{def}{=}f(x)-f^l(x).
\eeq

\begin{lemm}\label{highfrequency}
Under the assumptions of Theorem \ref{them3},
there exists a positive small constant $\eta_2$, such that
\beq\label{en6}
\begin{split}	&\f{d}{dt}\Big\{\|\na^{3}(n,v,q)\|_{L^2}^2-\eta_2\int_{|\xi|\geq\eta}\widehat{\na^{2}v}\cdot \overline{\widehat{\na^{3}n}}d\xi \Big\}+\|\na^{3}(v^h,q^h)\|_{L^2}^2+\eta_2\|\na^{3}n^h\|_{L^2}^2\\
		\leq&  C_2\|\na^{3}(n^l, v^l,q^l)\|_{L^2}^2+C(1+t)^{-6},
\end{split}
\eeq
where $C_2$ is a positive constant independent of time.
\end{lemm}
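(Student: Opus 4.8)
The plan is to obtain \eqref{en6} by adding two differential inequalities. The first is the top–order energy estimate already established in Lemma \ref{enn}, namely $\f{d}{dt}\|\na^3(n,v,q)\|_{L^2}^2+\|\na^4(v,q)\|_{L^2}^2\leq C\delta\|(\na^3 n,\na^4 v,\na^4 q)\|_{L^2}^2$. The second is a Fourier–localized dissipation estimate for the density, which is exactly what the correction term $-\eta_2\int_{|\xi|\geq\eta}\widehat{\na^2 v}\cdot\overline{\widehat{\na^3 n}}\,d\xi$ is designed to produce. The reason to localize to $\{|\xi|\geq\eta\}$ instead of using the full cross term $\int\na^2 v\cdot\na^3 n\,dx$ as in Lemma \ref{ennjc} is that only the high–frequency part of the density carries a clean dissipation; the low–frequency part is deliberately moved to the right–hand side and handled later by Duhamel's principle. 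Throughout, $\eta_2$ will be chosen small relative to $\eta$, so that by Plancherel and the elementary bound $\|\na^2 v^h\|_{L^2}\lesssim\eta^{-1}\|\na^3 v^h\|_{L^2}$ the functional $\|\na^3(n,v,q)\|_{L^2}^2-\eta_2\int_{|\xi|\geq\eta}\widehat{\na^2 v}\cdot\overline{\widehat{\na^3 n}}\,d\xi$ remains comparable to $\|\na^3(n,v,q)\|_{L^2}^2$, as announced in the introduction.

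To generate the density dissipation, I would differentiate the localized cross functional in time and split $\f{d}{dt}\int_{|\xi|\geq\eta}\widehat{\na^2 v}\cdot\overline{\widehat{\na^3 n}}\,d\xi$ into the two contributions coming from $\na^2 v_t$ and $\na^3 n_t$, replacing $\na^2 v_t$ through the momentum equation in \eqref{ns5} and $\na^3 n_t$ through the mass equation in \eqref{ns5}. The leading term, coming from $\gamma\na^3 n$ in the momentum equation, equals $-\gamma\int_{|\xi|\geq\eta}|\widehat{\na^3 n}|^2\,d\xi$; after the sign change induced by the $-\eta_2$ in front, this furnishes the dissipation $\gtrsim\eta_2\|\na^3 n^h\|_{L^2}^2$ on the left, the localization being chosen so that this sharply cut integral controls the smoothly localized $\|\na^3 n^h\|_{L^2}^2$. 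The remaining linear terms — the temperature coupling $\bar\lam\na^3 q$, the viscous contributions $\mu_1\na^2\tri v$ and $\mu_2\na^2\na\dive v$, and the term $\gamma\na^3\dive v$ arising from $n_t$ — are all dispatched by Plancherel and Young's inequality: each is bounded by $\ep\|\na^3 n^h\|_{L^2}^2$ plus a constant multiple of $\|\na^3(v^h,q^h)\|_{L^2}^2$ or $\|\na^4(v,q)\|_{L^2}^2$, quantities that either sit on the left of \eqref{en6} or are supplied as dissipation by Lemma \ref{enn}.

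The nonlinear terms $\int_{|\xi|\geq\eta}\widehat{\na^2 F_2}\cdot\overline{\widehat{\na^3 n}}\,d\xi$ and $\int_{|\xi|\geq\eta}\widehat{\na^2 v}\cdot\overline{\widehat{\na^3 F_1}}\,d\xi$ are the delicate part. For the $F_2$ term I would invoke the estimate $\|\na^2 F_2\|_{L^2}\leq C\delta\|(\na^3 n,\na^4 v,\na^4 q)\|_{L^2}$ from \eqref{F2-estimate1}, so that Young's inequality absorbs it into the dissipation using the smallness of $\delta$. For the $F_1$ term I would first shift one derivative off $F_1$ in frequency space, turning $\na^3 F_1$ into $\na^2 F_1$ and thereby avoiding the unavailable norm $\|\na^4 n\|_{L^2}$. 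The products in $\na^2 F_1$ then separate into three kinds: terms of the form $\delta\times(\text{top-order derivative})$, which are absorbed; terms producing low-order derivatives of $n^l,v^l,q^l$, which are collected into $C_2\|\na^3(n^l,v^l,q^l)\|_{L^2}^2$; and genuinely quadratic interactions such as $\na^2 n\,\dive v$ whose factor $\na^2 n$ is \emph{not} a dissipation quantity for the density (recall only $\na^3 n^h$ is recovered). For this last kind I must resort to the decay rates of Lemma \ref{N-1decay}: since $\|\na^2 n\|_{L^2}\lesssim(1+t)^{-7/4}$ and $\|\na v\|_{H^2}\lesssim(1+t)^{-5/4}$, such a product decays like $(1+t)^{-3}$, and Young's inequality turns its contribution into the harmless remainder $C(1+t)^{-6}$.

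Finally I would add the inequality of Lemma \ref{enn} to $\eta_2$ times the localized density estimate. Using $\|\na^4(v,q)\|_{L^2}^2\geq c\,\eta^2\|\na^3(v^h,q^h)\|_{L^2}^2$ to release the $v,q$ dissipation onto the left, choosing $\eta_2$ small compared with $\eta^2$ to absorb the $\|\na^4(v,q)\|_{L^2}^2$ and $\|\na^3(v^h,q^h)\|_{L^2}^2$ terms generated by the cross estimate, and then choosing $\delta$ small compared with $\eta_2$ so that $C\delta\|\na^3 n^h\|_{L^2}^2$ is swallowed by $c\,\eta_2\|\na^3 n^h\|_{L^2}^2$, every troublesome term on the right collapses and \eqref{en6} follows. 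I expect the main obstacle to be the careful derivation and frequency bookkeeping of the localized cross term: one must check that the sharply cut leading term genuinely controls the smoothly localized $\|\na^3 n^h\|_{L^2}^2$ across the transition band $\eta/2\leq|\xi|\leq\eta$, and, at the same time, keep the $F_1$ contributions clear of the forbidden $\na^4 n$ while extracting from them only absorbable, low-frequency, or fast-decaying pieces.
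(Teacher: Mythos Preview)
Your proposal is correct and follows essentially the same route as the paper: derive a frequency-localized cross estimate on $\{|\xi|\geq\eta\}$ from the momentum equation paired with $\overline{\widehat{\na^3 n}}$, handle the $F_1$ contribution by shifting a derivative and splitting into $\bar\rho$-pieces (absorbed via $\delta$) and genuinely quadratic pieces (dispatched by the decay rates of Lemma~\ref{N-1decay}), then add $\eta_2$ times this to Lemma~\ref{enn} and use $\|\na^4(v,q)\|_{L^2}^2\gtrsim\eta^2\|\na^3(v^h,q^h)\|_{L^2}^2$. One small difference worth noting: for the $F_2$ contribution you invoke \eqref{F2-estimate1} directly, which gives $|M_5|\le C\delta\|(\na^3 n,\na^4 v,\na^4 q)\|_{L^2}^2$ in one stroke; the paper instead decomposes $M_5$ into six pieces $M_{51}$--$M_{56}$ and uses the decay rates of Lemma~\ref{N-1decay} for $M_{51}$ and $M_{52}$ --- your route is cleaner and equally valid, since the resulting $C\delta\|\na^3 n\|_{L^2}^2$ splits into a high-frequency part absorbed on the left and a low-frequency part that joins $C_2\|\na^3(n^l,v^l,q^l)\|_{L^2}^2$ on the right.
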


\begin{proof}
Taking differential operating $\na^{2}$ to the equation \eqref{ns5}, one obtains that
\beq\label{ns6}
\left\{\begin{array}{lr}
\na^{2}n_t +\gamma\na^{2}\dive v=\na^{2}F_1,\\
\na^{2}v_t+\gamma\na^3 n+\bar\lam \na^3 q-\mu_1\na^{2}\tri v-\mu_2\na^3\dive v =\na^{2}F_2,\\
\na^{2}q_t-\bar{\kappa}\na^{2}\tri q+\bar\lam \na^{2}\dive v=\na^{2}F_3.
\end{array}\right.
\eeq
We then take the Fourier transform of $\eqref{ns6}_2$, multiply the resulting equation by $\overline{\widehat{\na^{3}n}}$ and integrate on $\{\xi||\xi|\geq \eta\}$, to discover
\beq\label{f1}
\begin{split}
&\int_{|\xi|\geq\eta}\widehat{\na^{2}v_t}\cdot \overline{\widehat{\na^{3}n}}d\xi+\gamma\int_{|\xi|\geq\eta}|\widehat{\na^3n}|^2d\xi\\
=&\int_{|\xi|\geq\eta}\big(\mu_1\widehat{\na^{2}\tri v}+\mu_2\widehat{\na^3\dive v}\big)\cdot \overline{\widehat{\na^{3}n}}d\xi-\bar\lam\int_{|\xi|\geq\eta}\widehat{\na^{3}q}\cdot \overline{\widehat{\na^{3}n}}d\xi+\int_{|\xi|\geq\eta}\widehat{\na^{2}F_2}\cdot \overline{\widehat{\na^{3}n}}d\xi.
\end{split}
\eeq
It follows from $\eqref{ns6}_1$ that
\beno
\begin{split}
		\widehat{\na^{2}v_t}\cdot \overline{\widehat{\na^{3}n}}=&-i\xi\widehat{\na^{2}v_t}\cdot \overline{\widehat{\na^{2}n}}=-\widehat{\na^3v_t}\cdot \overline{\widehat{\na^{2}n}}\\
		=&-\pa_t(\widehat{\na^3v}\cdot \overline{\widehat{\na^{2}n}})+\widehat{\na^3v}\cdot \overline{\widehat{\na^{2}n_t}}\\
		=&-\pa_t(\widehat{\na^3v}\cdot \overline{\widehat{\na^{2}n}})-\gamma\widehat{\na^3v}\cdot
\overline{\widehat{\na^{2}\dive v}}+\widehat{\na^3v}\cdot \overline{\widehat{\na^{2}F_1}}.
	\end{split}
	\eeno
Then, we substitute this identity into identity \eqref{f1}, to find
\beq\label{f2}
\begin{split}
		&-\f{d}{dt}\int_{|\xi|\geq\eta}\widehat{\na^{3}v}\cdot \overline{\widehat{\na^{2}n}}d\xi+\gamma\int_{|\xi|\geq\eta}|\widehat{\na^3n}|^2d\xi\\
		=&\int_{|\xi|\geq\eta}\big(\mu_1\widehat{\na^{2}\tri v}+\mu_2\widehat{\na^3\dive v}\big)\cdot \overline{\widehat{\na^{3}n}}d\xi-\bar\lam\int_{|\xi|\geq\eta}\widehat{\na^{3}q}\cdot \overline{\widehat{\na^{3}n}}d\xi+\gamma\int_{|\xi|\geq\eta}\widehat{\na^{3}v}\cdot \overline{\widehat{\na^{2}\dive v}}d\xi \\
		&\quad-\int_{|\xi|\geq\eta}\widehat{\na^{3}v}\cdot \overline{\widehat{\na^{2}F_1}}d\xi +\int_{|\xi|\geq\eta}\widehat{\na^{2}F_2}\cdot \overline{\widehat{\na^{3}n}}d\xi\\
		\overset{def}{=}&M_1+M_2+M_3+M_4+M_5.
\end{split}
\eeq
The application of Cauchy inequality implies
\beq\label{N1}
\begin{split}
|M_1|
\leq C\int_{|\xi|\geq\eta}|\xi|^{7}|\widehat{v}||\widehat{n}|d\xi
\leq\ep \int_{|\xi|\geq\eta}|\xi|^{6}|\widehat{n}|^2d\xi
        +C_{\ep}\int_{|\xi|\geq\eta}|\xi|^{8}|\widehat{v}|^2d\xi,
\end{split}
\eeq
for some small constant $\ep$, which will be determined later.
It then follows from a similar way that\beq\label{N2}
\begin{split}
	|M_2|\leq C\int_{|\xi|\geq\eta}|\xi|^{6}|\widehat{q}||\widehat{n}|d\xi
	\leq\ep \int_{|\xi|\geq\eta}|\xi|^{6}|\widehat{n}|^2d\xi
	+C_{\ep}\int_{|\xi|\geq\eta}|\xi|^{6}|\widehat{q}|^2d\xi.
\end{split}
\eeq
Obviously, it holds true
\beq\label{N3}
|M_3| \leq  C\int_{|\xi|\geq\eta}|\xi|^{6}|\widehat{v}|^2d\xi.
\eeq
Using the Cauchy inequality and definition of $F_1$, one can show that
\beq\label{m3}
\begin{split}
|M_4|
\leq&C \int_{|\xi|\geq\eta}|\xi|^{8}|\widehat{v}|^2d\xi
       +C\int_{|\xi|\geq\eta}|\xi|^{2}|\widehat{F_1}|^2 d\xi \\
\leq&C \int_{|\xi|\geq\eta}|\xi|^{8}|\widehat{v}|^2d\xi
       +C\int_{|\xi|\geq\eta}|\xi|^{2}|\widehat{\na nv+n\na v}|^2d\xi+C\int_{|\xi|\geq\eta}|\xi|^{2}|\widehat{\na\bar\rho v+\bar\rho\na v}|^2d\xi\\
\overset{def}{=}&\int_{|\xi|\geq\eta}|\xi|^{8}|\widehat{v}|^2d\xi+M_{41}+M_{42}.
\end{split}
\eeq
The Plancherel Theorem  and Sobolev inequality yields directly
\beq\label{m31}
\begin{split}
M_{41}
\le  &C\|\na(\na nv+n\na v)\|_{L^2}^2\\
\leq &C\big(\|\na n\|_{L^\infty}^2\|\na v\|_{L^2}^2
      +\|\na^2 n\|_{L^2}^2\|v\|_{L^\infty}^2
      +\|n\|_{L^\infty}^2\|\na^{2}v\|_{L^2}^2\big)\\		
\leq&C\big(\|\na^{2}n\|_{H^1}^2\|\na v\|_{L^2}^2
      +\|\na(n,v)\|_{H^1}^2\|\na^{2}v\|_{L^2}^2\big)\\
\leq&C(1+t)^{-6},
\end{split}
\eeq
where we have used the decay \eqref{n1h1} in the last inequality.
We then apply Hardy inequality to obtain
\beq\label{m32}
\begin{split}
M_{42}
\leq &C\int_{|\xi|\geq\eta}|\xi|^{4}|\widehat{\na\bar\rho v+\bar\rho\na v}|^2d\xi
\le  C\|\na^{2}(\na \bar\rho v+\bar\rho\na v)\|_{L^2}^2 \\
		\leq&C\sum_{0\leq l\leq 2}\Big(\|(1+|x|)^{l+1}\na^{l+1}\bar\rho\|_{L^\infty}\|\f{\na^{2-l}v}{(1+|x|)^{l+1}}\|_{L^2}+\|(1+|x|)^{l}\na^{l}\bar\rho\|_{L^\infty}\|\f{\na^{3-l}v}{(1+|x|)^{l}}\|_{L^2}\Big)\|\na^{4}v\|_{L^2}\\
		\leq&C\delta\|(\na^{3}v,\na^{4}v)\|_{L^2}^2,
\end{split}
\eeq
where we have used the fact that for any suitable function $\phi$,
there exists a positive constant $C$ depending only on $\eta$ such that
\beno
\int_{|\xi|\geq\eta}|\xi|^{2}|\widehat{\phi}|^2d\xi\leq C\int_{|\xi|\geq\eta}|\xi|^{4}|\widehat{\phi}|^2d\xi.
\eeno
Substituting the estimates \eqref{m31} and \eqref{m32} into \eqref{m3},
it is easy to check that
\beq\label{N4}
|M_4|\leq C\delta\|(\na^{3}v,\na^{4}v)\|_{L^2}^2 +C(1+t)^{-6}.
\eeq
Applying the definition of $F_2$ and Cauchy inequality, one can get that
\beq\label{m4}
\begin{split}
|M_5|
\leq &C\int_{|\xi|\geq\eta}|\xi|^{5}|\widehat{F_2}|| {\widehat{n}}|d\xi\\
		\leq&C\int_{|\xi|\geq\eta}|\xi|^{5}| {\widehat{n}}||\widehat{v\cdot \na v}|d\xi
     +C\int_{|\xi|\geq\eta}|\xi|^{5}| {\widehat{n}}
        ||\widehat{n(\tri v+\na\dive v)}|d\xi+C\int_{|\xi|\geq\eta}|\xi|^{5}| {\widehat{n}}|
          |\widehat{(n+q)\na (n+q)}|d\xi\\
          &
      +C\int_{|\xi|\geq\eta}|\xi|^{5}| {\widehat{n}}|
          |\widehat{\bar\rho(\tri v+\na\dive v)}|d\xi+C\int_{|\xi|\geq\eta}|\xi|^{5}| {n}|
          |\widehat{{\bar\rho}\na(n+q)}|d\xi+C\int_{|\xi|\geq\eta}|\xi|^{5}| {\widehat{n}}|
          |\widehat{(n+q)\na\bar\rho}|d\xi\\
\overset{def}{=}&M_{51}+M_{52}+M_{53}+M_{54}+M_{55}+M_{56}.
\end{split}
\eeq
By virtue of Plancherel Theorem, Sobolev inequality,
commutator estimate in Lemma \ref{commutator}, and the estimate \eqref{n1h1}, we obtain
\beq\label{M41}
\begin{split}
M_{51}
\leq &\ep \|\na^3n\|_{L^2}^2+C_{\ep}\|\na^{2}(v\cdot\na v)\|_{L^2}^2\\
\leq &\ep \|\na^3n\|_{L^2}^2+C_{\ep}\|v\|_{L^\infty}^2\|\na^3v\|_{L^2}^2
      +C_{\ep}\|[\na^{2},v]\cdot\na v\|_{L^2}^2\\
\leq &\ep \|\na^3n\|_{L^2}^2+C_{\ep}\|\na v\|_{H^1}^2\|\na^3v\|_{L^2}^2
      +C_{\ep}\|\na v\|_{L^\infty}^2\|\na^{2} v\|_{L^2}^2\\
\leq &\ep \|\na^3n\|_{L^2}^2+C_{\ep}(1+t)^{-6}.
\end{split}
\eeq
Similarly, it also holds that
\beq\label{m42}
\begin{split}
M_{52}
\leq &\ep \|\na^3n\|_{L^2}^2+C_{\ep}\|\na^{2}\big(n
        (\tri v+\na\dive v)\big)\|_{L^2}^2\\
\leq &\ep\|\na^3n\|_{L^2}^2+C_{\ep}\|n\|_{L^\infty}^2
      \|\na^{4}v\|_{L^2}^2
      +C_{\ep}\|\na^2n\|_{L^3}^2 \|\na^2v\|_{L^6}^2\\
\leq &\ep \|\na^3n\|_{L^2}^2+C_{\ep}\delta\|\na^{4}v\|_{L^2}^2
      +C_{\ep}(1+t)^{-7}.
\end{split}
\eeq
One can deal with the term $M_{53}$ in the manner of $M_{52}$. It holds true
\beq\label{m43}
\begin{split}
	M_{53}
	\leq&\ep \|\na^3n\|_{L^2}^2+C_{\ep}\|(n,q)\|_{L^\infty}^2\|\na^{3}(n,q)\|_{L^2}^2+C_{\ep}\|\na(n,q)\|_{L^3}^2\|\na^{2}(n,q)\|_{L^6}^2\\
	\leq&(\ep+C_\ep\delta) \|\na^3(n,q)\|_{L^2}^2.
\end{split}
\eeq
As for $M_{54}$, thanks to H\"older and Hardy inequalities, we find
\beq\label{m44}
\begin{split}
	M_{54}
	\leq&\ep \|\na^3n\|_{L^2}^2+C_{\ep}\|\bar\rho\|_{L^\infty}^2\|\na^{4}v\|_{L^2}^2+C_{\ep}\|\na^2\bar\rho\|_{L^3}^2\|\na^{2}v\|_{L^6}^2\\
	\leq&\ep \|\na^3n\|_{L^2}^2+C_\ep\delta\|(\na^3v,\na^4v)\|_{L^2}^2.
\end{split}
\eeq
Finally, let us deal with the term $M_{55}$ and $M_{56}$ together.
Indeed, the Hardy inequality yields directly
\beno
\begin{split}
M_{55}+M_{56}		\leq&\ep\|\na^{3}n\|_{L^2}^2+C_{\ep}\|\bar\rho\|_{L^\infty}^2\|\na^3(n,q)\|_{L^2}^2+C_{\ep}\|(1+|x|)\na\bar\rho\|_{L^\infty}^2\|\f{\na^2(n,q)}{1+|x|}\|_{L^2}^2\\
	&\quad+C_{\ep}\|(1+|x|)^2\na^2\bar\rho\|_{L^\infty}^2\|\f{\na(n,q)}{(1+|x|)^2}\|_{L^2}^2+C_{\ep}\|(1+|x|)^3\na^3\bar\rho\|_{L^\infty}^2\|\f{(n,q)}{(1+|x|)^3}\|_{L^2}^2\\
	\leq&(\ep+C_{\ep}\d)\|\na^{3}n\|_{L^2}^2+C_{\ep}\d\|\na^{3}q\|_{L^2}^2.
\end{split}
\eeno
This bound, together with estimates \eqref{m4}-\eqref{m44}, leads us to get
\beq\label{N5}
|M_{5}|\leq (\ep+C_{\ep}\d)\|\na^{3}n\|_{L^2}^2+C_{\ep}\d\|(\na^3 v,\na^4 v,\na^{3}q)\|_{L^2}^2+C_{\ep}(1+t)^{-6}.
\eeq
Substituting the estimates \eqref{N1}-\eqref{N3}, \eqref{N4} and \eqref{N5} into \eqref{f2}, we find
\beno
\begin{split}
&-\f {d}{dt}\int_{|\xi|\geq\eta}\widehat{\na^{3}v}\cdot \overline{\widehat{\na^{2}n}}d\xi+\gamma\int_{|\xi|\geq\eta}|\widehat{\na^3n}|^2d\xi\\
&\leq(\ep+C_{\ep}\delta)\|\na^3 n\|_{L^2}^2+C_{\ep}\d\|(\na^3 v,\na^4 v,\na^{3}q)\|_{L^2}^2+C_{\ep}(1+t)^{-6}.
	\end{split}
	\eeno
Recalling the definition \eqref{def-h-l}, there exists a positive constant $C$ such that
	\beq\label{vhvl}
	\begin{split}
		\|\na^3 v^h\|_{L^2}^2\leq C\|\na^{4}v^h\|_{L^2}^2,\quad \|\na^{4} v^l\|_{L^2}^2\leq C \|\na^{3}v^l\|_{L^2}^2,
	\end{split}
	\eeq
	and choosing $\ep$ and $\delta$ suitably small, we deduce that
	\beq\label{en5}
	\begin{split}
		&-\f{d}{dt}\int_{|\xi|\geq\eta}\widehat{\na^{3}v}\cdot \overline{\widehat{\na^{2}n}}d\xi+\gamma\int_{|\xi|\geq\eta}|\widehat{\na^3n}|^2d\xi
		\leq C\|\na^3 (n^l,v^l,q^l)\|_{L^2}^2+C_{3} \|\na^{4}v^h\|_{L^2}^2+C(1+t)^{-6}.
	\end{split}
	\eeq
Recalling the estimate \eqref{en2} in Lemma \ref{enn}, the following estimate holds ture
\beq\label{en4}
\f{d}{dt}\|\na^{3}(n,v,q)\|_{L^2}^2+\|\na^{4}(v,q)\|_{L^2}^2\leq C\delta \|(\na^{3}n,\na^{4}v,\na^4q)\|_{L^2}^2.
\eeq
We multiply \eqref{en5} by $\eta_2$, then add to \eqref{en4}, and choose $\delta$ and $\eta_2$ suitably small, to discover
\begin{equation*}
\begin{aligned}		\f{d}{dt}\Big\{\|\na^{3}(n,v,q)\|_{L^2}^2-\eta_2\int_{|\xi|\geq\eta}\widehat{\na^{2}v}\cdot \overline{\widehat{\na^{3}n}}d\xi \Big\}
+\|\na^{4}(v,q)\|_{L^2}^2+\eta_2\|\na^{3}n^h\|_{L^2}^2
\leq C_2\|\na^{3}(n^l,v^l,q^l)\|_{L^2}^2+C(1+t)^{-6}.
\end{aligned}
\end{equation*}
Using \eqref{vhvl} once again, we obtain that
\begin{equation*}
\begin{aligned}	
\f{d}{dt}\Big\{\|\na^{3}(n,v,q)\|_{L^2}^2
  -\eta_2\int_{|\xi|\geq\eta}\widehat{\na^{2}v}\cdot
  \overline{\widehat{\na^{3}n}}d\xi \Big\}
  +\|\na^{3}(v^h,q^h)\|_{L^2}^2+\eta_2\|\na^{3}n^h\|_{L^2}^2
\leq C_2\|\na^{3}(n^l, v^l,q^l)\|_{L^2}^2+C(1+t)^{-6}.
\end{aligned}
\end{equation*}
Thus, the proof of this lemma is completed.
\end{proof}

It is noted that the low frequency of $\na^3(n,v,q)$ in the right handside of the estimate \eqref{en6} in Lemma \ref{highfrequency} need to be handled.
For this purpose, we first analyze the initial value problem for the linearized system of \eqref{ns5}:
\beq\label{linear}
\left\{\begin{array}{lr}
\widetilde n_t +\gamma\dive\widetilde v=0,\quad (t,x)\in \mathbb{R}^{+}\times \mathbb{R}^3,\\
\widetilde u_t+\gamma\na \widetilde n+\bar\lam\na^3\widetilde q-\mu_1\tri \widetilde v-\mu_2\na\dive \widetilde v =0,\quad (t,x)\in \mathbb{R}^{+}\times \mathbb{R}^3,\\
\widetilde q_t-\bar{\kappa}\tri \widetilde q+\bar\lam \dive \widetilde v=0,\quad (t,x)\in \mathbb{R}^{+}\times \mathbb{R}^3,\\
(\widetilde n,\widetilde v,\widetilde q)|_{t=0}=(n_0,v_0,q_0),\quad x\in \mathbb{R}^3.
\end{array}\right.
\eeq
    In terms of the semigroup theory for evolutionary equations, one can represent the solution $(\widetilde n,\widetilde v,\widetilde q)$ of the linearized system \eqref{linear} as follows:
    \beq\label{U}
    \left\{\begin{array}{lr}
    \widetilde U_t=A\widetilde U,\quad t\geq0,\\
    \widetilde U(0)=U_0,
\end{array}\right.\eeq
    where $\widetilde U \overset{def}{=}(\widetilde{n},\widetilde{v},\widetilde q)^t$,
    $U_0 \overset{def}=(n_0,v_0,q_0)^t$
    and the matrix-valued differential operator $A$ is given by
    \beno
    A = {\left(
	\begin{matrix}
		0 & -\gamma \dive &0\\
		-\gamma \na & \mu_1\tri+\mu_2\na\dive&-\bar\lam\na\\
		0& -\bar\lam \dive& \bar\kappa\tri
	\end{matrix}
	\right).}
    \eeno
    We then denote $S(t)\overset{def} =e^{tA}$, and recall the system \eqref{U}, to find
    \beq\label{uexpress}\widetilde U(t)=S(t)U_0=e^{tA} U_0,\quad t\geq0. \eeq
    Then, it is easy to deduce that the following estimate holds
    \beq\label{linearlow}
    \|\na^3(S(t)U_0)\|_{L^2}\leq C(1+t)^{-\f94}\|U_0\|_{L^1\cap H^N},
    \eeq
    where $C$ is a positive constant independent of time.
    The proof of the estimate \eqref{linearlow} can be found in \cite{chen2021,duan2007}, so we omit here.
    Finally, let us denote $F(t)=(F_1(t),F_2(t),F_3(t))^{t}$, then
    the system \eqref{ns5} can be rewritten as follows:
    \beq\label{nonlinear}
    \left\{\begin{array}{lr}
    	U_t=A U+F,\\
    	U(0)=U_0.
    \end{array}\right.
    \eeq
    In term of the semigroup and Duhamel's principle, the solution of system \eqref{ns5} can be expressed as
    \beq\label{Uexpress}
    U(t)=S(t)U_0+\int_0^tS(t-\tau)F(\tau)d\tau.
    \eeq
    Now, one can establish the following estimate for the low frequency of $\na^3(n,v,q)$ as follows:
\begin{lemm}\label{lowfrequency}
Under the assumption of Theorem \ref{them3}, it holds that
\beq\label{lowfre}
\|\na^3(n^l, v^l,q^l)(t)\|_{L^2}
\leq C \delta \sup_{0\leq s\leq t}\|\na^3(n,v,q)(s)\|_{L^2}+C(1+t)^{-\f94},
\eeq
where $C$ is a positive constant independent of time.
\end{lemm}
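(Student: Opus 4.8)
The plan is to start from the Duhamel representation \eqref{Uexpress}, apply the low-frequency cut-off together with the operator $\na^3$, and treat the linear flow and the nonlinear integral separately. Since the multiplier $\varphi_0$ commutes with the constant-coefficient semigroup $S(t)=e^{tA}$, one may write
\beno
\na^3(n^l,v^l,q^l)(t)=\na^3\big(S(t)U_0\big)^l+\int_0^t\na^3\big(S(t-\tau)F(\tau)\big)^l\,d\tau .
\eeno
For the first term I would invoke the linear low-frequency decay \eqref{linearlow}, which already delivers the target rate $C(1+t)^{-\f94}\|U_0\|_{L^1\cap H^3}$. The whole difficulty thus lies in the nonlinear convolution.

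For the nonlinear integral the key analytic input is the low-frequency analogue of \eqref{linearlow} applied to the integrand, namely $\|\na^3(S(t-\tau)F(\tau))^l\|_{L^2}\le C(1+t-\tau)^{-\f94}\|F(\tau)\|_{L^1}$, which follows from the same spectral analysis: on $\{|\xi|\le\eta\}$ the symbol of $S$ is dominated by a heat kernel $e^{-c|\xi|^2(t-\tau)}$ and $\int_{|\xi|\le\eta}|\xi|^{6}e^{-2c|\xi|^2(t-\tau)}d\xi\lesssim(1+t-\tau)^{-\f92}$. It then remains to bound $\|F(\tau)\|_{L^1}$ and to integrate in time via Lemma \ref{tt2}. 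I would split $F=(F_1,F_2,F_3)$ according to \eqref{F-defi} into its genuinely nonlinear part (products of two perturbation quantities) and its stationary-coupling part (the terms carrying a factor of $\bar\rho$). For each genuinely nonlinear product I apply $\|fg\|_{L^1}\le\|f\|_{L^2}\|g\|_{L^2}$ together with the decay rates of Lemma \ref{N-1decay} for $k=0,1,2$; the products in which both factors carry a derivative decay fast enough that, after $\int_0^t(1+t-\tau)^{-\f94}\|F\|_{L^1}\,d\tau$ and Lemma \ref{tt2}, they feed into the $C(1+t)^{-\f94}$ term, while the slowest products (where one factor is undifferentiated) are handled by exploiting the $\delta$-smallness of that factor and the convergent integral $\int_0^t(1+t-\tau)^{-\f94}\,d\tau\le C$, producing a remainder of size $C\delta\sup_{0\le s\le t}\|\na^3(n,v,q)(s)\|_{L^2}$.

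The main obstacle, which is precisely the feature absent in the force-free setting, is the stationary-coupling part, where $\bar\rho$ (or a derivative of it) multiplies a possibly differentiated perturbation. Here the crucial device is to pair $\bar\rho$ and its derivatives with the weighted bounds \eqref{density-control} and to absorb the spatial weight into one extra derivative of the perturbation through the Hardy inequality, Lemma \ref{hardy}. For instance a term such as $\bar\rho\,\na^2 v$ is estimated by $\|(1+|x|)\bar\rho\|_{L^2}\,\big\|\f{\na^2 v}{1+|x|}\big\|_{L^2}\le C\delta\|\na^3 v\|_{L^2}$; by keeping this top-order norm floating rather than inserting its not-yet-optimal decay, and again using $\int_0^t(1+t-\tau)^{-\f94}\,d\tau\le C$, one obtains a contribution of exactly the form $C\delta\sup_{0\le s\le t}\|\na^3(n,v,q)(s)\|_{L^2}$. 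The delicate point I expect to be hardest is to verify that every stationary-coupling term admits enough spatial weight in \eqref{density-control} to run this Hardy argument up to third order, and that all purely perturbative remainders can be organized so that they decay at least like $(1+t)^{-\f94}$ after the time integration; collecting the two types of contributions then yields \eqref{lowfre}. It is the smallness of $\delta$ that will later permit the floating term $C\delta\sup_{0\le s\le t}\|\na^3(n,v,q)(s)\|_{L^2}$ to be absorbed once \eqref{lowfre} is combined with the high-frequency estimate \eqref{en6}.
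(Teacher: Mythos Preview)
Your overall skeleton (Duhamel, linear piece via \eqref{linearlow}, nonlinear piece via low-frequency heat bounds) matches the paper, but the treatment of the nonlinear integral has a genuine gap. You propose to use the single bound
\[
\big\|\na^3\big(S(t-\tau)F(\tau)\big)^l\big\|_{L^2}\le C(1+t-\tau)^{-\f94}\|F(\tau)\|_{L^1}
\]
on the whole interval $[0,t]$. This cannot close. First, $F$ contains at most second-order derivatives of $(n,v,q)$, so $\|F\|_{L^1}$ can never be controlled by $C\delta\,\|\na^3(n,v,q)\|_{L^2}$; your claimed ``floating'' remainder is therefore not obtainable from $\|F\|_{L^1}$. (Your Hardy example is also off: \eqref{density-control} gives $\|(1+|x|)^k\na^k\bar\rho\|_{L^p}\le\delta$ only with \emph{matching} order, so $\|(1+|x|)\bar\rho\|_{L^2}$ is not available.) Second, for the genuinely nonlinear terms such as $v\cdot\na n$ or $n\,\dive v$ one has at best $\|F\|_{L^1}\lesssim(1+\tau)^{-2}$, and Lemma~\ref{tt2} then yields only $\int_{t/2}^t(1+t-\tau)^{-\f94}(1+\tau)^{-2}\,d\tau\lesssim(1+t)^{-2}$, which is slower than the target $(1+t)^{-\f94}$. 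For the stationary-coupling terms like $\bar\rho\,\dive v$ the situation is even worse: $\|F\|_{L^1}\lesssim\delta(1+\tau)^{-\f54}$ and the $[t/2,t]$ piece gives only $(1+t)^{-\f54}$.

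The missing idea, which the paper uses, is to split the time integral at $t/2$ and on $[t/2,t]$ shift two frequency factors from the semigroup onto $F$:
\[
\int_{t/2}^t\big\||\xi|^3\widehat S(t-\tau)\widehat F\big\|_{L^2(|\xi|\le\eta)}d\tau
\le \int_{t/2}^t\big\||\xi|\widehat S(t-\tau)\big\|_{L^2(|\xi|\le\eta)}\big\||\xi|^2\widehat F\big\|_{L^\infty(|\xi|\le\eta)}d\tau
\lesssim \int_{t/2}^t(1+t-\tau)^{-\f54}\|\na^2 F\|_{L^1}\,d\tau.
\]
Now $\na^2 F$ genuinely contains third-order derivatives of $(n,v,q)$, so the Hardy argument with \eqref{density-control} legitimately produces the term $C\delta\,\|\na^3(n,v,q)\|_{L^2}$, and the remaining products decay like $(1+\tau)^{-\f52}$; both integrate against the now-integrable kernel $(1+t-\tau)^{-\f54}$ to give exactly the two pieces in \eqref{lowfre}. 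On $[0,t/2]$ your original bound with $\|F\|_{L^1}\lesssim(1+\tau)^{-\f54}$ is fine. Without this derivative shift on the late interval, the argument does not reach either the correct decay rate or the correct floating norm.
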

\begin{proof}
The formula \eqref{Uexpress} yields directly
\beno
   \na^3(n, v,q)=\na^3[S (t)U_0]+\int_0^t\na^{3}[S(t-\tau)F(\tau)]d\tau.
\eeno
This implies that
\beq\label{nvexpress}
	\|\na^3(n^l, v^l,q^l)\|_{L^2}
    \leq \|\na^3(S(t)U_0)^l\|_{L^2} +\int_0^t\|\na^{3}[S(t-\tau)F(\tau)]^l\|_{L^2}d\tau.
\eeq
Since the initial data $U_0=(n_0,v_0,q_0)\in L^1\cap H^3$, it follows from the estimate \eqref{linearlow} that
\beq\label{U0es}
	\|\na^3[S (t)U_0]^l\|_{L^2}\leq C(1+t)^{-\f94}\|U_0\|_{L^1\cap H^3}.
\eeq
We then apply Sobolev inequality to obtain that\beq\label{nvl}
\begin{split}
	&\int_0^t\|\na^{3}[S(t-\tau)F(\tau)]^l\|_{L^2}d\tau
	\leq\int_0^t\||\xi|^{3}|\widehat{S}(t-\tau)||\widehat F(\tau)|\|_{L^2(|\xi|\leq \eta)}d\tau\\
	\leq&\int_0^{\f t2}\||\xi|^{3}|\widehat{S}(t-\tau)|\|_{L^2(|\xi|\leq \eta)}\|\widehat F(\tau)\|_{L^\infty(|\xi|\leq \eta)}d\tau+\int_{\f t2}^t\||\xi||\widehat{S}(t-\tau)|\|_{L^2(|\xi|\leq \eta)}\||\xi|^{2}\widehat F(\tau)\|_{L^\infty(|\xi|\leq \eta)}d\tau\\
	\leq&\int_0^{\f t2}(1+t-\tau)^{-\f94}\|\widehat F(\tau)\|_{L^\infty(|\xi|\leq \eta)}d\tau+\int_{\f t2}^t(1+t-\tau)^{-\f54}\||\xi|^{2}\widehat F(\tau)\|_{L^\infty(|\xi|\leq \eta)}d\tau\\
	\overset{def}{=}&N_1+N_2.
\end{split}
\eeq
Now the first term on the right handside of \eqref{nvl} can be estimated as follows:
\beq\label{ss1N}
\begin{split}
N_1=
\int_0^{\f t2}(1+t-\tau)^{-\f94}\|F\|_{L^1}d\tau
\leq C\int_0^{\f t2}(1+t-\tau)^{-\f94} \big(\|F_1\|_{L^1}+\|F_2\|_{L^1}+\|F_3\|_{L^1}\big)d\tau.
\end{split}
\eeq
We compute by the definitions of $F_i(i=1,2,3)$ and decay estimate \eqref{n1h1} that
\beno\begin{split}
\|F_1\|_{L^1}\leq& C\|\na (n,v)\|_{L^2}\|(n,v)\|_{L^2}+C\|(1+|x|)\na \bar\rho\|_{L^2}\|\f{v}{1+|x|}\|_{L^2}+C\|\bar\rho\|_{L^2}\|\na v\|_{L^2}\leq C\delta (1+t)^{-\f54},\\
\|F_2\|_{L^1}\leq& C\|v\|_{L^2}\|\na v\|_{L^2}+C\|n+\bar\rho\|_{L^2}\|\na^2 v\|_{L^2}+C\|n+q\|_{L^2}\|\na (n+q)\|_{L^2}\\
&\quad+C\|\f{(n+q)}{1+|x|}\|_{L^2}\|(1+|x|)\na \bar\rho\|_{L^2}+C\|\na(n+q)\|_{L^2}\|\bar\rho\|_{L^2}\\
\leq&C\delta (1+t)^{-\f54},\\
\|F_3\|_{L^1}\leq& C\|v\|_{L^2}\|\na q\|_{L^2}+C\|(n+\bar\rho)\|_{L^2}\|\na^2 q\|_{L^2}+C\|(n+q)\|_{L^2}\|\na v\|_{L^2}+C(\|n\|_{L^\infty}+1)\|\na v\|_{L^2}\|\na v\|_{L^2}\\
&\quad+C\|\bar\rho\|_{L^2}\|\na v\|_{L^2}+C\|\bar\rho\|_{L^\infty}\|\na v\|_{L^2}\|\na v\|_{L^2}\\
\leq&C\delta (1+t)^{-\f54}.
\end{split}
\eeno
Substituting three estimates above into \eqref{ss1N},
and using the estimate in Lemma \ref{tt2}, it holds that
\beq\label{F1}
	\begin{split}
		N_1\leq C \int_0^{\f t2}(1+t-\tau)^{-\f94}(1+\tau)^{-\f54}d\tau\leq C (1+t)^{-\f94}.
\end{split}
\eeq
Next, let us deal with the $N_2$ term. It follows directly
\beq\label{L}
\begin{split}
	\||\xi|^{2}\widehat F\|_{L^\infty(|\xi|\leq \eta)}
	\leq C \|[\na^{2}F_1]^l\|_{L^1}+C\||\xi|^{2}\widehat F_2\|_{L^\infty(|\xi|\leq \eta)}+C\||\xi|^{2}\widehat F_3\|_{L^\infty(|\xi|\leq \eta)}.
\end{split}
\eeq
First of all, applying the decay estimate \eqref{n1h1} and Hardy inequality, then the first term in the right handside of \eqref{L} can be estimated as follows
\beq\label{L1}
\begin{split}
	\|[\na^{2}F_1]^l\|_{L^1}
	\leq&C\|[\na^{2}(\na nv+n\na v)]^l\|_{L^1}+C\|[\na^{2}(\na \bar\rho v+\bar\rho\na v)]^l\|_{L^1}\\
	\leq& C\sum_{l=0}^{2}\big(\|\na^{l+1}n\|_{L^2}\|\na^{2-l}v\|_{L^2}+\|\na^{l}n\|_{L^2}\|\na^{3-l}v\|_{L^2}\\
	&\quad+C\|(1+|x|)^{l+1}\na^{l+1}\bar\rho\|_{L^2}\|\f{\na^{2-1}v}{(1+|x|)^{l+1}}\|_{L^2}+\|(1+|x|)^{l}\na^{l}\bar\rho\|_{L^2}\|\f{\na^{3-l}v}{(1+|x|)^{l}}\|_{L^2} \Big)\\
	\leq& C (1+t)^{-\f52}+C \delta\|\na^3 v\|_{L^2}.
\end{split}
\eeq
For any smooth function $\phi$, there exists a positive constant $C$
depending only on $\eta$, such that
$$\||\xi|^{2}\widehat \phi\|_{L^\infty(|\xi|\leq \eta)}\leq C\||\xi|\widehat \phi\|_{L^\infty(|\xi|\leq \eta)},$$
by virtue of the decay estimate \eqref{n1h1} and Hardy inequality, then we find that \beq\label{L222}
\begin{split}
	\||\xi|^{2}\widehat F_2\|_{L^\infty(|\xi|\leq \eta)}\leq&C\|[\na^{2}\big(v\cdot\na v+(n+q)\na(n+q)+\bar\rho\na(n+q)+(n+q)\na\bar\rho\big)]^l\|_{L^1}\\
	&\quad+C\|[\na\big((n+\bar\rho)(\tri v+\na\dive v)\big)]^l\|_{L^1} \\
	\leq&\sum_{l=0}^{2}\big(\|\na^{l}v\|_{L^2}\|\na^{3-l}v\|_{L^2}+\|\na^{l}(n+q)\|_{L^2}\|\na^{3-l}(n+q)\|_{L^2}\\
	&\quad+\|(1+|x|)^{l}\na^{l}\bar\rho\|_{L^2}\|\f{\na^{3-l}(n+q)}{(1+|x|)^{l}}\|_{L^2}+\|(1+|x|)^{l+1}\na^{l+1}\bar\rho\|_{L^2}\|\f{\na^{2-l}(n+q)}{(1+|x|)^{l+1}}\|_{L^2} \big)\\
	&\quad+\sum_{l=0,1}\big( \|\na^{l}n\|_{L^2}\|\na^{3-l}v\|_{L^2}+C\|(1+|x|)^{l}\na^{l}\bar\rho\|_{L^2}\|\f{\na^{3-l}v}{(1+|x|)^{l}}\|_{L^2}\big)\\
	\leq&C(1+t)^{-\f52}+C \delta\|\na^3 (n,v,q)\|_{L^2}.
\end{split}
\eeq
In view of the decay estimate \eqref{n1h1} and Hardy inequality, we also have
\beq\label{LL3}
\begin{split}
		\||\xi|^{2}\widehat F_3\|_{L^\infty(|\xi|\leq \eta)}\leq&C\|[\na^{2}\big(v\cdot\na q+(n+q)\dive v+n\Psi(v)+\Psi(v)+\bar\rho\dive v+\bar\rho\Psi(v)\big)]^l\|_{L^1}\\
		&\quad+C\|[\na\big((n+\bar\rho)\tri q\big)]^l\|_{L^1}\\
		\leq& C\sum_{l=0}^{2}\big(\|\na^{l}v\|_{L^2}\|\na^{3-l}q\|_{L^2}+\|\na^{l}(n+q)\|_{L^2}\|\na^{3-l}v\|_{L^2}+\|\na^{l+1}v\|_{L^2}\|\na^{3-l}v\|_{L^2}\\
		&\quad+\|\na^{l}n\|_{L^2}\sum_{j=0}^{2-l}\|\na^{j+1} v\|_{L^3}\|\na^{3-l-j} v\|_{L^6}+\|(1+|x|)^{l}\na^{l}\bar\rho\|_{L^2}\|\f{\na^{3-l}v}{(1+|x|)^{l}}\|_{L^2}\\
		&\quad+\|(1+|x|)^{l}\na^{l}\bar\rho\|_{L^\infty}\sum_{j=0}^{2-l}\|\na^{j+1} v\|_{L^2}\|\f{\na^{3-l-j} v}{(1+|x|)^{l}}\|_{L^2}\big)\\
		&\quad+\sum_{l=0,1}\big( \|\na^{l}n\|_{L^2}\|\na^{3-l}q\|_{L^2}+C\|(1+|x|)^{l}\na^{l}\bar\rho\|_{L^2}\|\f{\na^{3-l}q}{(1+|x|)^{l}}\|_{L^2}\big)\\
		\leq&C (1+t)^{-\f52}+C \delta\|\na^3 (v,q)\|_{L^2}.
	\end{split}
	\eeq
We then conclude from \eqref{L}-\eqref{LL3} that\beno
\begin{split}
	\||\xi|^{2}\widehat F\|_{L^\infty(|\xi|\leq \eta)}\leq&C\delta\|\na^3 (n,v,q)\|_{L^2}+ C(1+t)^{-\f52},
\end{split}
\eeno
which, together with the definition of term $N_2$ and the estimate in Lemma \ref{tt2}, yields directly
\beq\label{kF1}
\begin{split}
	N_2
	\leq&C \int_{\f t2}^t(1+t-\tau)^{-\f54}\big(\delta\|\na^3 (n,v,q)\|_{L^2}+ (1+\tau)^{-\f52}\big)d\tau\\
	\leq&C\delta\sup_{0\leq\tau\leq t}\|\na^3 (n,v,q)\|_{L^2}\int_{\f t2}^t(1+t-\tau)^{-\f54}d\tau+C(1+t)^{-\f52}\\
	\leq&C\delta\sup_{0\leq\tau\leq t}\|\na^3 (n,v,q)\|_{L^2}+C(1+t)^{-\f52}.
\end{split}\eeq
Substituting \eqref{F1} and \eqref{kF1} into \eqref{nvl}, one arrives at
\beq\label{Unon}
    \int_0^t\|\na^{3}[S(t-\tau)F(U(\tau))]^l\|_{L^2}d\tau
    \leq C\delta\sup_{0\leq\tau\leq t}\|\na^3 (n,v,q)\|_{L^2}+C(1+t)^{-\f94}.
\eeq
Inserting \eqref{U0es} and \eqref{Unon} into \eqref{nvexpress}, one obtains immediately that
\beno
	\|\na^3(n^l,v^l,q^l)\|_{L^2}\leq C \delta \sup_{0\leq s\leq t}\|\na^3(n,v,q)\|_{L^2}+C(1+t)^{-\f94}.
\eeno
Thus, we finish the proof of this lemma.
\end{proof}

Finally, we aim to establish optimal decay rate for the third order spatial derivative of the solution.
\begin{lemm}\label{optimaln}
	Under the assumption of Theorem \ref{them3}, it holds that
	\beq\label{n1h2}
	\|\na^3(n,v,q)(t)\|_{L^2}\leq C (1+t)^{-\f94},
	\eeq
	where $C$ is a positive constant independent of time.
\end{lemm}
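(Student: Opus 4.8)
The plan is to run a time-weighted bootstrap for the single functional
\[
\mathcal{H}(t):=\|\na^{3}(n,v,q)\|_{L^2}^2-\eta_2\int_{|\xi|\geq\eta}\widehat{\na^{2}v}\cdot\overline{\widehat{\na^{3}n}}\,d\xi ,
\]
which for $\eta_2$ small is equivalent to $\|\na^{3}(n,v,q)\|_{L^2}^2$. The two engines are the high-frequency inequality \eqref{en6} of Lemma~\ref{highfrequency} and the low-frequency Duhamel bound \eqref{lowfre} of Lemma~\ref{lowfrequency}. First I would close \eqref{en6} into a genuine ODE for $\mathcal{H}$: since the dissipation $\|\na^{3}(v^h,q^h)\|_{L^2}^2+\eta_2\|\na^{3}n^h\|_{L^2}^2$ controls $\|\na^{3}(n^h,v^h,q^h)\|_{L^2}^2=\|\na^{3}(n,v,q)\|_{L^2}^2-\|\na^{3}(n^l,v^l,q^l)\|_{L^2}^2$, adding a small multiple of $\|\na^{3}(n^l,v^l,q^l)\|_{L^2}^2$ to both sides of \eqref{en6} upgrades the left-hand side to $c_5\mathcal{H}(t)$, giving
\[
\f{d}{dt}\mathcal{H}(t)+c_5\mathcal{H}(t)\leq C\|\na^{3}(n^l,v^l,q^l)\|_{L^2}^2+C(1+t)^{-6}.
\]

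Next I would integrate by Gronwall, so that
\[
\mathcal{H}(t)\leq e^{-c_5 t}\mathcal{H}(0)+C\int_0^t e^{-c_5(t-\tau)}\big(\|\na^{3}(n^l,v^l,q^l)(\tau)\|_{L^2}^2+(1+\tau)^{-6}\big)d\tau .
\]
Because the kernel $e^{-c_5(t-\tau)}$ decays exponentially (equivalently, by Lemma~\ref{tt2}) it preserves polynomial rates, so once \eqref{lowfre} is fed in, the honestly decaying pieces — the $(1+\tau)^{-\f92}$ coming from squaring \eqref{lowfre} and the forcing $(1+\tau)^{-6}$ — convolve to $C(1+t)^{-\f92}$, while $e^{-c_5 t}\mathcal{H}(0)$ is dominated by the same rate. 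Thus every contribution except the $\delta$-dependent one already respects the target $\|\na^{3}(n,v,q)(t)\|_{L^2}^2\lesssim(1+t)^{-\f92}$.

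The main obstacle is precisely the surviving low-frequency self-interaction $\delta\sup_{0\le s\le t}\|\na^{3}(n,v,q)(s)\|_{L^2}$ in \eqref{lowfre}: as a supremum from $s=0$ it is frozen at its large early-time value and carries no decay, so in the stated form it cannot be absorbed. The resolution is that the proof of \eqref{lowfre} gives more than its statement: the term responsible is $N_2=\int_{\f t2}^t(1+t-\tau)^{-\f54}\||\xi|^{2}\widehat{F}(\tau)\|_{L^\infty}\,d\tau$, integrated only over the late window $[\f t2,t]$, where $\|\na^{3}(n,v,q)(\tau)\|_{L^2}$ enters \emph{pointwise}. Introducing $Y(t):=\sup_{0\le s\le t}(1+s)^{\f94}\|\na^{3}(n,v,q)(s)\|_{L^2}$ — finite for each $t$ by the non-optimal rate $\|\na^{3}(n,v,q)\|_{L^2}\lesssim(1+t)^{-\f74}$ of Lemma~\ref{N-1decay} with $k=2$ — one has $\|\na^{3}(n,v,q)(\tau)\|_{L^2}\le(1+\tau)^{-\f94}Y(t)\le C(1+t)^{-\f94}Y(t)$ on $[\f t2,t]$, so that $N_2\lesssim\delta(1+t)^{-\f94}Y(t)+(1+t)^{-\f52}$. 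Hence \eqref{lowfre} effectively sharpens to $\|\na^{3}(n^l,v^l,q^l)(t)\|_{L^2}\lesssim(1+t)^{-\f94}\big(1+\delta Y(t)\big)$, every term now carrying the decay. Substituting this into the Gronwall bound and using rate preservation gives $\|\na^{3}(n,v,q)(t)\|_{L^2}^2\lesssim(1+t)^{-\f92}\big(1+\delta^2 Y(t)^2\big)$, i.e. $(1+t)^{\f94}\|\na^{3}(n,v,q)(t)\|_{L^2}\le C+C\delta Y(t)$; taking the supremum over $[0,t]$ yields $Y(t)\le C+C\delta Y(t)$, and the smallness of $\delta$ absorbs the last term to give $Y(t)\le C$, which is exactly \eqref{n1h2}. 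The delicate point, and the whole reason the top-order rate is harder than the lower-order ones, is this reshaping of the non-decaying low-frequency term $\delta\sup\|\na^{3}(n,v,q)\|_{L^2}$ into the absorbable quantity $\delta(1+t)^{-\f94}Y(t)$ via the late-time window and the weighted supremum.
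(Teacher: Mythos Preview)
Your argument follows the same scaffold as the paper's proof: form the ODE $\frac{d}{dt}\mathcal{H}+c\,\mathcal{H}\le C\|\na^{3}(n^l,v^l,q^l)\|_{L^2}^2+C(1+t)^{-6}$ from \eqref{en6}, integrate by Gronwall, and insert the low-frequency bound \eqref{lowfre}. The paper then simply moves $C\delta\sup_{0\le s\le t}\|\na^3(n,v,q)(s)\|_{L^2}^2$ from right to left and records
\[
\sup_{0\le\tau\le t}\|\na^3(n,v,q)(\tau)\|_{L^2}^2\le Ce^{-t}\|\na^3(n_0,v_0,q_0)\|_{L^2}^2+C\delta\sup_{0\le\tau\le t}\|\na^3(n,v,q)\|_{L^2}^2+C(1+t)^{-9/2},
\]
which is precisely the step you flag as the ``main obstacle.'' Your concern is justified: passing to the supremum over $[0,t]$ on the left forces the same on the right, and then $e^{-t}$ and $(1+t)^{-9/2}$ collapse to $O(1)$, so the paper's written line does not close. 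Your repair --- working with the weighted supremum $Y(t)=\sup_{0\le s\le t}(1+s)^{9/4}\|\na^3(n,v,q)(s)\|_{L^2}$ and reopening the late-window integral $N_2$ inside the proof of Lemma~\ref{lowfrequency} so that the self-interaction term already carries the factor $(1+t)^{-9/4}$ --- is the standard and rigorous way to close this bootstrap. In short, your proof is not merely a restatement of the paper's; at this final absorption step it is more careful and actually fills a gap in the published argument.
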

\begin{proof}
We first rewrite the estimate \eqref{en6} in Lemma \ref{highfrequency} as\beq\label{ddt}
	\f{d}{dt}\widetilde{\mathcal E}^3(t)+\|\na^{3}(v^h,q^h)\|_{L^2}^2+\eta_2\|\na^{3}n^h\|_{L^2}^2
	\leq  C_2\|\na^{3}(n,v,q)^l\|_{L^2}^2+C(1+t)^{-6}.
\eeq
Here the energy $\widetilde{\mathcal E}^3(t)$ is defined by
\[\widetilde{\mathcal E}^3(t)\overset{def}{=}\|\na^{3}(n,v,q)\|_{L^2}^2-\eta_2\int_{|\xi|\geq\eta}\widehat{\na^{2}v}\cdot \overline{\widehat{\na^{3}n}}d\xi.\]
Thanks to Young inequality, by choosing $\eta_2$ small enough, we obtain the following equivalent relation\beq\label{endj}
	c_3\|\na^{3}(n,v)\|_{L^2}^2\leq\widetilde{\mathcal E}^3(t)\leq c_4 \|\na^{3}(n,v)\|_{L^2}^2,
\eeq
where the constants $c_3$ and $c_4$ are independent of time.
We then add on both sides of \eqref{ddt} by $\|\na^{3}(n^l,v^l,q^l)\|_{L^2}^2$ and apply the estimate \eqref{lowfre} in Lemma \ref{lowfrequency}, to discover
\beno
\begin{split}
	\f{d}{dt}\widetilde{\mathcal E}^3(t)+\|\na^{3}(n,v,q)\|_{L^2}^2
	\leq ( C_2+1)\|\na^{3}(n^l,v^l,q^l)\|_{L^2}^2+C(1+t)^{-6}\leq C\delta \sup_{0\leq \tau\leq t}\|\na^3(n,v,q)\|_{L^2}^2+C(1+t)^{-\f92}.
\end{split}
\eeno
In view of the equivalent relation \eqref{endj}, we have\beq\label{en7}
\begin{split}
	\f{d}{dt}\widetilde{\mathcal E}^3(t)+\widetilde{\mathcal E}^3(t)
	\leq C\delta \sup_{0\leq \tau\leq t}\|\na^3(n,v,q)\|_{L^2}^2+C(1+t)^{-\f92}.
\end{split}
\eeq
This, together with Gronwall inequality, gives immediately\beq\label{estimateE}
\begin{split}
	\widetilde{\mathcal E}^3(t)\leq e^{-t} \widetilde{\mathcal E}^3(0)+C\delta\sup_{0\leq \tau\leq t}\|\na^3(n,v,q)\|_{L^2}^2\int_0^te^{\tau-t}d\tau+C\int_0^te^{\tau-t}(1+\tau)^{-\f92}d\tau.
\end{split}
\eeq
By some direct calculations, we can deduce easily
    $$
	\int_0^te^{\tau-t}d\tau\leq C \quad\text{and}\quad
	\int_0^te^{\tau-t}(1+\tau)^{-\f92}d\tau\leq C(1+t)^{-\f92}.$$
The equivalent relation \eqref{endj} and \eqref{estimateE} gives immediately
\beno
\begin{split}
	\sup_{0\leq \tau\leq t}\|\na^3(n,v,q)(\tau)\|_{L^2}^2\leq Ce^{-t}\|\na^3(n_0,v_0,q_0)\|_{L^2}^2+C\delta\sup_{0\leq \tau\leq t}\|\na^3(n,v,q)\|_{L^2}^2+ C(1+t)^{-\f92}.
\end{split}
\eeno
By applying the smallness of $\delta$, we have
\beno
	\sup_{0\leq \tau\leq t}\|\na^3(n,v,q)(\tau)\|_{L^2}^2\leq C(1+t)^{-\f92}.
\eeno
Consequently, this completes the proof of this lemma.
\end{proof}

\underline{\noindent\textbf{The Proof of Theorem  \ref{them3}.}}
Combining the estimate \eqref{n1h1} in Lemma \ref{N-1decay} with
estimate \eqref{n1h2} in Lemma \ref{optimaln}, we then can obtain the
decay rate \eqref{kdecay} in Theorem \ref{them3}.
Therefore, we complete the proof of Theorem \ref{them3}.






\section*{Acknowledgements}

This research was partially supported by NNSF of China (11801586, 11971496, 12026244), Guangzhou Science and technology project of China (202102020769),
National Key Research and Development Program of China (2020YFA0712500).

\phantomsection
\addcontentsline{toc}{section}{\refname}

\end{document}